\title{On cooperative patrolling: optimal trajectories, complexity
  analysis, and approximation algorithms}
\author{Fabio Pasqualetti, Antonio Franchi, and Francesco
  Bullo
  \thanks{This material is based upon work supported in part by NSF
    grant IIS-0904501 and CPS-1035917. The authors thank the reviewers
    for their thoughtful and constructive remarks.}
  \thanks{Fabio Pasqualetti and Francesco Bullo are with the Center for
    Control, Dynamical Systems and Computation, University of California at
    Santa Barbara, {\tt \{fabiopas,bullo\}@engineering.ucsb.edu}}%
  \thanks{Antonio Franchi is with the Department of Human Perception,
    Cognition and Action, Max Plank Institute for Biological Cybernetics,
    {\tt antonio.franchi@tuebingen.mpg.de}}%
}
\def\figCommChain{\centering\includegraphics[width=0.45\columnwidth
]{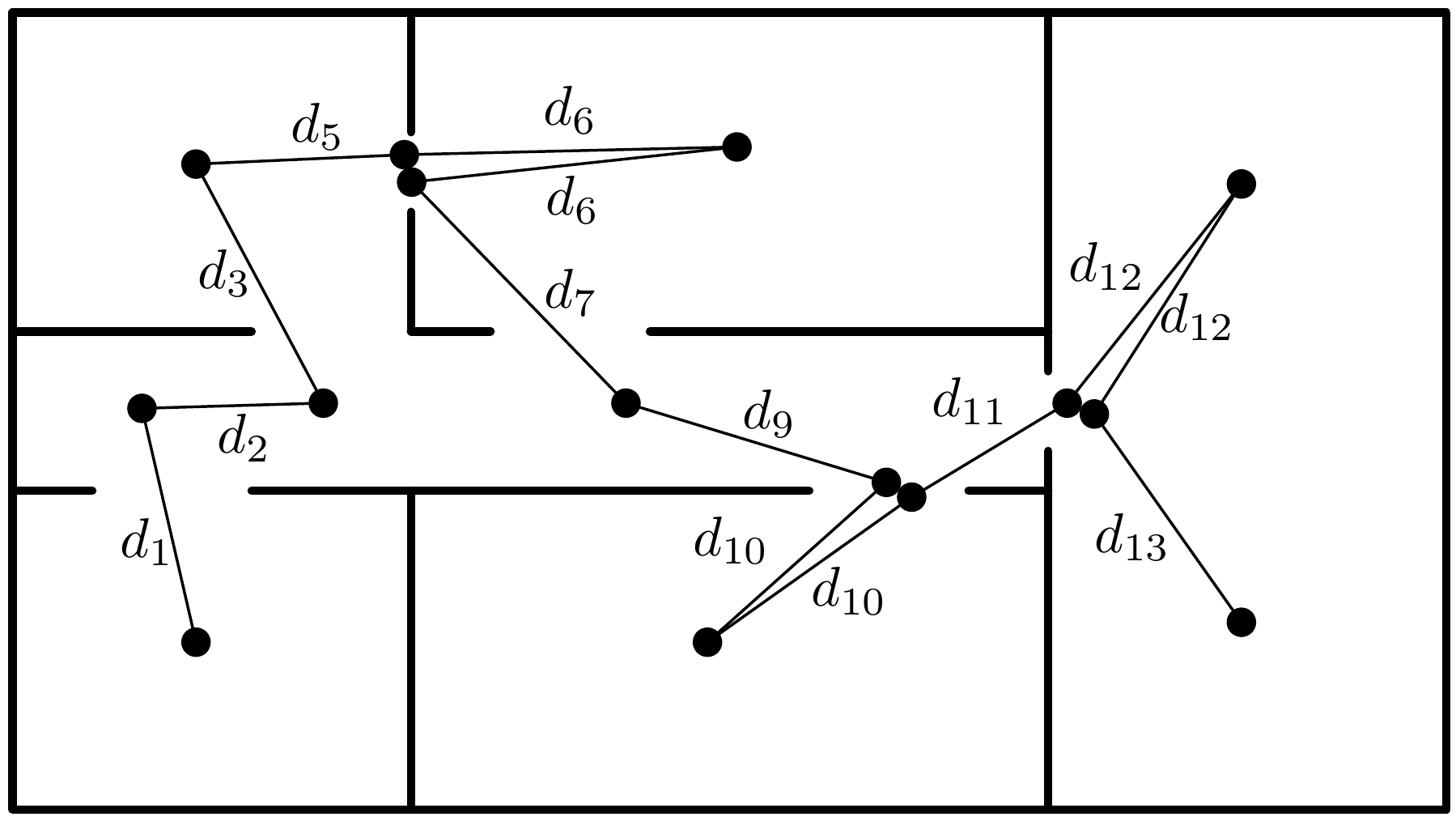}}
\newtheorem{theorem}{Theorem}[section]
\newtheorem{lemma}[theorem]{Lemma}
\newtheorem{proposition}[theorem]{Proposition}
\newtheorem{problem}{Problem}
\newtheorem{remark}{Remark}
\newtheorem{example}{Example}
\newcommand{\setdef}[2]{\{#1 \; | \; #2\}}
\newcommand{\subscr}[2]{{#1}_{\textup{#2}}}
\newcommand{\until}[1]{\{1,\dots,#1\}}
\begin{document}
\maketitle

\begin{abstract}
  The subject of this work is the patrolling of an environment with
  the aid of a team of autonomous agents. We consider both the design
  of open-loop trajectories with optimal properties, and of
  distributed control laws converging to optimal trajectories. As
  performance criteria, the \emph{refresh time} and the \emph{latency}
  are considered, i.e., respectively, time gap between any two visits
  of the same region, and the time necessary to inform every agent
  about an event occurred in the environment. We associate a graph
  with the environment, and we study separately the case of a chain,
  tree, and cyclic graph. For the case of chain graph, we first
  describe a minimum refresh time and latency team trajectory, and we
  propose a polynomial time algorithm for its computation. Then, we
  describe a distributed procedure that steers the robots toward an
  optimal trajectory. For the case of tree graph, a polynomial time
  algorithm is developed for the minimum refresh time problem, under
  the technical assumption of a constant number of robots involved in
  the patrolling task. Finally, we show that the design of a minimum
  refresh time trajectory for a cyclic graph is \emph{NP-hard}, and we
  develop a constant factor approximation algorithm.
\end{abstract}

\section{Introduction}
The recent development in the autonomy and the capabilities of mobile
robots greatly increases the number of applications suitable for a
team of autonomous agents. Particular interest has been received by
those tasks requiring continual execution, such as the monitoring of
oil spills \cite{JC-RF:07}, the detection of forest fires
\cite{DBK-RWB-RSH:08}, the track of border changes
\cite{SS-SM-FB:06f}, and the patrol (surveillance) of an environment
\cite{YE-AS-GAK:08}. The surveillance of an area of interest requires
the robots to continuously and repeatedly travel the environment, and
the challenging problem consists in scheduling the robots trajectories
so as to optimize a certain performance criteria. The reader familiar
with network location, multiple traveling salesman, or graph
exploration problems may observe a close connection with the
patrolling problem we address, e.g., see
\cite{BCT-RLF-TJL:83,TB:06,EMA-RH-AL:06}. It is worth noting, however,
that these classical optimization problems do not capture the
repetitive, and hence dynamic, aspect of the patrolling problem, nor
the synchronization issues that arise when a timing among the visits
of certain zones is required.

A precise formulation of the patrolling problem requires the
characterization of the robots capabilities, of the environment to be
patrolled, and of the performance criteria. In this work, we assume
the robots to be identical and capable of sensing and communicating
within a certain spatial range, and of moving according to a first
order integrator dynamics with bounded speed. We represent the
environment as a graph, in which the vertices correspond to physical
and strategically important locations, and in which the edges denote
the possibility of moving and communicating between locations. We
assume that, when a robot is placed at each of the graph vertices, the
union of the sensor footprints provides complete sensor coverage of
the environment. Regarding the performance criteria of a patrolling
trajectory, we consider (i) the time gap between any two visits of the
same region, called {\it refresh time}, and (ii) the time needed to
inform the team of robots about an event occurred in the environment,
called {\it latency}. Loosely speaking, refresh time and latency
reflect the effectiveness of a patrolling team in detecting events in
the environment and in organizing remedial actions. For both the
refresh time and latency optimization problem, we focus on the worst
case analysis, even though the average refresh time and the average
latency cases are also of interest. Notice that for the latency to be
finite, the motion of the robots needs to be synchronized. For
instance, if two robots are allowed to communicate only when they
simultaneously occupy two adjacent vertices of the graph, then they
need to visit those vertices at the same time in a finite latency
trajectory.

The patrolling problem is receiving increasing attention because of
its fundamental importance in many security applications, e.g., see
\cite{MB-GSS:07,NA-SK-GAK:08,FA-NB-NG:09,AM-LP-GA-FC:09,DA-PO-XH:10}. Although
many solutions have been proposed, the problem of designing minimum
refresh time and latency team trajectories for a general environment
is, to date, an open problem.
Almost all traditional approaches rely on space decomposition, and
traveling salesperson tour computation
\cite{AA-GC-HS-PT-TM-VC-YC:04}. In \cite{AM-GR-JDZ-AD:03} an empirical
evaluation of existing patrolling heuristics is performed. In
\cite{YC:04} two classes of strategies are presented, namely the
cyclic- and the partition-based strategy. In the cyclic-based
strategy, the robots compute a closed route through the viewpoints,
and travel repeatedly such route at maximum speed. Clearly, in the
case of a single robot, if the tour is the shortest possible, then the
cyclic-based strategy performs optimally with respect to the refresh
time and latency criteria. In the partition-based strategy, the
viewpoints are partitioned into $m$ subsets, being $m$ cardinality of
the team, and each robot is responsible for a different set of
viewpoints. To be more precise, each robot computes a closed tour
visiting the viewpoints it is responsible for, and it repeatedly moves
along such tour at maximum speed. Still in \cite{YC:04}, the two
classes of strategies are compared, and it is qualitatively shown that
cyclic-based strategies are to be preferred whenever the ratio of the
longest to the shortest edge of the graph describing the environment
is small, while, otherwise, partition-based policies exhibit better
performance. In \cite{YE-AS-GAK:08} and \cite{DBK-RWB-RSH:08}, an
efficient and distributed solution to the perimeter patrolling problem
for robots with zero communication range is proposed. By means of some
graph partitioning and graph routing techniques, we extend the results
along these directions, e.g., by considering the case of a nonzero
communication range for the perimeter patrolling, and by
characterizing optimal strategies for different environment
topologies. An important variant of the patrolling problem is known as
persistent surveillance, e.g., see \cite{SLS-DR:10}. Differently to
our setup, a dynamically changing environment is considered for the
persistent surveillance problem, and performance guarantees are
offered only under a certain assumption on the rate of change of the
regions to be visited.

It is worth mentioning that a different approach to the design of
patrolling trajectories relies on the use of pebbles or chemical
traces to mark visited regions, e.g., see
\cite{REK:88,GD-MJ-EM-DW:78,MAB-AF-DR-AS-SV:02,IAW-ML-AMB:98}. These
techniques, although effective even without a global representation of
the environment, and with severe communication constraints, do not
explicitly deal with the optimality of the patrolling trajectories,
and they represent therefore a complementary area of research with
respect to this work.

The main contributions of this work are as follows. We introduce and
mathematically formalize the concept of refresh time and latency of a
team trajectory, and we formally state the patrolling optimization
problem. We propose a procedure to build a graph (roadmap) to
represent the topological structure of the area to be patrolled, and
we study separately the case of a chain, tree, and cyclic (not
acyclic) graph. We exhaustively discuss the case of a chain
roadmap. First, we characterize a family of minimum refresh time and
latency team trajectories, which can be computed by optimally
partitioning the chain graph among the robots. Second, we derive a
centralized polynomial time algorithm to compute an optimal partition,
and, ultimately, to design an optimal team trajectory. Our
partitioning procedure is based upon a bisection method, and it is
also amenable to distributed implementation. Third, we develop a
distributed procedure for the robots to converge and synchronize along
an optimal trajectory, so as to minimize the refresh time and latency
criteria. Fourth and finally, we test the robustness of our methods
through a simulation study. When the roadmap has a tree or cyclic
structure, we focus on the refresh time optimization problem, and we
do not consider the latency optimization nor the design of distributed
algorithms. For the case of a tree roadmap, we reduce the minimum
refresh time patrolling problem to a known graph optimization
problem. We show that the computational complexity of the minimum
refresh time patrolling problem is polynomial in the number of
vertices of the roadmap, and, under the assumption of a fixed and
finite number of robots,
we identify a polynomial time centralized algorithm to
compute a minimum refresh time team trajectory. For the case of a
cyclic roadmap, we show that the patrolling problem is an
\emph{NP-hard} optimization problem. We propose two approximation
algorithms, and we characterize their performance. The first
approximate solution is extremely easy to compute, but its performance
depends upon the ratio between the longest and the shortest edge in
the graph representing the environment. The second approximation
algorithm is based on a polynomial time path-covering procedure, and
it allows us to compute a team trajectory whose refresh time is within
a factor of $8$ from the minimum refresh time for the given
environment (cf. Fig. \ref{fig:path_cover} for an example). To the
best of our knowledge, this algorithm is the first constant factor
approximation algorithm for the \textit{NP-hard} minimum refresh time
patrolling problem.

A preliminary version of this work appeared in
\cite{FP-AF-FB:10e}. With respect to the latter manuscripts, in this
current work we introduce and solve the latency optimization problem,
we perform a numerical study to analyze the robustness of our
algorithmic procedures, and we improve the presentation of the results
on the refresh time optimization problem.

The rest of the paper is organized as follows. The notation and the
problem under consideration are in Section \ref{sec:prob_set}, where
we also show that the patrolling problem is, generally,
computationally hard. Section \ref{sec:refr_time}, \ref{sec:latency},
and \ref{sec:distr_alg} contain our results for the patrolling of a
chain environment. We characterize a minimum refresh time and latency
team trajectory, and we derive a centralized and a decentralized
algorithm for its computation. In Section \ref{sec:simulations} we
perform a simulation study to show some robustness and
reconfigurability properties of our distributed procedure. Section
\ref{sec:tree_patrol} contains our results for the patrolling of a
tree environment. We describe a minimum refresh time team trajectory
on a tree roadmap, and we characterize the complexity of computing an
optimal solution. Section \ref{sec:heurist} deals with the general
case of cyclic environment, and it contains our approximation
procedures. Our conclusion and final remarks are in Section
\ref{sec:future_work}.

\section{Robotic model and preliminary concepts}\label{sec:prob_set}
\subsection{Robots on roadmaps with sensor coverage and communication
  connectivity}
We will be using the standard motion planning notation, and we refer
the reader to \cite{SML:06} for a comprehensive treatment of the
subject. We are given a team of $m>0$ identical robots, capable of
sensing, communicating, and moving in a connected environment. We make
the following combined assumptions on the robot capabilities and on
the environment to be patrolled.

Regarding sensing, we assume that the environment can be completely covered
by simultaneously placing a robot at each of a set of $n>m$
\emph{viewpoints} in the configuration space. In other words, if $m=n$
robots were available and placed at the $n$ viewpoints, then the union of
the sensors footprint of the robots would provide complete sensor coverage
of the environment. We assume that each viewpoint is required for complete
sensor coverage. Finally, we assume $n>m$ so that at least one robot needs
to visit more viewpoints for the entire environment to be monitored over
time.

Regarding communication, we associate an undirected graph $G$ with the
environment, whose vertices are the $n$ viewpoints, and in which there is
an edge between two vertices if two robots placed at those viewpoints are
able to communicate to each other. We assume that $G$ is connected.
In what follows we design cooperative patrolling algorithms with sporadic
communication, in the sense that two robots are required to communicate
only when they occupy adjacent vertices. The occurrence of additional
communication links can be easily incorporated into the algorithms and
cannot decrease their performance.

Regarding motion, we assume that the robots are holonomic, i.e., they are
modeled as first order integrators and move at most at unit speed.
Additionally, we turn the graph $G$ into a robotic roadmap~\cite{SML:06}
and a metric weighted graph as follows: to each pair of viewpoints that are
neighbors in $G$, we associate a unique path connecting them. We select
these paths so that the set of path lengths, adopted as edge weights,
verify the triangle inequality. (For example, the shortest paths between
viewpoints constitute a suitable choice.)
%
We assume that each robot remains always on the roadmap.

In summary, the combined assumptions on the robot capabilities and on the
environment are that: the vertices of $G$ provide complete sensor coverage
of the environment and each edge of $G$ corresponds to both a communication
edge and a motion path.  Hence, we refer to $G$ as a roadmap with sensor
coverage and communication connectivity.



\subsection{On suitable roadmaps and robots capabilities}
The problem of constructing a roadmap from an environment is here
discussed.


\begin{example}[Roadmap computation with omnidirectional sensing and
  communication in known environments]
  Assume that the robots are holonomic vehicles moving at bounded speed,
  and equipped with an omnidirectional sensing device, and a line-of-sight
  communication device. If a map of the environment is available, then a
  valid roadmap is obtained by solving an Art Gallery Problem with
  Connectivity \cite{JOR:87}.  A solution to the Art Gallery Problem with
  Connectivity is a set of locations, called guards, with the following two
  properties: each point in the environment is visible by at least one
  guard and the visibility graph of the guards is connected. An example
  roadmap is given in Fig.~\ref{fig:CommGraph}. (A distributed sensor-based
  algorithm for the Art Gallery Problem with Connectivity is given
  in~\cite{KJO-AG-FB:08w}.)
\end{example}

\begin{figure}[]
  \centering
  \includegraphics[width=0.4\columnwidth]{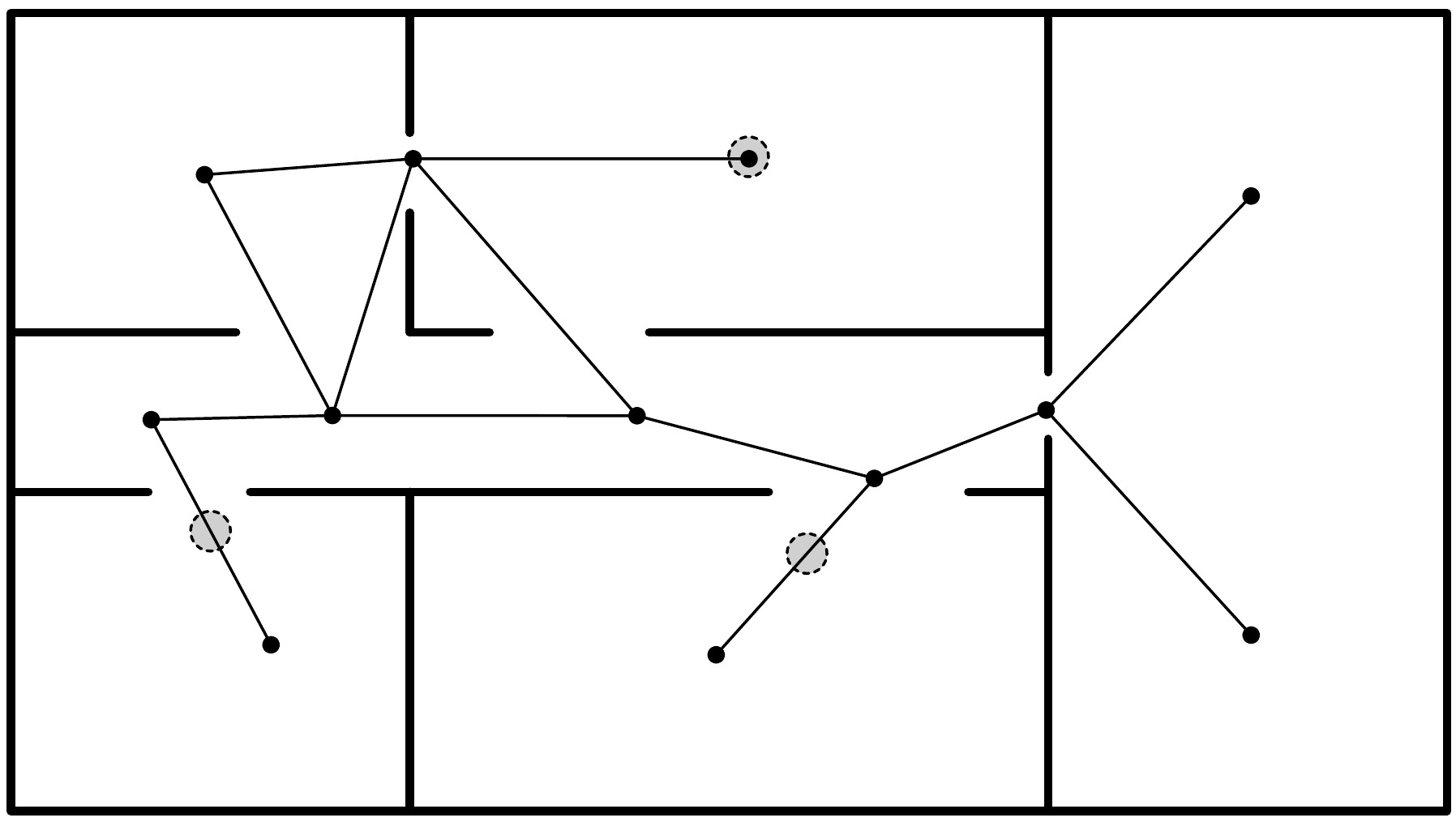}\\
  \caption{A polygonal environment and its associated roadmap. The
    viewpoints are chosen in a way that the environment is completely
    covered by placing a robot at each of the $12$ viewpoints. The edges of
    the roadmap denote the possibility for the $3$ robots of both moving
    and communicating between pair of connected viewpoints. The weight of
    each edge corresponds to its length.}\label{fig:CommGraph}
\end{figure}

\begin{example}[Roadmap  computation  for  general  robots  in  an  unknown
  environment] %
  If the  environment is unknown,  then an exploration algorithm  should be
  used   to  obtain   a  representation   of  the   environment,   see  for
  example~\cite{GD-MJ-EM-DW:78,IAW-ML-AMB:00,AF-LF-GO-MV:09}.    While   or
  after  exploring the  environment, a  robot should  select  viewpoints to
  provide full sensor  coverage.  By construction, the existence  of a path
  between  two viewpoints  is automatically  guaranteed by  the exploration
  algorithm.   Moreover,   if  communication  between   two  path-connected
  viewpoints $(v_i,v_j)$ is not guaranteed, then the graph may be augmented
  with  additional viewpoints along  the path  connecting $v_i$  and $v_j$.
  The  roadmap resulting  from  these steps  features  sensor coverage  and
  communication connectivity.
\end{example}


The following remarks are in order. First, a roadmap representing the
environment is in general not unique, and the patrolling performance
depends upon the particular choice. In this work, we do not address
the problem of choosing the roadmap that yields optimal
performance. Instead, we present efficient algorithms for the
patrolling problem, which can also be used to compare different
roadmaps on the basis of the corresponding patrolling
performance. Second, for the implementation of our algorithms, a robot
does not need to travel exactly along the roadmap. Indeed, a robot
only needs to arrive sufficiently close to a viewpoint, or to be able
to circle around it to provide sensor coverage. A related example is
in Section \ref{sec:simulations}, where the arrival delay of a robot
can be interpreted as the uncertainty in the motion of the
robots. Third, the global knowledge of the roadmap may not be required
for the execution of a patrolling trajectory. Indeed, in general, each
robot only visits a subpart of the roadmap. Fourth and finally,
collisions are prevented by letting two robots exchange their roles
every time they are about to collide. Indeed, since no robot is
assigned to a specific viewpoint, if robot $i$ is about to collide
with robot $j$ at time $T$, then, without affecting the performance of
the team trajectory, collision is avoided by redefining the $i$-th and
$j$-th trajectory as $\bar x_i (t) = x_j (t)$ and $\bar x_j (t) = x_i
(t)$ for $t\ge T$; see the notion of order invariant trajectory
below. Communication or sensing devices can be used to detect and
prevent possible collisions.

\subsection{Refresh time of team trajectories: definitions and a
  preliminary complexity result}
For a team of $m$ robots with specific capabilities, let $G=(V,E)$ be
a robotic roadmap with sensor coverage and communication
connectivity. A \emph{team trajectory} $X$ is an array of $m$
continuous and piecewise-differentiable trajectories $x_1
(t),\dots,x_m (t)$ defined by the motion of the robots on the roadmap
$G$, i.e., $x_i: [0,\subscr{T}{f}] \mapsto G$, for $i \in \until{m}$,
where $\subscr{T}{f} \in \mathbb{R}$ is a time horizon of interest,
much larger than the time required by a single robot to visit all
viewpoints. We say that a viewpoint $v\in V$ is visited at time $t$ by
robot $i$ if $x_i(t)=v$. We define the \emph{refresh time} of a team
trajectory $X$, in short $\text{RT}(X)$, as the longest time interval
between any two consecutive visits of any viewpoint, i.e.,
\begin{align*}
  \text{RT}(X)=\max_{v \in V} \max_{(t_1,t_2) \in \Omega(v,X)}t_2 -t_1
\end{align*}
where $\Omega(v,X) = \setdef{ (t_1,t_2) \in [0,\subscr{T}{f}]^2, t_1
  \le t_2, }{x_i(t)\neq v, \forall i\in\until{m}, t_1 < t < t_2}$.

\begin{remark}[Existence of a minimum]
  We claim that there exists a team trajectory with minimum refresh
  time and prove it as follows. Without loss of generality, we
  restrict our attention to team trajectories in which each robot
  moves at maximum speed along the edges and stops for certain
  durations, possibly equal to zero, at the viewpoints. Thus, a team
  trajectory can be described as a tuple of the form $(S,\Delta) =
  \{(S_1,\Delta_1),\dots,(S_m,\Delta_m)\}$, where $S_i$ contains the
  sequence of viewpoints visited by robot $i$, and $\Delta_i$ contains
  the waiting times at the visited vertices. Notice that the time
  horizon $\subscr{T}{f}$ is finite, the length of each edge is
  positive, the number of vertices is finite, and the speed of the
  robots is bounded. It follows that the length of each sequence $S_i$
  is finite, and, therefore, each $S_i$ takes value in a finite set.
  Now, for each possible sequence of visited vertices, the refresh
  time is a continuous function of only the waiting times, and each
  waiting time lies in the compact interval $[0,\subscr{T}{f}]$.
  Because any continuous function defined over a compact region admits
  a point of minimum value, we conclude that there exists a team
  trajectory with minimum refresh time.
\end{remark}

\begin{problem}[Team refresh time]\label{prob:minRT}
  Given a roadmap and a team of robots, find a minimum refresh time
  team trajectory.
\end{problem}



In Section \ref{sec:latency} we present a different optimization
problem, which deals with the possibility for a robot to communicate,
possibly with multiple hops, with every other robot in the team. We
now conclude this section with our first result on the computational
complexity of the Team refresh time problem. For a detailed discussion
of the main aspects of the computational complexity theory, we refer
the interested reader to \cite{MRG-DSJ:79}. Recall that an
optimization problem is said to be \emph{NP-hard} if it is,
informally, as hard as the hardest problem in the class \emph{NP}, for
which no polynomial time algorithm is known to compute an optimal
solution.


\begin{theorem}[Computational complexity]\label{thm:complexity}
  The \emph{Team refresh time} problem is \textit{NP-hard}.
\end{theorem}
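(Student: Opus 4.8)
The plan is to establish \emph{NP}-hardness by a polynomial-time reduction from the \emph{NP}-complete Hamiltonian cycle problem, already for the special case $m=1$ of a single robot. Given an undirected graph $H=(V_H,E_H)$ with $|V_H|=n\ge 3$, I construct a roadmap $G$ whose $n$ viewpoints are the vertices of $H$, placing an edge between every pair of viewpoints and assigning it weight $1$ if that pair belongs to $E_H$ and weight $2$ otherwise; since all weights lie in $\{1,2\}$ the triangle inequality holds, so $G$ is a valid roadmap (sensor coverage and communication connectivity are granted by construction) and $n>m$. I will argue that $H$ has a Hamiltonian cycle if and only if this single-robot instance admits a team trajectory of refresh time at most $n$. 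As the construction is polynomial, this shows that the decision version of Problem~\ref{prob:minRT} is \emph{NP}-hard, hence so is the optimization problem.

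The reduction hinges on characterizing the single-robot optimum: I claim that, for $m=1$ and a sufficiently long horizon, $\min_X \text{RT}(X)=L^\star$, where $L^\star$ is the length of a shortest closed walk on $G$ visiting every viewpoint. The inequality $\min_X \text{RT}(X)\le L^\star$ is easy: recalling that (as in the existence-of-a-minimum remark) we may restrict to trajectories that traverse edges at unit speed and rest at viewpoints, let the robot repeatedly traverse a shortest closed covering walk at unit speed; then every viewpoint is visited at least once during every time window of length $L^\star$, so the refresh time is at most $L^\star$.

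The reverse inequality $\min_X \text{RT}(X)\ge L^\star$ is the technical core. Take any single-robot trajectory $X=(x_1)$ with refresh time $R$, and — taking the horizon large so that an extremal gap occurs in its interior — pick a viewpoint $v$ and times $t_1<t_2$ realizing a longest gap, so that $x_1(t)\neq v$ for $t\in(t_1,t_2)$, $t_2-t_1=R$, and $x_1(t_1)=x_1(t_2)=v$. Every other viewpoint $u$ must then be visited strictly inside $(t_1,t_2)$: letting $s\le t_1$ be the last visit of $u$ at or before $t_1$, its next visit $s'$ satisfies $t_1<s'\le s+R\le t_1+R=t_2$, while $s'\neq t_1$ and $s'\neq t_2$ because the robot sits at $v\neq u$ at those instants. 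Hence the restriction of $X$ to $[t_1,t_2]$ is a closed walk based at $v$ that visits every viewpoint and has length at most $t_2-t_1=R$, so $L^\star\le R$; since $X$ was arbitrary, $\min_X \text{RT}(X)\ge L^\star$ and equality follows.

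It remains to translate the claim into Hamiltonicity. On the weighted complete graph above, any closed walk visiting all $n$ viewpoints uses at least $n$ edges, each of weight at least $1$, so $L^\star\ge n$ and $L^\star\in\mathbb{Z}$; moreover $L^\star=n$ occurs exactly when such a walk consists of $n$ edges of weight $1$, i.e., is a Hamiltonian cycle of $H$. Combined with $\min_X \text{RT}(X)=L^\star$, the constructed instance has minimum refresh time at most $n$ if and only if $H$ is Hamiltonian, completing the reduction. The step I expect to be most delicate is the lower bound $\min_X \text{RT}(X)\ge L^\star$: beyond the argument above, one must handle the finite-horizon boundary effects so that the longest gap can indeed be chosen interior with both endpoints being visits of the same viewpoint — the underlying message being that, for a single robot, repeatedly following a shortest covering tour is essentially optimal, which is precisely what makes the refresh-time objective at least as hard as metric TSP.
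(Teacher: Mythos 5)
Your argument is correct and takes essentially the same route as the paper's proof: restrict to the single-robot case $m=1$, identify the minimum refresh time with the length of a shortest closed tour through the viewpoints, and conclude \emph{NP}-hardness from the hardness of TSP. The only difference is one of detail rather than of approach — the paper asserts the tour characterization and invokes Euclidean TSP ``by restriction,'' whereas you prove the characterization (including the lower bound via an interior maximal gap) and carry out an explicit reduction from Hamiltonian cycle using a $\{1,2\}$-weighted metric roadmap.
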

\begin{proof}
  This statement can be shown by reduction from the Traveling Salesman
  problem \cite{MRG-DSJ:79}. In fact, if $m = 1$, since the speed of
  the robots is bounded, then a minimum refresh time trajectory
  consists of moving the robot at maximum speed along a shortest
  closed tour visiting the viewpoints. The problem of finding a
  shortest tour through a set of points in the plane, also known as
  Traveling salesman problem, is an \textit{NP-hard} problem
  \cite{MRG-DSJ:79}. Hence, by restriction, the Team refresh time
  problem is also \textit{NP-hard}.
\end{proof}

Following Theorem~\ref{thm:complexity}, the minimum refresh time
optimization problem is generally computationally hard. In this work,
we first identify two roadmap structures for which there exists an
efficient solution to the Team refresh time problem, and then we
describe two approximation algorithms to deal with the general case.


\section{Minimum refresh time team trajectory on a chain
  roadmap}\label{sec:refr_time}
We characterize in this section an optimal refresh time team
trajectory when the roadmap associated with the environment has a
chain structure.

\subsection{Open loop team trajectory characterization}
Let $N_i$ denote the neighbor set of the vertex $i$, and let $|N_i|$
denote the degree of $i$, i.e., the cardinality of the set $N_i$. A
chain roadmap is an undirected, connected, and acyclic roadmap, in
which every vertex has degree two, except for two vertices which have
degree one. Without losing generality, we assume that the $n$ vertices
are ordered in a way that $N_1= \{2\}$, $N_n = \{n-1\}$, and
$N_i=\{i-1,i+1\}$ for each $i\in \{2,\dots,n-1\}$. We define a
relative order of the robots according to their position on the
roadmap. A team trajectory is \emph{order invariant} if the order of
the robots does not change with time, i.e., if $x_i(t) \le x_{i+1}(t)$
for each $i\in\{1,\ldots,m-1\}$ and for every instant $t \in
[0,\subscr{T}{f}]$, where $x_i (t)$ denotes the distance at time $t$
on the roadmap from the first vertex of the chain to the position of
the $i$-th robot.

\begin{proposition}[Order invariant team
  trajectory]\label{prop:eqv_traj}
  Let $X$ be a team trajectory. There exists an order invariant team
  trajectory $\bar X$ such that $\text{RT}(X)=\text{RT}(\bar X)$.
\end{proposition}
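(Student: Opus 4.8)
The plan is to convert an arbitrary team trajectory $X$ into an order-invariant one $\bar X$ by a "swap" argument, exploiting the fact that on a chain roadmap the robots move on a one-dimensional object, so the position $x_i(t)$ is a real number and the robots' positions can be totally ordered at each instant. The key observation is that the refresh time depends only on the \emph{set-valued} function $t \mapsto \{x_1(t),\dots,x_m(t)\}$ (the unordered multiset of occupied locations), not on which robot occupies which location: the predicate "$x_i(t) \neq v$ for all $i$" appearing in the definition of $\Omega(v,X)$ is symmetric under permutations of the robot indices. So it suffices to produce $\bar X$ realizing the same multiset of occupied points at every time $t$, while being order invariant.

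First I would make precise the elementary swap operation. Suppose at some time $T$ two trajectories cross, i.e., the relative order of robot $i$ and robot $i+1$ is about to change. Define new trajectories that coincide with the old ones up to time $T$ and are interchanged afterwards: $\bar x_i(t) = x_{i+1}(t)$, $\bar x_j(t) = x_i(t)$ for $t \ge T$ (exactly as described in the collision-avoidance remark earlier in the paper). Since both robots are at the same location at time $T$ (a crossing on a chain forces equality of positions at the crossing instant, by continuity and the intermediate value theorem), the resulting $\bar x_i, \bar x_j$ remain continuous and piecewise-differentiable, still respect the unit speed bound, and the multiset of occupied points is unchanged for all $t$. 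Hence $\text{RT}$ is preserved by each such swap.

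Next I would argue that finitely many swaps suffice, or more cleanly, bypass the induction entirely: at each time $t$, sort the values $x_1(t),\dots,x_m(t)$ in nondecreasing order and let $\bar x_i(t)$ be the $i$-th smallest. This directly defines an order-invariant candidate with the correct occupied multiset, so $\text{RT}(\bar X) = \text{RT}(X)$ is immediate; the only thing left to verify is that the sorted trajectories are themselves continuous and piecewise-differentiable with speed at most one. Continuity of the order statistics of finitely many continuous functions is standard (the $i$-th smallest of continuous functions is continuous). For the speed bound and piecewise-differentiability I would use the piecewise-differentiability of the $x_i$ together with the fact that on each subinterval where no two of the $x_i$ cross, $\bar x_i$ equals a fixed one of the $x_j$; the crossing times are isolated (or at least form a measure-zero set one can handle), so $\bar X$ is piecewise-differentiable and inherits the unit speed bound.

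The main obstacle is the regularity bookkeeping: showing that the sorted trajectories genuinely stay within the admissible class (continuous, piecewise-differentiable, unit-speed), particularly ensuring the set of crossing times is well-behaved (finite, or at worst a discrete set) so that "piecewise-differentiable" is not violated. This is where one must lean on the structural assumption that each $x_i$ is piecewise-differentiable on $[0,\subscr{T}{f}]$ and moves at maximum speed along edges with only finitely many stops, as set up in the Existence-of-a-minimum remark; with that normalization the trajectories are piecewise-linear in arc length, crossings are finite in number, and the argument closes cleanly.
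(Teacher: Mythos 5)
Your proposal is correct and follows essentially the same route as the paper: the paper defines a permutation matrix $P(t)$ tracking the robots' order and sets $\bar X = P^{-1}(t)X(t)$, which is exactly your ``sort the positions at each instant'' construction, with the same continuity argument (a change of order forces overlapping positions) and the same observation that the refresh time is invariant under permutations of the robot indices. Your extra care about piecewise-differentiability and the discreteness of crossing times is a point the paper's proof passes over silently, but it does not change the argument.
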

\begin{proof}
  Let $X$ be a team trajectory, and consider the permutation matrix
  $P(t)$, that keeps track of the order of the robots at time $t$,
  i.e., such that the $(i,j)$-th entry of $P(t)$ is $1$ if, at time
  $t$, the $i$-th robot occupies the $j$-th position in the chain of
  robots, and it is $0$ otherwise. Since $X$ is continuous, anytime
  the function $P(t)$ is discontinuous, the positions of the robots
  directly involved in the permutation overlap. Therefore, the order
  invariant team trajectory $\bar X=P^{-1}(t) X(t)$ is a feasible team
  trajectory, and it holds $\text{RT}(\bar X)=\text{RT}(X)$.
\end{proof}

Let $V_i\subseteq V$ be the set of viewpoints visited over time by the
agent $i$ with the trajectory $x_i$, and let the \emph{image} of the
team trajectory $X$ be the set $\{V_1,\dots,V_m\}$. Notice that
different team trajectories may have the same image. Throughout the
paper, let $l_i = \min_{v \in V_i} v$, $r_i = \max_{v \in V_i} v$, and
$d_i = r_i - l_i$. Finally, let $\text{RT}^*=\min_X \text{RT}(X)$. A
team trajectory is \textit{non-overlapping} if $V_i \cap V_j
=\emptyset$ for all $i \neq j$.

\begin{proposition}[Non-overlapping team
  trajectory]\label{prop:disjoint_sets}
  Given a chain roadmap, there exists an order invariant and
  non-overlapping team trajectory with refresh time $\text{RT}^*$.
\end{proposition}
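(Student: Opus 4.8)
The plan is to start from the order invariant team trajectory guaranteed by Proposition~\ref{prop:eqv_traj}, and then argue that overlaps can be removed one at a time without increasing the refresh time. So suppose $X$ is order invariant with $\text{RT}(X)=\text{RT}^*$, but its image $\{V_1,\dots,V_m\}$ has some overlapping pair. Because the trajectory is order invariant, for each robot $i$ the set $V_i$ is contained in the sub-chain $[l_i,r_i]$, and order invariance forces $l_1 \le l_2 \le \dots \le l_m$ and $r_1 \le r_2 \le \dots \le r_m$. Hence overlaps can only occur between robots with consecutive indices (or more precisely, the intervals $[l_i,r_i]$ form a ``staircase'' and two of them overlap only if they are adjacent in the nesting order). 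I would pick the largest index $i$ such that $V_i \cap V_{i+1} \neq \emptyset$, equivalently $l_{i+1} \le r_i$, and show how to modify the trajectories of robots $i$ and $i+1$ (and only those two) to eliminate this overlap.

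The key construction: choose a ``cut'' vertex $c$ in the overlap region $[l_{i+1}, r_i]$. During any time interval in which robot $i$ is on the far side $[l_{i+1},c]$ while robot $i+1$ is elsewhere, I reassign those visits so that one of the two robots covers $[l_i,c]$ and the other covers $[c, r_{i+1}]$ from then on; concretely I define new trajectories $\bar x_i, \bar x_{i+1}$ by ``reflecting'' the motion at $c$ — whenever the original $x_i$ would cross $c$ going right, $\bar x_i$ bounces back, and the portion of the motion beyond $c$ is handed to robot $i+1$ (and symmetrically). Because the chain is one-dimensional and the trajectories are continuous, the two robots are necessarily co-located at $c$ at each such handoff time, so the concatenated trajectories $\bar x_i,\bar x_{i+1}$ remain continuous and piecewise differentiable, and the speed bound is preserved since we only ever swap labels or reverse direction. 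Crucially, every viewpoint is still visited at exactly the same set of times as before (only the label of the visiting robot changes), so $\Omega(v,X)$ is unchanged for every $v$, and therefore $\text{RT}(\bar X) = \text{RT}(X) = \text{RT}^*$. The new trajectory is again order invariant after applying Proposition~\ref{prop:eqv_traj} if needed, and now $\bar V_i \subseteq [l_i,c]$, $\bar V_{i+1} \subseteq [c,r_{i+1}]$, so the pair $(i,i+1)$ no longer overlaps; moreover the modification does not create any new overlap with robots $i-1$ or $i+2$ because those robots' viewpoint sets were untouched and still nest correctly around the new $[l_i,c]$ and $[c,r_{i+1}]$.

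Iterating this procedure over the (finitely many) overlapping consecutive pairs — always working on the highest remaining index, so that fixing a pair never reintroduces an overlap at a higher index — terminates in a team trajectory that is order invariant and non-overlapping, with the same refresh time $\text{RT}^*$. That gives the claim.

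The main obstacle I expect is making the ``reflect and hand off at $c$'' surgery fully rigorous: one has to handle the case where, during an overlap interval, \emph{both} robots are simultaneously inside $[l_{i+1},r_i]$ (so the handoff vertex $c$ must be chosen, or allowed to vary in time, so that the two robots are always separated by $c$), and one must check that the waiting/stopping behavior at vertices is preserved and that no robot is asked to be in two places at once. A clean way to sidestep the delicate bookkeeping is to invoke the reduction in the preceding Remark: represent $X$ as a tuple $(S,\Delta)$ of visit sequences and waiting times, and then argue combinatorially that whenever $V_i$ and $V_{i+1}$ overlap there is a time at which robots $i$ and $i+1$ coincide at some vertex $v$ in the overlap, at which point one simply swaps the two robots' future visit sequences — an operation that manifestly preserves all visit times and hence $\text{RT}$ — repeating until the ``rightmost reach'' $r_i$ of robot $i$ is strictly less than the ``leftmost reach'' $l_{i+1}$ of robot $i+1$ for every $i$.
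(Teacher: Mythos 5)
Your overall strategy---start from the order invariant trajectory of Proposition~\ref{prop:eqv_traj} and eliminate overlaps one pair at a time---resembles the paper's setup, but the mechanism you propose for eliminating an overlap does not work, and the gap is not merely bookkeeping. The ``reflect at $c$ and hand off'' surgery requires robots $i$ and $i+1$ to be co-located at $c$ at every handoff instant; order invariance only gives $x_{i+1}(t)\ge x_i(t)$, so when $x_i$ crosses $c$ moving right, robot $i+1$ may be strictly to the right of $c$ and cannot absorb the excursion of $x_i$ beyond $c$ without a discontinuity. More fundamentally, any transformation of continuous trajectories on a chain that leaves every viewpoint's set of visit times unchanged is necessarily a relabeling of robots at instants when they coincide, and such relabelings cannot remove overlaps in general. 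Your fallback claim---that $V_i\cap V_{i+1}\neq\emptyset$ forces robots $i$ and $i+1$ to coincide at some time---is false: let robot $i$ sweep $[0,2]$ and robot $i+1$ sweep $[1,3]$ as unit-speed triangle waves in phase, so that $x_{i+1}(t)=x_i(t)+1$ for all $t$. Any viewpoints lying in $[1,2]$ belong to both images, yet the two robots never meet, so no visit-time-preserving de-overlapping of this trajectory exists at all.

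What is missing is a quantitative step that lets you abandon the original visit times entirely. The paper defines the disjoint sets $\bar V_1=V_1$ and $\bar V_i=V_i\setminus\bigcup_{j<i}V_j$, observes that every viewpoint of $\bar V_i$ is visited only by robots $j\ge i$ (all of which satisfy $x_j(t)\ge x_i(t)$ by order invariance), and deduces the lower bound $\text{RT}(X)\ge 2\max_i \bar d_i$ where $\bar d_i = \bar r_i - \bar l_i$. It then discards $X$ and substitutes a brand-new trajectory in which robot $i$ periodically sweeps $[\bar l_i,\bar r_i]$ at maximum speed; this trajectory visits viewpoints at completely different times, but its refresh time is exactly $2\max_i\bar d_i\le\text{RT}(X)=\text{RT}^*$, which is all that is needed. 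Without a lower bound of this type your induction has nothing to compare the modified trajectory against, so the argument cannot be repaired by a more careful choice of the cut vertex $c$ or by appealing to the $(S,\Delta)$ representation.
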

\begin{proof}
  Let $X^*$ be a minimum refresh time team trajectory, and let $X$ be
  the order invariant team trajectory obtained from $X^*$ as in
  Proposition \ref{prop:eqv_traj}. Clearly
  $\text{RT}(X)=\text{RT}^*$. Let $\{V_1,\dots,V_m\}$ be the image of
  $X$, and note that $V=\cup_{i=1}^{m}V_i$. Consider the partition of
  $V$ defined as
  \begin{align*}
    \bar{V}_1&=V_1,\\
    \bar{V}_i&=V_i \setminus \cup_{j=1}^{i-1}V_j, \quad i\in \{2,\dots,m\}.
  \end{align*}
  For every nonempty $\bar V_i$, let $\bar l_i = \min_{v \in \bar V_i}
  v$, $\bar r_i = \max_{v \in \bar V_i} v$, and $\bar d_i = \bar r_i
  -\bar l_i$. Note that, by construction, the viewpoint $\bar l_i$ is
  visited by the robot $i$ and, possibly, by the robots $j>i$. Also,
  because $X$ is order invariant, we have $x_i(t)\le x_j(t)$. It
  follows that $\text{RT}(X) \ge 2\max_i \bar d_i$. Consider now the
  team trajectory $\bar X$ with image $\{\bar V_1, \dots, \bar V_m\}$,
  and assume that the robots sweep periodically at maximum speed their
  segment. Then $\text{RT}(\bar X) = 2\max_i \bar d_i$, so that $\bar
  X$ is an order invariant and non-overlapping team trajectory with
  minimum refresh time.
\end{proof}

Given a chain graph on the viewpoints $V$, let
$\Pi_m=\{\pi_1,\dots,\pi_m\}$ be an \textit{$m$-partition} of $V$,
i.e., $\pi_1,\dots,\pi_m$ is a collection of subsets of $V$ such that
$\pi_i \cap \pi_j = \emptyset$ whenever $i \neq j$, and $V =
\bigcup_{i=1}^m \pi_i$. Additionally, let the dimension of the
partition $\Pi_m$ equal the longest distance between any two
viewpoints in the same cluster, i.e., $\operatorname{dim}(\Pi_m) =
\max_{i\in \{1,\dots,m\}} \left( \max_{v \in \pi_i} v - \min_{v \in
    \pi_i} v \right)$, where $\max_{v \in \pi_i} v - \min_{v \in
  \pi_i} v = 0$ if $\pi_i = \emptyset$. Following Proposition
\ref{prop:disjoint_sets}, there exists a minimum refresh time team
trajectory whose image coincide with an $m$-partition of $V$. We now
show that the minimum refresh time equals twice the dimension of an
optimal $m$-partition.

\begin{theorem}[Minimum refresh time]\label{thm:min_refre_time}
  Let $G$ be a chain roadmap, and let $m$ be the number of
  robots. Then $\text{RT}^*=2 \min_{\Pi_m} \operatorname{dim}(\Pi_m)$.
\end{theorem}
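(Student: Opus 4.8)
The plan is to prove the two inequalities $\text{RT}^{\star}\ge 2\min_{\Pi_m}\operatorname{dim}(\Pi_m)$ and $\text{RT}^{\star}\le 2\min_{\Pi_m}\operatorname{dim}(\Pi_m)$ separately, in both cases exploiting the correspondence between non-overlapping team trajectories and $m$-partitions of $V$ established in Proposition \ref{prop:disjoint_sets}.

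For the lower bound I would start from a minimum refresh time team trajectory $X^{\star}$, pass via Propositions \ref{prop:eqv_traj} and \ref{prop:disjoint_sets} to the order invariant trajectory $X$ with image $\{V_1,\dots,V_m\}$, and consider the induced $m$-partition $\bar\Pi_m=\{\bar V_1,\dots,\bar V_m\}$ with $\bar V_1=V_1$ and $\bar V_i=V_i\setminus\bigcup_{j<i}V_j$. The key structural fact, already used inside the proof of Proposition \ref{prop:disjoint_sets}, is that the extreme viewpoints $\bar l_i,\bar r_i$ of cluster $\bar V_i$ can be visited only by robots of index $\ge i$, so that order invariance forces any such robot to perform a round trip over a segment of length $\bar d_i=\bar r_i-\bar l_i$ between consecutive visits of $\bar l_i$; this yields $\text{RT}(X)\ge 2\max_i\bar d_i=2\operatorname{dim}(\bar\Pi_m)$. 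Since $\bar\Pi_m$ is one particular $m$-partition of $V$, we have $\operatorname{dim}(\bar\Pi_m)\ge\min_{\Pi_m}\operatorname{dim}(\Pi_m)$, and therefore $\text{RT}^{\star}=\text{RT}(X)\ge 2\min_{\Pi_m}\operatorname{dim}(\Pi_m)$.

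For the upper bound I would exhibit a team trajectory attaining $2\min_{\Pi_m}\operatorname{dim}(\Pi_m)$. Let $\Pi_m^{\star}=\{\pi_1^{\star},\dots,\pi_m^{\star}\}$ attain $\min_{\Pi_m}\operatorname{dim}(\Pi_m)$, assign robot $i$ to the segment $[\min_{v\in\pi_i^{\star}}v,\ \max_{v\in\pi_i^{\star}}v]$ of length $d_i^{\star}\le\operatorname{dim}(\Pi_m^{\star})$, and let it sweep this segment periodically at unit speed (robots with empty or singleton cluster simply stand still). A direct computation shows that every point of such a segment is revisited within every time window of length $2d_i^{\star}$, the bound being tight only at the two endpoints; hence the resulting trajectory $\bar X$ satisfies $\text{RT}(\bar X)=2\max_i d_i^{\star}=2\operatorname{dim}(\Pi_m^{\star})$, so that $\text{RT}^{\star}\le 2\min_{\Pi_m}\operatorname{dim}(\Pi_m)$. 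Combining the two bounds proves the claim.

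The step I expect to demand the most care is the lower bound, i.e. the rigorous version of ``$\text{RT}(X)\ge 2\max_i\bar d_i$'': one must argue, using continuity of the trajectories together with order invariance, that between a visit of $\bar r_i$ by robot $i$ and the nearest visits of $\bar l_i$ (necessarily performed by robots of index $\ge i$, which lie to the right of robot $i$), a full round trip of length $2\bar d_i$ must elapse, and that the finite but sufficiently long horizon $T_{\textup{f}}$ guarantees the relevant visits actually occur. Since this is essentially the content of the proof of Proposition \ref{prop:disjoint_sets}, the remaining work reduces to the short bookkeeping that an arbitrary $m$-partition has dimension no smaller than the minimum, and that the periodic sweeping trajectory meets this value.
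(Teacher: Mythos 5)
Your proposal is correct and follows essentially the same route as the paper: reduce to an order-invariant, non-overlapping trajectory via Propositions \ref{prop:eqv_traj} and \ref{prop:disjoint_sets}, obtain the lower bound $\text{RT}^{*}\ge 2\operatorname{dim}(\bar\Pi_m)\ge 2\min_{\Pi_m}\operatorname{dim}(\Pi_m)$ from the round-trip argument, and match it with the periodic sweeping trajectory. The only difference is presentational---you split the equality into two explicit inequalities and sweep an optimal partition rather than the partition induced by an optimal trajectory, which if anything makes explicit the minimization step that the paper's last line leaves implicit.
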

\begin{proof}
  As a consequence of Propositions \ref{prop:eqv_traj} and
  \ref{prop:disjoint_sets}, there exists a minimum refresh time team
  trajectory whose image coincides with an $m$-partition
  $\Pi_m$. Since each robot is assigned a different cluster, and the
  speed of the robots is bounded by $1$, we have $\text{RT}^* \ge 2
  \text{dim}(\Pi_m)$. Consider a team trajectory $X$ in which each
  robot continuously sweeps at maximum speed the cluster it is
  assigned to. Clearly, $\text{RT}(X) = \text{RT}^*=2
  \text{dim}(\Pi_m)$.
\end{proof}

\IncMargin{.3em}
\begin{algorithm}[t]
  {\footnotesize
   \SetKwInOut{Input}{Input}
   \SetKwInOut{Set}{Set}
   \SetKwInOut{Title}{Algorithm}
   \SetKwInOut{Require}{Require}
   \Input{$l_i := \min_{v \in V_i} v$, $r_i := \max_{v \in V_i} v$,
     $d_i :=r_i-l_i$\;}
   \Require{an optimal partition of the chain graph\;}
   
   \BlankLine
  
    \nl$x_i(t):=l_i$ for $t:=0,2d_i,4d_i,\dots$\;
    \nl$x_i(t):=r_i$ for $t:=d_i,3d_i,5d_i,\dots$\;

  \caption{\textit{Minimum refresh time
      trajectory on a chain roadmap ($i$-th robot)}}
  \label{algo:max_sweep}}
\end{algorithm} 
\DecMargin{.3em}

We have shown that a minimum refresh time trajectory consists of
letting the robot sweep at maximum speed a part of the chain
graph. Such a trajectory is more formally described for the $i$-th
robot in Trajectory \ref{algo:max_sweep}, where we only characterize
the instants of time at which robot $i$ changes its velocity vector,
and we assume that it moves at maximum speed otherwise.

\begin{remark}[Average partition]
  By removing the longest edges in the chain the average length of the
  clusters is minimized. In general, such partition does not minimize
  the dimension of the $m$-partition, and hence it is not optimal in
  our sense. An example is in Fig. \ref{fig:average}.
  \begin{figure}
    \centering
    \subfigure[]{
          \includegraphics[width=.55\columnwidth]{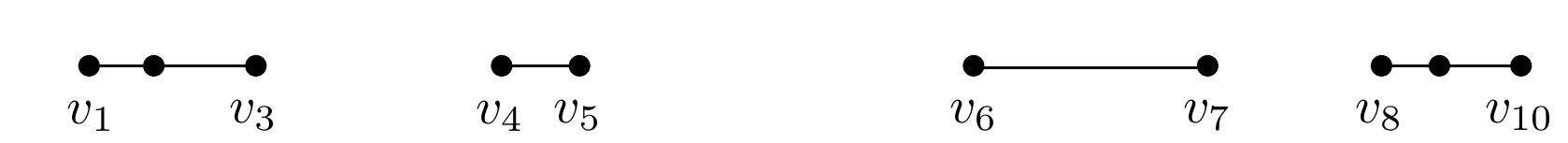}
          \label{fig:OptClustb}}\\
        \subfigure[]{
          \includegraphics[width=.55\columnwidth]{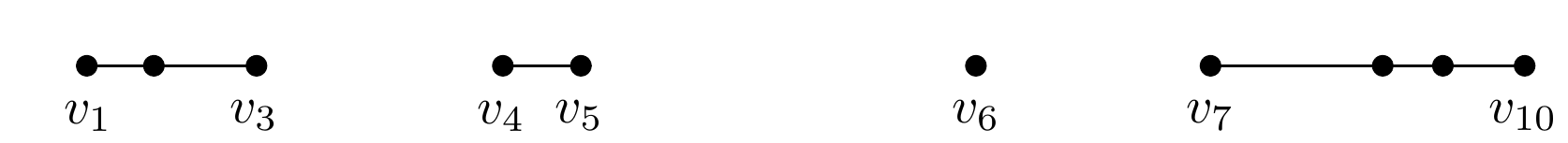}
          \label{fig:OptClustc}}\\
        \caption{A $4$-partition with minimum (maximum) dimension is
          in Fig. \ref{fig:OptClustb}. The dimension of this partition
          is $v_7-v_6$. A $4$-partition with minimum (average)
          dimension is obtained by removing $3$ longest edges and it
          is reported in Fig. \ref{fig:OptClustc}. The dimension of
          this partition is $v_{10}-v_7>v_7-v_6$.}
    \label{fig:average}
  \end{figure}
\end{remark}

\subsection{Optimal $m$-partition centralized computation}
\begin{figure}
	\centering
        \subfigure[]{
          \includegraphics[width=.55\columnwidth]{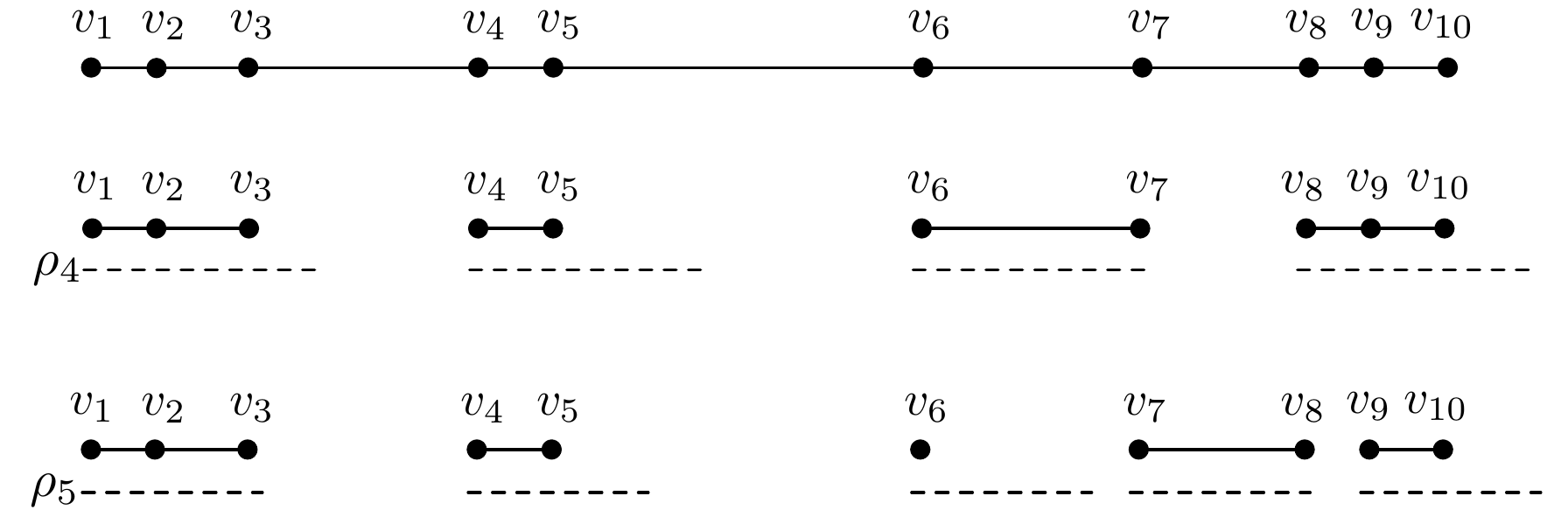}
          \label{fig:LeftInduc}}
        \subfigure[]{
          \includegraphics[width=.55\columnwidth]{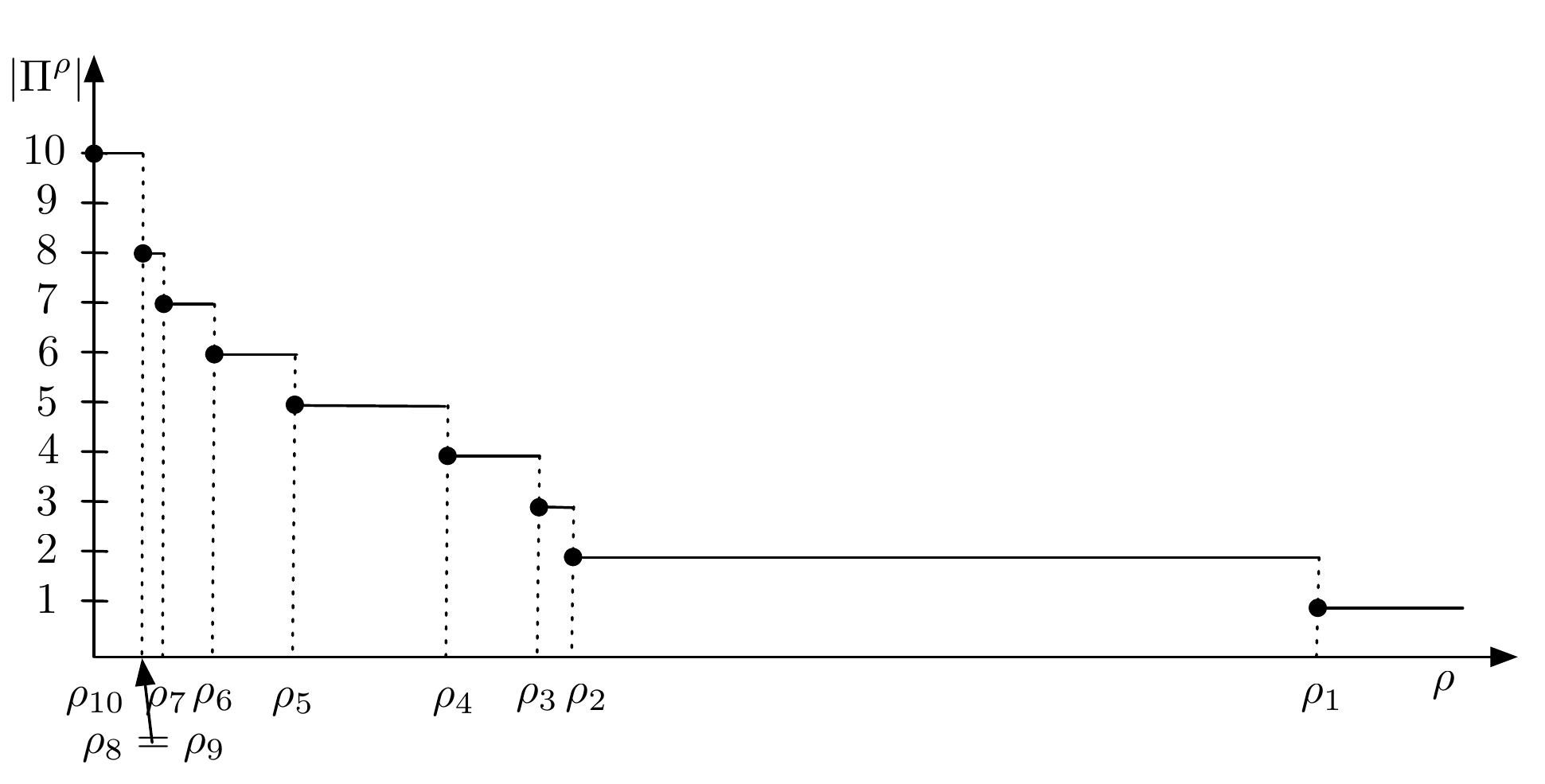}
          \label{fig:LeftInducCard}}
        \caption{In \ref{fig:LeftInduc} the left-induced partition of
          length $\rho_4$ and $\rho_5$, with $\rho_5< \rho_4$, for the
          chain roadmap with vertices $\{v_1,\dots,v_{10}\}$. The
          cardinalities are $|\Pi^{\rho_4}|=4$ and $|\Pi^{\rho_5}|=5$,
          respectively. In \ref{fig:LeftInducCard} the cardinality
          $|\Pi^\rho|$ is plotted as a function of the length
          $\rho$. Notice that, because $v_2-v_1=v_{10}-v_9$, the
          function $|\Pi^\rho|$ does not assume the value $9$.}
\end{figure}

In the remaining part of the section we describe an algorithm to
compute an optimal $m$-partition. For a set of viewpoints $V$, we call
\emph{left-induced} partition of length $\rho \in \mathbb{R}_{\ge 0}$
the partition $\Pi^\rho=\{\pi_i\}$ defined recursively as
(cf. Fig. \ref{fig:LeftInduc})
\begin{align}\label{eq:left-induced}
  \pi_i = \setdef{v\in V}{a_i\le v \le a_i+\rho},\;\; i = 1, \dots \,,
\end{align}
where
\begin{align*}
  a_1&=v_1,\\
  a_j&=\min \setdef{v\in V}{v>a_{j-1}+\rho}, \;\; j = 2, \dots \,.
\end{align*}
The cardinality $| \Pi^\rho |$ corresponds to the integer $j$ such
that \mbox{$\setdef{v \in V }{ v > a_j +\rho} =\emptyset$}.
Observe that the function $\rho \mapsto |\Pi^\rho|$ is monotone,
non-increasing, and right-continuous
(cf. Fig. \ref{fig:LeftInducCard}). Let $\{\rho_1,\dots,\rho_{n-1}\}
\in \mathbb{R}_{\ge 0}^{n-1}$ be the discontinuity points of the
function $\rho \mapsto |\Pi^\rho|$, then, for $k\in\{1,\ldots,n-1\}$,
\begin{align}
  \label{eq:disc_points}
  \begin{split} 
    |\Pi^\rho| \leq k,\quad \text{if } \rho \geq \rho_k,\\
    |\Pi^\rho| > k, \quad \text{if } \rho < \rho_k.
  \end{split}
\end{align} 
Note that two or more discontinuity points of $|\Pi^\rho|$ may
coincide, so that the function $|\Pi^\rho|$ may not assume all the
values in the set $\{1,\dots,n\}$, e.g., in
Fig. \ref{fig:LeftInducCard} the value $|\Pi^\rho|$ is never equal to
$9$.

\SetAlgorithmName{Algorithm}{ of Algorithms}{Algorithms}
\IncMargin{.3em}
\begin{algorithm}[t]
  {\footnotesize
  \SetKwData{Left}{left} 
  \SetKwData{Right}{right} 
  \SetKwData{Position}{position} 
  \SetKwInOut{Input}{Input}
  \SetKwInOut{Output}{Output}
  \SetKwInOut{Set}{Set}  
  \SetKwInOut{Title}{Algorithm}
  \Input{Viewpoints $v_1,\dots,v_n$, Number of robots $m \le n$,
    Tolerance $0 < \varepsilon < \frac{v_n}{m}$\;}
  \Set{$a:=0$, $b:=\frac{2v_n}{m}$,
    $\rho:=\frac{(a+b)}{2}$\;}
  
  \BlankLine

  \nl\While{$(b-a)>2\varepsilon$}{
    
    \nl$\Pi^\rho:=\text{left-induced}(\{v_1,\dots,v_n\},\rho)$\;

    \nl\eIf{$|\Pi^\rho|>m$}{
      \nl$a:=\rho$, $\rho:=\frac{a+b}{2}$\;}{
      \nl$\Pi^* := \Pi^\rho$, $b:=\rho$, $\rho:=\frac{a+b}{2}$\;}
  }
    \nl\Return{$\Pi^*$}
    \caption{Optimal left-induced $m$-partition}
    \label{algo:partition} }
  \end{algorithm}
  \DecMargin{.3em}
  \SetAlgorithmName{Trajectory}{ of Trajectories}{Trajectories}
  
  \begin{theorem}[Optimal $m$-partition]\label{Thm:optimal_partition}
    Let $G$ be a chain roadmap. Let $\Pi_m$ be an $m$-partition of
    $G$, and let $\Pi^\rho$ be the left-induced partition of length $\rho$
    of $G$. Then
    \begin{align*}
      \min_{\Pi_m} \operatorname{dim}(\Pi_m) = \min \setdef{\rho \in
        \mathbb{R}_{\ge 0}}{ | \Pi^\rho |\le m }.
    \end{align*}
 \end{theorem}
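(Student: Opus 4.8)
The plan is to establish the two inequalities
$\min_{\Pi_m}\operatorname{dim}(\Pi_m)\le \min\{\rho\ge 0 : |\Pi^\rho|\le m\}$ and
$\min_{\Pi_m}\operatorname{dim}(\Pi_m)\ge \min\{\rho\ge 0 : |\Pi^\rho|\le m\}$ separately. First I would record that the right-hand side is a genuine minimum: since $\rho\mapsto|\Pi^\rho|$ is integer-valued, non-increasing, and right-continuous, the set $\{\rho\ge 0 : |\Pi^\rho|\le m\}$ is a closed half-line $[\rho^\star,\infty)$ (it is nonempty because $|\Pi^\rho|=1$ for $\rho$ large, and $\rho^\star>0$ because $|\Pi^0|=n>m$); in the notation of \eqref{eq:disc_points} one has $\rho^\star=\rho_m$ and $|\Pi^{\rho^\star}|\le m$.

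For the inequality $\min_{\Pi_m}\operatorname{dim}(\Pi_m)\le\rho^\star$, I would simply exhibit a witness: the left-induced partition $\Pi^{\rho^\star}$ has at most $m$ nonempty clusters, so padding it with empty clusters produces an $m$-partition, and by the defining recursion \eqref{eq:left-induced} each cluster $\pi_i$ of $\Pi^{\rho^\star}$ satisfies $\min_{v\in\pi_i}v=a_i$ and $\max_{v\in\pi_i}v\le a_i+\rho^\star$, hence $\operatorname{dim}(\Pi^{\rho^\star})\le\rho^\star$. This already gives the bound.

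The reverse inequality is the substantive part, and it amounts to the statement that, for any fixed $\rho$, the left-induced partition $\Pi^\rho$ uses the fewest clusters among all partitions of $V$ of dimension at most $\rho$. I would prove this by reducing to a classical interval-covering fact. Given an arbitrary $m$-partition $\Pi_m$ with $\operatorname{dim}(\Pi_m)=d$, each nonempty cluster $\pi_i$ is contained in the closed interval $[l_i,l_i+d]$ of length $d$ (because its diameter is at most $d$), so the at most $m$ intervals $[l_i,l_i+d]$ cover the point set $V$. It then suffices to show that $|\Pi^d|$ equals the minimum number of closed length-$d$ intervals needed to cover $V$: this yields $|\Pi^d|\le m$, i.e. $d\ge\rho^\star$, and taking the minimum over $\Pi_m$ concludes the proof together with Theorem~\ref{thm:min_refre_time}. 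For the covering identity, the intervals $[a_i,a_i+d]$, $i=1,\dots,|\Pi^d|$, cover $V$ since every viewpoint lies in some $\pi_i$; conversely, consecutive left endpoints satisfy $a_{i+1}>a_i+d$ by construction, so no single length-$d$ interval can contain two of the points $a_1,\dots,a_{|\Pi^d|}$, and hence any cover of $V$ — which must cover all these points — uses at least $|\Pi^d|$ intervals. This is the standard optimality of the greedy left-to-right cover.

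The one delicate step is the reduction just described: a cluster of a general $m$-partition need not consist of consecutive viewpoints, so one cannot reason about clusters directly. The resolution — and the only real idea needed — is that $\operatorname{dim}$ depends only on the extreme elements of each cluster, so enlarging every cluster to the length-$d$ interval it spans loses nothing and converts the problem into a pure interval-covering problem, where the greedy construction underlying $\Pi^\rho$ is optimal. Everything else is bookkeeping with \eqref{eq:disc_points} and \eqref{eq:left-induced}.
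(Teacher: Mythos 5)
Your proof is correct, and the easy direction (exhibiting $\Pi^{\rho^\star}$, padded with empty clusters, as a witness $m$-partition of dimension at most $\rho^\star$) coincides with the paper's. For the substantive direction the two arguments genuinely differ. The paper runs a ``greedy stays ahead'' induction directly on the clusters: writing $\Pi_m^*$ for an optimal $m$-partition, it argues $\max\pi_1^\rho\ge\max\pi_1^*$, hence $\min\pi_2^\rho\ge\min\pi_2^*$, and so on down the chain, concluding $|\Pi^\rho|\le m$ for $\rho\ge\rho^*$. That induction tacitly assumes the clusters of $\Pi_m^*$ are blocks of consecutive viewpoints ordered left to right --- a normalization the paper never states or justifies. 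You instead replace each nonempty cluster of an arbitrary $m$-partition of dimension $d$ by the length-$d$ interval it spans, reducing the claim to the optimality of the greedy interval cover, which you certify by noting that the left endpoints of \eqref{eq:left-induced} satisfy $a_{i+1}>a_i+d$, so no single length-$d$ interval can contain two of them. This buys you an argument that applies verbatim to partitions with non-consecutive, arbitrarily ordered clusters, closing the small gap in the paper's induction at the cost of a slightly longer setup; the paper's version is more direct but rests on the unstated normalization. One blemish: the appeal to Theorem~\ref{thm:min_refre_time} in your concluding sentence is spurious --- the identity being proved is purely combinatorial, and your covering argument is already complete without it.
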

 \begin{proof}
    Let $\Pi_m$ be an $m$-partition, and let
    $\Pi^\rho=\{\pi_1^\rho,\dots,\pi_k^\rho\}$ be the left induced
    partition of length $\rho$ of a chain roadmap $G$. Let $\rho^* =
    \min_{\Pi_m} \text{dim}(\Pi_m)$. We want to show that $\rho^*$ is
    one of the discontinuity points of the function $|\Pi^\rho|$,
    i.e., that $\rho^*$ verifies the conditions
    \eqref{eq:disc_points}. 

    By contradiction, if $\rho<\rho^*$ and $|\Pi^\rho| \le m$, then an
    $m$-partition with dimension smaller than the optimal would
    exists. Therefore we have $|\Pi^\rho|>m$ if $\rho<\rho^*$.

    Suppose now that $\rho \geq \rho^*$, and let
    $\Pi_m^*=\{\pi_1^*,\dots,\pi_m^*\}$ be an $m$-partition with
    minimum dimension. Notice that $|\pi_1^\rho| \ge |\pi_1^*|$,
    because the cluster $\pi_1^\rho$ contains all the viewpoints
    within distance $\rho$ from $v_1$, and hence also within distance
    $\rho^*$. It follows that $\max \pi_1^\rho \ge \max \pi_1^*$, and
    also that $\min \pi_2^\rho \ge \min \pi_2^*$. By repeating the
    same procedure to the remaining clusters, we obtain that $\max
    \pi_m^\rho \ge \max \pi_m^*$, so that, if $|\Pi^* |=m$ and
    $\rho\ge \rho^*$, then $|\Pi^\rho|\le m$.
  \end{proof}

  Following Theorem \ref{Thm:optimal_partition}, an optimal
  left-induced partition of cardinality (at most) $m$ is also an
  optimal $m$-partition. Notice that for the computation of an optimal
  left-induced partition only the lengths $\rho$ corresponding to the
  discontinuity points of $\Pi^\rho$ need to be considered. Since each
  discontinuity point coincides with the distance between a pair of
  vertices, only $\frac{n(n-1)}{2}$ values need to be
  tested. Therefore, an optimal left-induced partition can be computed
  with complexity $O (n^2)$. In what follows we describe an
  $\varepsilon$-approximation algorithm with linear complexity for any
  $\varepsilon \in \mathbb{R}_{>0}$. Notice that
  $\varepsilon$-approximation algorithms with linear complexity are
  often more convenient for a practical implementation than exact
  algorithms with higher complexity \cite{EMA-RH-AL:06}.

  We now present our algorithm for the computation of an optimal
  $m$-partition. Since the function $\rho \mapsto |\Pi^\rho|$ is
  monotone and continuous, a bisection method is effective for finding
  its discontinuity points, and, therefore, for determining the
  shortest length of a left-induced partition of cardinality $m$. A
  bisection based procedure to compute an optimal left-induced
  partition is in Algorithm \ref{algo:partition}, where the function
  \textit{left-induced($\{v_1,\dots,v_n\}$,$\rho$)} returns the
  left-induced partition defined in equation
  \eqref{eq:left-induced}. We next characterize the convergence
  properties of Algorithm \ref{algo:partition}.

  \begin{lemma}[Convergence of Algorithm
    \ref{algo:partition}]\label{thm:conv_alg_part}
    Let $G$ be a chain roadmap, and let $\Pi_m$ denote an
    $m$-partition of $G$. Let $\rho^*=\min_{\Pi_m}
    \text{dim}(\Pi_m)$. Algorithm \ref{algo:partition} with tolerance
    $\varepsilon$ returns a left-induced partition of dimension at
    most $\rho^* + \varepsilon$ and cardinality at most $m$. Moreover,
    the time complexity of Algorithm \ref{algo:partition} is
    $O(n\log(\varepsilon^{-1}))$.
\end{lemma}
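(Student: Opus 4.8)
The plan is to read Algorithm~\ref{algo:partition} as a bisection search that brackets the number $\rho^{*}:=\min_{\Pi_m}\operatorname{dim}(\Pi_m)$, its engine being the dichotomy that $|\Pi^{\rho}|>m$ exactly when $\rho<\rho^{*}$; this follows from Theorem~\ref{Thm:optimal_partition} together with the monotonicity of $\rho\mapsto|\Pi^{\rho}|$ recalled just before~\eqref{eq:disc_points}. First I would verify that the initial interval $[a,b]=[0,\,2v_{n}/m]$ brackets $\rho^{*}$. Since the viewpoints are distinct, $\Pi^{0}$ is the collection of the $n$ singletons, so $|\Pi^{0}|=n>m$ and hence $\rho^{*}>0=a$. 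For the right endpoint, iterating the recursion $a_{j}=\min\{v\in V : v>a_{j-1}+\rho\}>a_{j-1}+\rho$ from~\eqref{eq:left-induced} gives $a_{j}>v_{1}+(j-1)\rho$; since the last cluster of $\Pi^{\rho}$ contains $v_{n}$, its first element $a_{k}$, with $k=|\Pi^{\rho}|$, obeys $a_{k}\le v_{n}$, so $(k-1)\rho<v_{n}$. Taking $\rho=2v_{n}/m$ forces $k<1+m/2$, hence $k\le m$, i.e.\ $b\ge\rho^{*}$.

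The core of the argument is then the standard pair of bisection invariants, held for every iteration: $a<\rho^{*}\le b$, and $\Pi^{*}$ equals the left-induced partition of the current value of $b$ (the second invariant presumes the harmless initialization $\Pi^{*}:=\Pi^{b}$ before the loop, which also disposes of the degenerate case where $b$ is never reassigned). At a generic step $\rho=(a+b)/2\in(a,b)$: if $|\Pi^{\rho}|>m$ then $\rho<\rho^{*}$, and the update $a:=\rho$ preserves $a<\rho^{*}$; otherwise $\rho\ge\rho^{*}$, and the update $b:=\rho$, $\Pi^{*}:=\Pi^{\rho}$ preserves $\rho^{*}\le b$ and the bookkeeping invariant. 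In both branches $b-a$ is exactly halved. As $b-a$ starts at $2v_{n}/m$ and the loop runs while $b-a>2\varepsilon$, there are $\lceil\log_{2}(v_{n}/(m\varepsilon))\rceil=O(\log(\varepsilon^{-1}))$ iterations; each one builds a left-induced partition by a single forward sweep over the ordered viewpoints in $O(n)$ time, so the running time is $O(n\log(\varepsilon^{-1}))$.

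It remains to bound the output. On termination $\Pi^{*}=\Pi^{b}$ for the terminal $b$, and since every value ever assigned to $b$---including the initial one---satisfies $|\Pi^{b}|\le m$, the returned partition has cardinality at most $m$. Each of its clusters lies in an interval of length $b$, so $\operatorname{dim}(\Pi^{*})\le b$; combining this with the invariant $\rho^{*}\in(a,b]$ and the halting bound on $b-a$ gives $\operatorname{dim}(\Pi^{*})\le\rho^{*}+\varepsilon$, while $\operatorname{dim}(\Pi^{*})\ge\rho^{*}$ because $\Pi^{*}$, padded with empty clusters, is itself an $m$-partition---so the returned partition is in fact $\varepsilon$-optimal. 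I expect the only genuinely delicate points to be the counting estimate certifying $b=2v_{n}/m$ as a valid upper bracket, and the careful handling of the $\Pi^{*}$ bookkeeping so that the object returned is always well defined; the rest is the textbook bisection analysis applied to the monotone map $\rho\mapsto|\Pi^{\rho}|$.
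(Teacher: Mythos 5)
Your proof follows essentially the same route as the paper's: bracket $\rho^{*}$ in $(0,\,2v_n/m]$, use the dichotomy $|\Pi^{\rho}|>m \Leftrightarrow \rho<\rho^{*}$ from Theorem~\ref{Thm:optimal_partition} and monotonicity to maintain the bisection invariant $a<\rho^{*}\le b$, and charge $O(n)$ per iteration over $O(\log(\varepsilon^{-1}))$ iterations. You are in fact more careful than the paper on two points it leaves implicit: the counting argument certifying $|\Pi^{2v_n/m}|\le m$ (the paper only remarks that $(2v_n/m)m>v_n$), and the observation that $\Pi^{*}$ must be initialized before the loop lest the \emph{else} branch never fire.

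The one place where your written argument does not deliver the stated constant is the final accuracy bound. From the termination condition $b-a\le 2\varepsilon$ and the invariant $a<\rho^{*}\le b$ you obtain $\operatorname{dim}(\Pi^{*})\le b<\rho^{*}+2\varepsilon$, not $\rho^{*}+\varepsilon$: the quantity within $\varepsilon$ of $\rho^{*}$ is the midpoint $(a+b)/2$, whereas the algorithm returns the partition associated with the endpoint $b$. (An explicit instance: $\rho^{*}=0.5$, $a=0.4$, $b=0.9$, $\varepsilon=0.13$ terminates after one more halving with $b=0.65>\rho^{*}+\varepsilon$.) This looseness is inherited from the paper, whose own proof asserts that ``$\rho^{*}$ is computed with precision $\varepsilon$'' while likewise returning $\Pi^{b}$; it is repaired by running the loop until $b-a\le\varepsilon$, which leaves the $O(n\log(\varepsilon^{-1}))$ complexity unchanged. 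Aside from flagging that factor of $2$, your argument is sound and matches the paper's.
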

\begin{proof}
  Algorithm \ref{algo:partition} searches for the minimum length
  $\rho^*$ that generates a left-induced partition of cardinality at
  most $m$. Because of Theorem \ref{Thm:optimal_partition}, the length
  $\rho^*$ coincides with one of the discontinuity points of the
  function $|\Pi^\rho|$, and it holds $\rho^* \in (0,2v_n/m)$. Indeed,
  $\rho^*>0$ because $m<n$, and $\rho^*< 2v_n/m$, because
  $(2v_n/m)m>v_n$. Recall from \eqref{eq:disc_points} that
  $|\Pi^\rho|>m$ for every $\rho<\rho^*$, and that the function $\rho
  \mapsto |\Pi^\rho|$ is monotone. Note that the interval $[a,b]$, as
  updated in Algorithm \ref{algo:partition}, contains the value
  $\rho^*$ at every iteration. The length of the interval $[a,b]$ is
  divided by 2 at each iteration, so that, after $\log_2
  (\frac{2v_n}{\varepsilon m})$, the value $\rho^*$ is computed with
  precision $\varepsilon$. Since the computation of $|\Pi^\rho|$ can
  be performed in $O(n)$ operations, the time complexity of Algorithm
  \ref{algo:partition} is $O(n\log(\varepsilon^{-1}))$.
\end{proof}

As a consequence of Proposition \ref{prop:disjoint_sets} and Theorem
\ref{thm:min_refre_time}, in what follows we only consider team
trajectories whose image coincide with an $m$-partition. Therefore,
for ease of notation, we use the set $\{V_1,\dots,V_m\}$ to denote
both the image set of a team trajectory and an $m$-partition of the
chain graph. We conclude this section with a summary of the presented
results.

\begin{theorem}[Patrolling a chain graph at minimum refresh
  time]\label{thm:summary_chain_rt}
  Let $G$ be a chain graph with $n$ viewpoints and let $m \le n$ be
  the number of robots. Let $\mathcal{V}$ be an optimal $m$-partition
  of $G$ computed by means of Algorithm \ref{algo:partition} with
  tolerance $\varepsilon$. Let $\subscr{d}{max}$ be the dimension of
  $\mathcal{V}$. A team trajectory with image $\mathcal{V}$, and
  minimum refresh time $2 \subscr{d}{max}$ is computed as in
  Trajectory \ref{algo:max_sweep}. Moreover, the time complexity of
  designing such trajectory is $O(n\log(\varepsilon^{-1}))$.
\end{theorem}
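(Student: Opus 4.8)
The plan is to assemble the final theorem directly from the results already established in this section, since Theorem~\ref{thm:summary_chain_rt} is essentially a bookkeeping summary rather than a fresh claim. First I would invoke Propositions~\ref{prop:eqv_traj} and~\ref{prop:disjoint_sets} together with Theorem~\ref{thm:min_refre_time} to justify restricting attention to team trajectories whose image is an $m$-partition of $V$, and to record that $\text{RT}^* = 2\min_{\Pi_m}\operatorname{dim}(\Pi_m)$. Next I would apply Theorem~\ref{Thm:optimal_partition} to identify this optimal dimension with $\min\setdef{\rho\in\mathbb{R}_{\ge0}}{|\Pi^\rho|\le m}$, so that computing an optimal $m$-partition reduces to finding the smallest feasible left-induced length.

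Then I would bring in Lemma~\ref{thm:conv_alg_part}: Algorithm~\ref{algo:partition} with tolerance $\varepsilon$ returns a left-induced partition $\mathcal{V}$ of cardinality at most $m$ and dimension $\subscr{d}{max}\le\rho^*+\varepsilon$, in time $O(n\log(\varepsilon^{-1}))$. If $|\mathcal{V}|<m$ I would note that the remaining robots can be assigned empty (or singleton) clusters without increasing the dimension, so $\mathcal{V}$ genuinely serves as an $m$-partition. Feeding $\mathcal{V}$ into Trajectory~\ref{algo:max_sweep} produces, for each robot $i$, the sweep $x_i(t)$ alternating between $l_i$ and $r_i$ at multiples of $d_i$; by Theorem~\ref{thm:min_refre_time} this trajectory has refresh time exactly $2\subscr{d}{max}$.

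Finally I would tally the complexity: constructing the trajectory from $\mathcal{V}$ is $O(n)$ (one sweep pattern per cluster, clusters partitioning $n$ vertices), which is dominated by the $O(n\log(\varepsilon^{-1}))$ cost of Algorithm~\ref{algo:partition}, giving the stated overall bound. There is no real obstacle here; the only point needing a word of care is that the trajectory returned is $\varepsilon$-optimal rather than exactly optimal — its refresh time $2\subscr{d}{max}$ satisfies $2\subscr{d}{max}\le \text{RT}^* + 2\varepsilon$ — and that the "at most $m$'' cardinality from the lemma is compatible with being a bona fide $m$-partition. I would state the theorem with that understanding and let the proof be a short chain of citations to the four preceding results.
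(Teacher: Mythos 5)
Your proposal is correct and matches the paper's intent exactly: the paper offers no separate proof for Theorem~\ref{thm:summary_chain_rt}, presenting it explicitly as a summary whose justification is the chain Propositions~\ref{prop:eqv_traj}--\ref{prop:disjoint_sets}, Theorems~\ref{thm:min_refre_time} and~\ref{Thm:optimal_partition}, Lemma~\ref{thm:conv_alg_part}, and Trajectory~\ref{algo:max_sweep}, which is precisely what you assemble. Your added caveat that the returned partition is only $\varepsilon$-optimal (so $2\subscr{d}{max}\le \text{RT}^*+2\varepsilon$) is a fair and slightly more careful reading than the paper's own phrasing.
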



\section{Minimum refresh time and latency team trajectory on a chain
  roadmap}\label{sec:latency}
The previous section considers the problem of designing team
trajectories with optimal refresh time on a chain graph. In a
patrolling mission it may be important for the robots to communicate
with each other in order to gather information about the status of the
entire environment. For instance, messages could be sent by a unit to
ask for reinforcement, or to spread an alarm. Informally, we call
\emph{latency} of a team trajectory $X$, in short $\text{LT}(X)$, the
shortest time interval necessary for a message generated by any robot
to reach all the other robots. In other words, given our communication
model, the latency of a team trajectory is a measure of how fast a
message spreads to all robots. In this section we describe team
trajectories with minimum refresh time and latency.

We now give a more formal definition of $\text{LT}(X)$. Recall that,
by assumption, two robots are allowed to communicate when they lie on
two adjacent viewpoints. In a chain roadmap, for a message to reach
every robot in the chain, every pair of adjacent robots needs to
communicate. For $i\in\{2,\dots,m\}$, let $\Phi_i$ denote the union of
the set of times at which the robots $i-1$ and $i$ communicate and
$\{0\}$.
The \emph{up-latency} of $X$, in short $\textup{LT}_{\textup{up}}
(X)$, is the longest time interval between any two consecutive
communications between the robots $1,2$ and
$m-1,m$. Precisely,
\begin{align*}
  \textup{LT}_{\textup{up}} (X) = \max_{t_2 \in \Phi_2} \min_{t_m \in
    \bar\Phi_{m}(t_2)} t_m - t_2,
\end{align*}
where $\bar\Phi_m(t_2) = \setdef{ t_m \in \Phi_m }{ \exists \, t_3 \in
  \Phi_3, \dots, t_{m-1} \in \Phi_{m-1}, t_2 \le t_3 \le \dots \le
  t_m} \cup \{ \subscr{T}{f} \}$. Analogously, we call
\emph{down-latency} the quantity
\begin{align*}
  \textup{LT}_{\textup{down}} (X) = \max_{t_m \in \Phi_m} \min_{t_2
    \in \bar\Phi_{2}(t_m)} t_2 - t_m,
\end{align*}
where $\bar\Phi_2(t_m) = \setdef{t_2 \in \Phi_2}{\exists \, t_3 \in
  \Phi_3, \dots, t_{m-1} \in \Phi_{m-1}, t_2 \ge t_3 \ge \dots \ge
  t_m} \cup \{ \subscr{T}{f} \}$. Finally, we define the latency of a
team trajectory as
\begin{align*}
  \textup{LT}(X) = \max \{ \textup{LT}_{\textup{up}} (X), \textup{LT}_{\textup{down}} (X)\}.
\end{align*} 
Notice that our definitions of latency hold for $m \ge 2$, and that,
if $m = 2$, then we have $\textup{LT}_{\textup{up}} (X) =
\textup{LT}_{\textup{down}} (X) = \textup{LT}(X) = 0$ for every team
trajectory $X$.  We envision that the up- and down-latency performance
criteria should be adopted when it is of interest to report the status
of the monitored area to a base station located at one end of the
chain environment. The latency minimization problem is more
appropriate for fully distributed scenarios. In this section we design
synchronized team trajectories with the following two features. First,
since a minimum refresh time trajectory is determined by an optimal
partition of the chain graph, we aim at finding team trajectories
associated with the same optimal partition.\footnote{We focus on this
  family of trajectories, and we leave the more general optimization
  problem as the subject of future research. However, this family of
  trajectories is believed to contain an (unconstrained) optimal
  solution.} Second, we design synchronized team trajectories with
minimum up-latency (resp. down-latency) or latency.

\subsection{Lower bound and optimal team trajectory for up-latency}
We start by showing a lower bound for $\textup{LT}_{\textup{up}} (X)$ and
$\textup{LT}_{\textup{down}} (X)$. Recall that, for a partition
$\{V_1,\dots,V_m\}$, we have $l_i = \min_{v \in V_i} v$, $r_i = \max_{v \in
  V_i} v$, $d_i = r_i - l_i$, and $d_\textup{max} = \max_{i\in
  \{1,\dots,m\}} d_i$.

\begin{lemma}[Up-latency lower bound]\label{lower_bound}
  Let $G$ be a chain roadmap, and let $\{V_1,\dots,V_m\}$ be an
  $m$-partition of $G$. The latency of a team trajectory with image
  $\{V_1,\dots,V_m\}$ is lower bounded by $\sum_{i=2}^{m-1} d_i$.
\end{lemma}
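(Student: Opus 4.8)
The plan is to reduce the statement to a single geometric fact --- where along the chain two consecutive robots can actually exchange a message --- and then to accumulate the unavoidable travel times along a message‑relaying sequence.

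First I would put the trajectory into a normal form. By Proposition~\ref{prop:eqv_traj} I may assume $X$ is order invariant, with the same image $\{V_1,\dots,V_m\}$. Each $V_i$ is a contiguous block of the chain, since a robot that visits two viewpoints visits every viewpoint between them; the $V_i$ are disjoint and cover $\{v_1,\dots,v_n\}$, so order invariance forces $V_1,\dots,V_m$ to be consecutive blocks ordered left to right, say $V_i=\{v_{p_{i-1}+1},\dots,v_{p_i}\}$ with $r_i=v_{p_i}$, $l_{i+1}=v_{p_i+1}$, and $(r_i,l_{i+1})\in E$. The key observation is then: robots $i$ and $i+1$ can occupy adjacent vertices only when robot $i$ is at $r_i$ and robot $i+1$ is at $l_{i+1}$. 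Indeed, whenever robot $i$ occupies a vertex that vertex lies in $V_i$ and whenever robot $i+1$ occupies a vertex that vertex lies in $V_{i+1}$, and the only pair of vertices, one in $V_i$ and one in $V_{i+1}$, that are adjacent in the chain is $(r_i,l_{i+1})$. In particular, for $i\in\{2,\dots,m-1\}$, whenever robot $i$ communicates with its left neighbor it is at $l_i$, and whenever it communicates with its right neighbor it is at $r_i$.

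Next I would run the accumulation argument. Fix any valid relaying chain $t_2\le t_3\le\dots\le t_m$, where $t_i$ is a genuine communication time between robots $i-1$ and $i$ (i.e.\ $t_i\in\Phi_i$, $t_i>0$). By the observation above robot $i$ is at $l_i$ at time $t_i$ and at $r_i$ at time $t_{i+1}$; since it moves at speed at most $1$ and $t_i\le t_{i+1}$, we get $t_{i+1}-t_i\ge r_i-l_i=d_i$ for every $i\in\{2,\dots,m-1\}$, and summing these telescoping inequalities gives $t_m-t_2\ge\sum_{i=2}^{m-1}d_i$. Now pick any $t_2\in\Phi_2$ at which robots $1$ and $2$ actually communicate with $t_2\le\subscr{T}{f}-\sum_{i=2}^{m-1}d_i$ (such a $t_2$ exists because $\subscr{T}{f}$ is taken much larger than the time a single robot needs to visit all viewpoints, hence much larger than $\sum_{i=2}^{m-1}d_i$). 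For this $t_2$, every $t_m\in\bar\Phi_m(t_2)$ is either a genuine relayed time, to which the inequality above applies, or the fallback value $\subscr{T}{f}$, for which $\subscr{T}{f}-t_2\ge\sum_{i=2}^{m-1}d_i$ by the choice of $t_2$; so $\min_{t_m\in\bar\Phi_m(t_2)}(t_m-t_2)\ge\sum_{i=2}^{m-1}d_i$, whence $\textup{LT}_{\textup{up}}(X)\ge\sum_{i=2}^{m-1}d_i$. Since $\textup{LT}(X)=\max\{\textup{LT}_{\textup{up}}(X),\textup{LT}_{\textup{down}}(X)\}\ge\textup{LT}_{\textup{up}}(X)$, the bound follows.

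I expect the main obstacle to be the geometric localization step --- rigorously ruling out any configuration other than $(r_i,l_{i+1})$ in which robots $i$ and $i+1$ sit on adjacent vertices --- which is exactly where order invariance and the partition structure of the image are needed; once that is in place the remaining bookkeeping is a telescoping sum. A minor point to dispatch carefully is that each $\Phi_i$ artificially contains $0$ and each $\bar\Phi_m(t_2)$ artificially contains $\subscr{T}{f}$, so the argument must select a genuine, not‑too‑late $t_2$ (rather than $t_2=0$, which would trivialize the inner minimum), and this is legitimate for any trajectory with a meaningful up‑latency.
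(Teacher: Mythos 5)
Your proof is correct and follows essentially the same route as the paper's (one-sentence) argument: communication between robots $i$ and $i+1$ can only occur at the adjacent pair $(r_i,l_{i+1})$, so each intermediate robot must traverse its whole cluster between successive relay events, and the bound follows by telescoping the travel times $t_{i+1}-t_i\ge d_i$ against the unit speed limit. Your additional care with the artificial elements $0\in\Phi_i$ and $\subscr{T}{f}\in\bar\Phi_m(t_2)$ is a legitimate refinement that the paper's proof simply glosses over.
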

\begin{proof}
  The proposition follows from the fact that the robots speed is
  bounded by $1$, and that the robots need to travel their segment to
  communicate with the neighboring robots.
\end{proof}

For the up-latency of a team trajectory to equal the lower bound in
Lemma \ref{lower_bound}, each robot $i$ needs to transfer a message
from robot $i-1$ to robot $i+1$ in time $d_i$. In order to do so, each
robot $i$ needs to communicate with its neighbor $i+1$ as soon as $x_i
(t) = r_i$.


\begin{theorem}[Patrolling a chain graph at minimum refresh time and
  minimum up-latency]
  Let $G$ be a chain graph with $n$ viewpoints and let $m \le n$ be
  the number of robots. Let $\mathcal{V}$ be an optimal $m$-partition
  of $G$ computed by means of Algorithm \ref{algo:partition} with
  tolerance $\varepsilon$. Let $\subscr{d}{max}$ be the dimension of
  $\mathcal{V}$, and let $d_i$ be the length of the $i$-th cluster. A
  team trajectory with image $\mathcal{V}$, minimum refresh time $2
  \subscr{d}{max}$, and minimum up-latency $\sum_{j=2}^{m-1} d_j$ is
  computed as in Trajectory \ref{algo:min_base}. Moreover, the time
  complexity of designing such trajectory is
  $O(n\log(\varepsilon^{-1}))$.
\end{theorem}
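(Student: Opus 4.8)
The plan is to exhibit the explicit periodic, synchronized trajectory of Trajectory~\ref{algo:min_base} and verify that it attains, simultaneously, the refresh time of Theorem~\ref{thm:min_refre_time} and the up-latency lower bound of Lemma~\ref{lower_bound}. In this trajectory every robot $i$ sweeps its cluster $[l_i,r_i]$ back and forth at unit speed and dwells at the left endpoint $l_i$ for the slack $2(\subscr{d}{max}-d_i)$, so that all robots share the common period $2\subscr{d}{max}$; the phases are chosen so that robot $i$ departs $l_i$ headed for $r_i$ at the instants $\phi_i+k\,2\subscr{d}{max}$, $k\in\mathbb{Z}_{\ge 0}$, with $\phi_1=0$ and $\phi_i=\phi_{i-1}+d_{i-1}=\sum_{j=1}^{i-1}d_j$.

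For the refresh time, I would observe that a phase shift does not change the inter-visit gap of any viewpoint, so the construction is exactly of the form analyzed in Theorem~\ref{thm:summary_chain_rt}: the longest gap equals $2\subscr{d}{max}$ (attained at every $r_i$), which by Theorem~\ref{thm:min_refre_time} is the minimum refresh time of any trajectory with image $\mathcal{V}$. The image is indeed $\mathcal{V}$, since the $i$-th cluster of the left-induced partition returned by Algorithm~\ref{algo:partition} is precisely the set of viewpoints lying in $[l_i,r_i]$, all of which robot $i$ traverses; moreover $r_{i-1}$ and $l_i$ are consecutive viewpoints, hence adjacent in the chain.

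For the up-latency, the key observation is that, with these phases, robot $i-1$ occupies $r_{i-1}$ exactly at the times $t\equiv\phi_{i-1}+d_{i-1}=\phi_i\pmod{2\subscr{d}{max}}$, and at those same times robot $i$ occupies $l_i$; since $r_{i-1}$ and $l_i$ are adjacent, these are communication times, so $\{\phi_i+k\,2\subscr{d}{max}\}_{k\ge 0}\subseteq\Phi_i$. Consequently, for any $t_2\in\Phi_2$ there is a nondecreasing tuple $(t_2,\dots,t_m)$ with $t_i\in\Phi_i$ and $t_m-t_2\le\sum_{j=2}^{m-1}d_j$ — take $t_i=t_2+\sum_{j=2}^{i-1}d_j$ when $t_2$ is a genuine communication time (so that $t_i=\phi_i+k\,2\subscr{d}{max}\in\Phi_i$), and $t_3=\dots=t_m=0$ when $t_2=0$. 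Hence $\textup{LT}_{\textup{up}}(X)\le\sum_{j=2}^{m-1}d_j$, and together with the lower bound of Lemma~\ref{lower_bound} this yields $\textup{LT}_{\textup{up}}(X)=\sum_{j=2}^{m-1}d_j$. The complexity claim then follows from Lemma~\ref{thm:conv_alg_part}: Algorithm~\ref{algo:partition} produces $\mathcal{V}$ in time $O(n\log(\varepsilon^{-1}))$, and computing $l_i,r_i,d_i$ and the partial sums $\phi_i$ that define Trajectory~\ref{algo:min_base} costs only $O(n)$.

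I expect the main obstacle to be pinning down the phase offsets: one must check that $\phi_i=\sum_{j<i}d_j$ makes the successive two-robot communications chain together with no idle time — i.e., produce the explicit nondecreasing tuple realizing $t_m-t_2=\sum_{j=2}^{m-1}d_j$ — while simultaneously confirming that these phase shifts leave the refresh-time analysis of Theorem~\ref{thm:summary_chain_rt} intact. The degenerate situations (singleton clusters with $d_i=0$, the artificial element $0\in\Phi_i$, and $m=2$, for which $\textup{LT}_{\textup{up}}(X)=0$) only decrease the up-latency and are handled directly.
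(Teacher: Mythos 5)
Your proposal is correct and follows essentially the same route as the paper's (much terser) proof: exhibit the $2\subscr{d}{max}$-periodic trajectory of Trajectory~\ref{algo:min_base}, note that robot $i-1$ reaches $r_{i-1}$ exactly when robot $i$ departs $l_i$ so that successive communications chain with no waiting, and conclude by matching the lower bound of Lemma~\ref{lower_bound} and invoking Lemma~\ref{thm:conv_alg_part} for the complexity. The only difference is that you spell out the phase offsets $\phi_i=\sum_{j<i}d_j$ and the explicit nondecreasing communication tuple, which the paper compresses into the single observation that ``no robot $i$ waits at $r_i$.''
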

\begin{proof}
  The theorem follows by observing that the trajectory is
  $2{\subscr{d}{max}}$-periodic, and that no robot $i$ waits at $r_i$
  to communicate with the neighboring robot $i+1$. The up-latency
  equals the lower bound in Lemma \ref{lower_bound}, and it is
  therefore minimum. Regarding the computational complexity, notice
  that it is determined by the computation of the optimal
  $m$-partition $\mathcal{V}$, and hence, by Lemma
  \ref{thm:conv_alg_part}, it equals $O(n\log(\varepsilon^{-1}))$.
\end{proof}

An example of a team trajectory with minimum refresh time and minimum
up-latency is in Fig. \ref{fig:traj3}. Finally, observe that the
minimization of the down-latency can be achieved in an analogous way.

\IncMargin{.3em}
\begin{algorithm}[t]
  {\footnotesize
  \SetKwData{Left}{left}
  \SetKwData{Right}{right}
  \SetKwData{Position}{position}
  \SetKwInOut{Input}{Input}
  \SetKwInOut{Output}{Output}
  \SetKwInOut{Set}{Set}
  \SetKwInOut{Require}{Require}
  \SetKwInOut{Title}{Algorithm}
  \Input{$l_i := \min_{v \in V_i} v$, $r_i := \max_{v \in V_i} v$,
    $d_i := r_i - l_i$, $d_\textup{max} := \max_j r_j -l_j$\;}
  \Require{optimal partition of the chain graph\;} \Set{$t_0 :=
    -\sum_{j=1}^{m-1} d_j$, $k \in \mathbb{N} \mapsto T_i(k) :=
    2kd_\textup{max} + t_0 + \sum_{j = 1}^{i-1} d_j$\;}

\BlankLine
    
    \nl$x_i (t) := l_i$ for $t_0 \le t \le T_i(0)$ and for $T_i(k) + 2d_i \le t \le T_i(k+1)$\;

    \nl$x_i (t) := r_i$ for $t := T_i(k) + d_i$\;

      \caption{\textit{Minimum base-latency team trajectory ($i$-th robot)}}
      \label{algo:min_base}}
\end{algorithm} 
\DecMargin{.3em}

\begin{figure}
	\centering
	\includegraphics[width=.5\columnwidth]{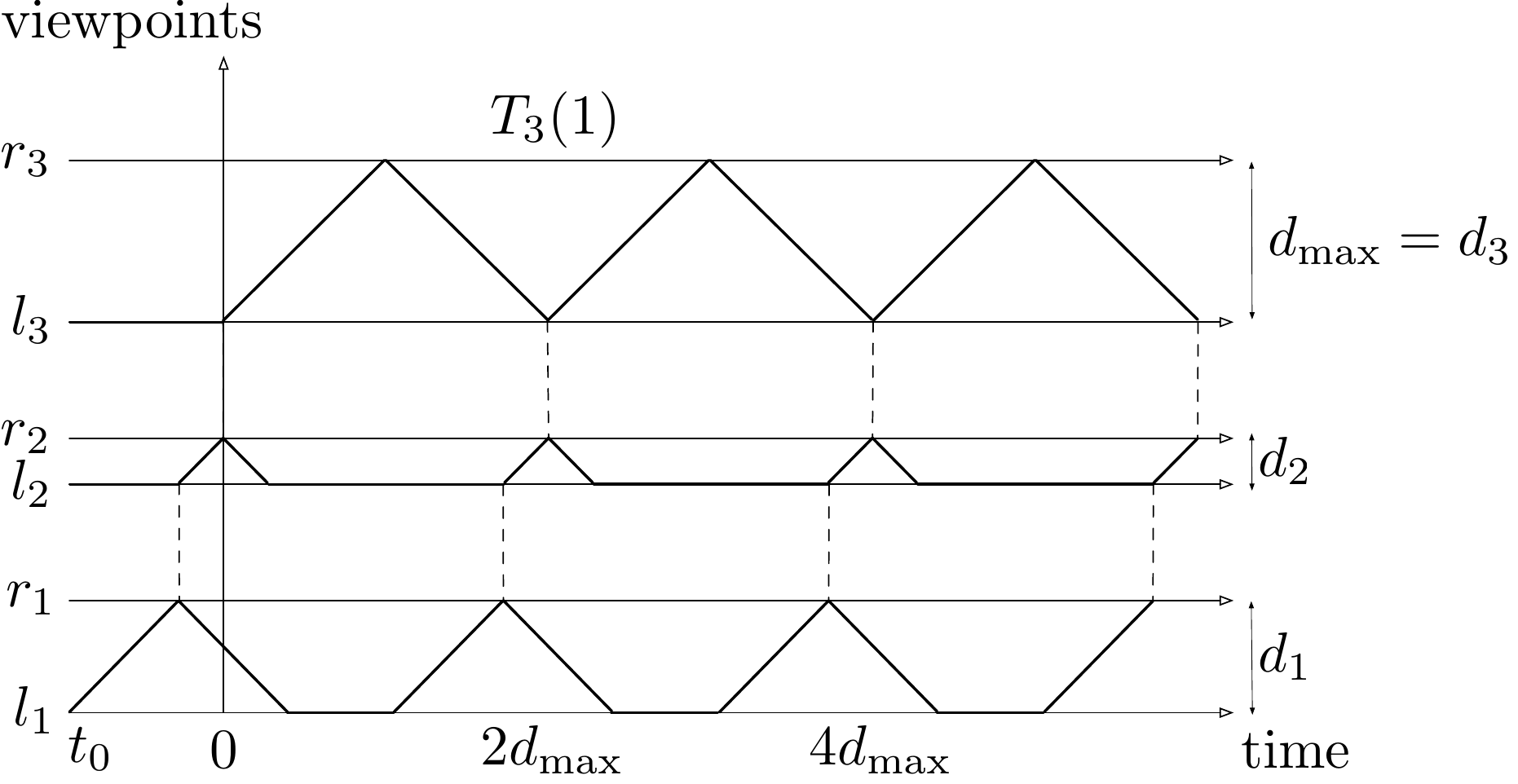}
	\caption{A team trajectory with minimum up-latency generated
          with the procedure in Trajectory \ref{algo:min_base}. Notice
          that each robot $i$ communicates with the neighboring robot
          $i+1$ as soon as $x_i (t) = r_i$. A team trajectory with
          minimum down-latency is obtained by shifting in time the
          trajectory of robot $1$, in a way that robot $2$
          communicates with robot $1$ as soon as $x_2 (t) =
          l_2$.}\label{fig:traj3}
\end{figure}

\begin{remark}[Simplifications for clusters of equal length]
  Let $\{V_1,\dots,V_m\}$ be an optimal $m$-partition, and suppose
  that $d_\textup{max}=r_i-l_i$ for all $i\in \{1,\dots,m\}$. Then the
  up-latency and the down-latency can be made minimum and equal to
  $(m-2) d_\textup{max}$ by arranging the robots trajectories to be in
  opposite phase. Specifically, for $k = 0,2,\dots$, we set
  \begin{align*}
    x_i (2 k d_\textup{max}) &= l_i,\;\;
    x_i ((2k+1)d_\textup{max}) = r_i,
  \end{align*}
  if $i$ is odd, and
  \begin{align*}
    x_i (2 k d_\textup{max}) &= r_i,\;\;
    x_i ((2k+1)d_\textup{max}) = l_i,
  \end{align*}
  if $i$ is even. Because of Lemma \ref{lower_bound}, the above
  trajectory has minimum latency. This particular case was studied in
  \cite{DBK-RWB-RSH:08}.
\end{remark}

\subsection{Lower bound for latency}
We now consider the minimization of the latency criterion, and we
restrict our attention to periodic team trajectories. To be more
precise, let $\{V_1,\dots,V_m\}$ be an optimal $m$-partition of the
environment, and let $d_\textup{max}$ denote the longest length of the
clusters. We aim at finding a $2d_\textup{max}$-periodic team trajectory
with image $\{V_1,\dots,V_m\}$ and minimum latency. Notice that, by
imposing a periodicity of $2d_\textup{max}$, the refresh time of the
trajectory, if finite, is also minimized.

We start by considering the pedagogical situation in which $d_i +
d_{i+1} > d_\textup{max}$ for all $i \in \{1,\dots,m-1\}$. In the next
Lemma, we show that the frequency of message exchange among the robots
is limited by the periodicity of the trajectory.

\begin{lemma}[Frequency of message
  exchange]\label{lemma:exchange_rate}
  Consider a $2d_\textup{max}$-periodic team trajectory, and let $d_i
  + d_{i+1} > d_\textup{max}$ for all $i \in \{1,\dots,m-1\}$. For any
  $t \in [0,\subscr{T}{f} - 2d_\textup{max}]$ and for any $i \in
  \{2,\dots,m-2\}$, there exist no two distinct sequences
  $t_i,t_{i+1},t_{i+2}$ and $\bar t_i,\bar t_{i+1},\bar t_{i+2}$, with
  $t_j, \bar t_j \in \Phi_j$, $j = i, i+1, i+2$,
  such that
  \begin{align*}
    &t \le t_i \le t_{i+1} \le t_{i+2} \le t + 2d_\textup{max}\\
    &t \le \bar t_i \le \bar t_{i+1} \le \bar t_{i+2} \le t +
    2d_\textup{max}\\
    &t_{i+1} \le \bar t_i\\
    &t_{i+2} \le \bar t_{i+1}.
  \end{align*}
  Moreover, for any $i \in \{m,\dots,4 \}$, there exist no two
  sequences $t_i,t_{i-1},t_{i-2}$ and $\bar t_i,\bar t_{i-1},\bar
  t_{i-2}$ with $t_j, \bar t_j \in \Phi_j$, $j = i,i-1,i-2$,
  such that
  \begin{align*}
    &t \le t_i \le t_{i-1} \le
    t_{i-2} \le t + 2d_\textup{max}\\
    &t \le \bar t_i \le \bar t_{i-1} \le \bar t_{i-2} \le t +
    2d_\textup{max}\\
    &t_{i-1} \le \bar t_i\\
    &t_{i-2} \le \bar t_{i-1}.
  \end{align*}
\end{lemma}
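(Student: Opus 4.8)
The plan is to argue by contradiction: I would convert the ordering and ``disjointness'' hypotheses into lower bounds on the distances travelled by robots $i$ and $i+1$ inside the window $[t,t+2d_\textup{max}]$, and then clash with the window length. Throughout I treat every element of $\Phi_j$ as an actual communication instant between robots $j-1$ and $j$ — the convention $0\in\Phi_j$ is immaterial here, since it carries no positional information and is never forced in the relay chains at issue. The key geometric fact is the following. Since we work with order invariant, non-overlapping team trajectories whose image is the $m$-partition $\{V_1,\dots,V_m\}$, each cluster is the block $V_j=\{v : l_j\le v\le r_j\}$, robot $j$ visits every viewpoint of $V_j$, and $r_{j-1}$ and $l_j$ are adjacent viewpoints; hence robots $j-1$ and $j$ can be on adjacent viewpoints — and thus can communicate — only when robot $j-1$ is at $r_{j-1}$ and robot $j$ is at $l_j$. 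Therefore, at every instant $\tau\in\Phi_j$, robot $j-1$ is at $r_{j-1}$ and robot $j$ is at $l_j$. I also record the elementary estimate: if a unit-speed robot with cluster length $d$ occupies positions $p_1,\dots,p_k\in\{l,r\}$ at times $\tau_1\le\cdots\le\tau_k$, then $\tau_k-\tau_1\ge\sum_{s=1}^{k-1}|p_{s+1}-p_s|$.

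Now suppose, for contradiction, that sequences $t_i\le t_{i+1}\le t_{i+2}$ and $\bar t_i\le\bar t_{i+1}\le\bar t_{i+2}$ in $[t,t+2d_\textup{max}]$ satisfy all the stated inequalities. Combining the two orderings with $t_{i+1}\le\bar t_i$ and $t_{i+2}\le\bar t_{i+1}$ yields the two ordered chains $t_i\le t_{i+1}\le\bar t_i\le\bar t_{i+1}$ and $t_{i+1}\le t_{i+2}\le\bar t_{i+1}\le\bar t_{i+2}$. By the geometric fact, robot $i$ occupies $l_i,r_i,l_i,r_i$ at the instants $t_i,t_{i+1},\bar t_i,\bar t_{i+1}$, and robot $i+1$ occupies $l_{i+1},r_{i+1},l_{i+1},r_{i+1}$ at the instants $t_{i+1},t_{i+2},\bar t_{i+1},\bar t_{i+2}$. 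The speed estimate, applied along these two four-term chains, gives $\bar t_{i+1}-t_i\ge 3d_i$ and $\bar t_{i+2}-t_{i+1}\ge 3d_{i+1}$, together with the cruder $t_{i+1}-t_i\ge d_i$ and $\bar t_{i+2}-\bar t_{i+1}\ge d_{i+1}$.

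Expanding $\bar t_{i+2}-t_i$ in two ways and adding,
\begin{align*}
2\,(\bar t_{i+2}-t_i)=\big[(t_{i+1}-t_i)+(\bar t_{i+2}-t_{i+1})\big]+\big[(\bar t_{i+1}-t_i)+(\bar t_{i+2}-\bar t_{i+1})\big]\ge (d_i+3d_{i+1})+(3d_i+d_{i+1}),
\end{align*}
so $\bar t_{i+2}-t_i\ge 2(d_i+d_{i+1})>2d_\textup{max}$ by the standing hypothesis $d_i+d_{i+1}>d_\textup{max}$. This contradicts $t\le t_i\le\bar t_{i+2}\le t+2d_\textup{max}$, proving the first assertion. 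The second assertion follows by the mirror-image argument applied to robots $i-1$ and $i-2$: their position chains are $r_{i-1},l_{i-1},r_{i-1},l_{i-1}$ at $t_i,t_{i-1},\bar t_i,\bar t_{i-1}$ and $r_{i-2},l_{i-2},r_{i-2},l_{i-2}$ at $t_{i-1},t_{i-2},\bar t_{i-1},\bar t_{i-2}$, and the same telescoping yields $\bar t_{i-2}-t_i\ge 2(d_{i-1}+d_{i-2})>2d_\textup{max}$, using $d_{i-2}+d_{i-1}>d_\textup{max}$, which holds since $i-2\in\{2,\dots,m-2\}\subseteq\{1,\dots,m-1\}$.

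The conceptual content is light, and I expect the main obstacle to be purely organizational: the bookkeeping in the two middle steps — correctly identifying which endpoint of its cluster each robot occupies at each of the six communication instants, and verifying that the resulting four-term chains of positions are genuinely monotone in time so that the speed estimate applies term by term. Once the chains $t_i\le t_{i+1}\le\bar t_i\le\bar t_{i+1}$ and $t_{i+1}\le t_{i+2}\le\bar t_{i+1}\le\bar t_{i+2}$ (and their mirror-image analogues) are assembled from the hypotheses, the telescoping identity and the resulting contradiction are immediate, and the only caveat — the convention $0\in\Phi_j$ — is dispatched by the remark that it conveys no position and hence is never part of an effective relay chain.
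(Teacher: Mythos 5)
Your proof is correct, and it takes a genuinely different route from the paper's. The paper's argument is a one-line counting claim: it picks out a single robot whose cluster length exceeds $d_\textup{max}/2$ (available since consecutive lengths sum to more than $d_\textup{max}$) and asserts that the two stacked relay sequences force that robot to traverse its cluster four times within one period of length $2d_\textup{max}$, which is impossible at unit speed; implicitly this uses the $2d_\textup{max}$-periodicity to close the fourth traversal. You instead work with \emph{both} middle robots $i$ and $i+1$ simultaneously: after pinning each of them at alternating endpoints of its cluster along the monotone chains $t_i\le t_{i+1}\le\bar t_i\le\bar t_{i+1}$ and $t_{i+1}\le t_{i+2}\le\bar t_{i+1}\le\bar t_{i+2}$, you telescope the two resulting travel-time bounds to get $\bar t_{i+2}-t_i\ge 2(d_i+d_{i+1})>2d_\textup{max}$, contradicting the window length directly. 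What your version buys is twofold: it uses the hypothesis $d_i+d_{i+1}>d_\textup{max}$ in its exact form rather than through the weaker consequence $\max\{\cdot,\cdot\}>d_\textup{max}/2$, so no case split or ``without loss of generality'' selection is needed (the paper's selection of $d_{i+1}$ versus $d_{i+2}$ is delicate, since robot $i+2$ is pinned only at $l_{i+2}$ by the hypotheses and is not forced to traverse its cluster at all inside the window); and it needs only the window length $2d_\textup{max}$, not the periodicity of the trajectory. Your side remark on the convention $0\in\Phi_j$ is also a legitimate point that the paper's proof passes over silently, and your resolution --- that the artificial element carries no positional constraint and hence cannot participate in an effective relay chain --- is consistent with the lemma's intent.
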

\begin{proof}
  Since $d_{i+1} + d_{i+2} > d_\textup{max}$, it follows
  $\max\{d_{i+1},d_{i+2}\} > d_\textup{max}/2$. Let $d_{i+1} >
  d_\textup{max}/2$. By contradiction, if two distinct sequences
  $t_i,t_{i+1},t_{i+2}$ and $\bar t_i,\bar t_{i+1},\bar t_{i+2}$
  exist, with $t \le t_i \le t_{i+1} \le t_{i+2} \le t +
  2d_\textup{max}$, $t \le \bar t_i \le \bar t_{i+1} \le \bar t_{i+2}
  \le t + 2d_\textup{max}$, $t_{i+1} \le \bar t_i$ and $t_{i+2} \le
  \bar t_{i+1}$, then the $(i+1)$-th robot travels its cluster four
  times. Since the speed of the robots is bounded by one, robot $i+1$
  cannot travel its cluster four times in a period of
  $2d_\textup{max}$ (cf. Fig. \ref{fig:frequency_exchange}). The
  second part of the theorem follows from an analogous reasoning.
\end{proof}

\begin{figure}
    \centering
    \includegraphics[width=.5\columnwidth]{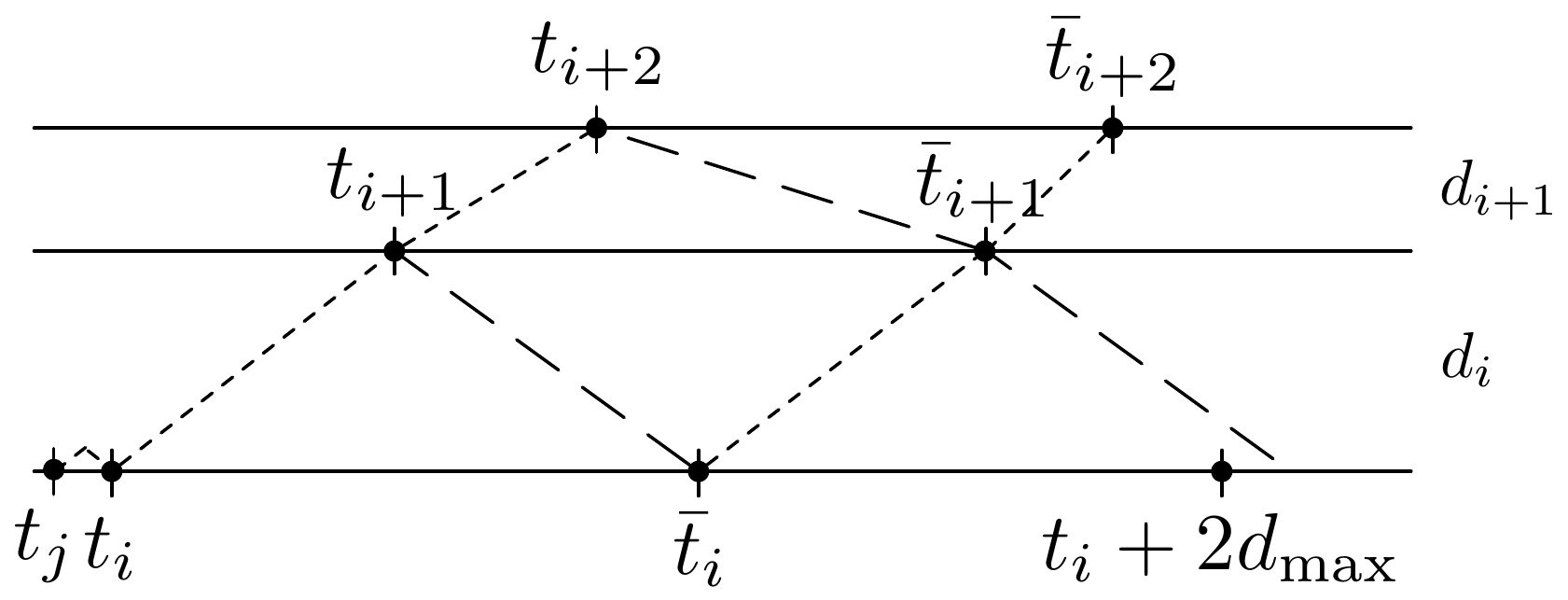}
    \caption{As stated in Lemma \ref{lemma:exchange_rate}, at most one
      communication sequence is possible within each period of length
      $2d_{\text{max}}$. Here $d_i + d_{i+1} > d_{\text{max}}$.}
    \label{fig:frequency_exchange}
\end{figure}


Notice that in the above Lemma the index $i$ belongs to the set
$\{2,\dots,m-2\}$ (resp. $\{m,\dots,4\}$) because we consider $3$
consecutive communication instants, and because $\Phi_i$ denotes the
sequence of times at which robots $i-1$ and $i$ communicate. Because
of Lemma \ref{lemma:exchange_rate}, in a $2d_\textup{max}$-periodic
team trajectory with $d_i + d_{i+1} > d_\textup{max}$ only one message
can be passed from robot $i$ to robot $i+3$ in a period of time of
$2d_\textup{max}$. This limitation determines a lower bound on the
latency of a periodic trajectory. Notice that eventual communication
instants $t_j \in \Phi_i$, with $t \le t_j \le t_i \le t_{i+1}$, do
not influence the latency, since all information can be transmitted at
time $t_i$ without affecting the latency performance.

\begin{lemma}[Latency lower bound, simple
  case]\label{lemma:lower_bound_latency}
  Let $X$ be a $2d_\textup{max}$-periodic team trajectory with $d_i +
  d_{i+1} > d_\textup{max}$ for all $i \in \{1,\dots,m-1\}$. Then
  \begin{align*}
    \min_X \operatorname{LT}(X) \ge (m-2) d_\textup{max}.
  \end{align*}
\end{lemma}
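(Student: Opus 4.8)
The plan is to deduce the statement from the sharper inequality
\[
\textup{LT}_{\textup{up}}(X)+\textup{LT}_{\textup{down}}(X)\ \ge\ 2(m-2)d_\textup{max},
\]
since $\operatorname{LT}(X)=\max\{\textup{LT}_{\textup{up}}(X),\textup{LT}_{\textup{down}}(X)\}\ge\tfrac12\big(\textup{LT}_{\textup{up}}(X)+\textup{LT}_{\textup{down}}(X)\big)$ then gives exactly the claim. (We may assume $\subscr{T}{f}$ arbitrarily large and every $\Phi_i$ nonempty; otherwise $\operatorname{LT}(X)=\infty$ and the bound is immediate, and the $\{0\}$, $\{\subscr{T}{f}\}$ terms in the definitions of $\Phi_i$ and $\bar\Phi_i$ only increase the latencies.) To establish the sharper inequality I would run a single \emph{round trip}: fix any $t_2\in\Phi_2$, let the greedy up-relay $t_2\le t_3\le\dots\le t_m$ with $t_{j+1}=\min\{\tau\in\Phi_{j+1}:\tau\ge t_j\}$ reach robot $m$ at $t_m\in\Phi_m$, and then let the greedy down-relay $t_m=s_m\le s_{m-1}\le\dots\le s_2$ with $s_{j-1}=\min\{\tau\in\Phi_{j-1}:\tau\ge s_j\}$ reach robot $2$ at $s_2\in\Phi_2$. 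Because the greedy relays realize the inner minima in the definitions of up- and down-latency, $\textup{LT}_{\textup{up}}(X)+\textup{LT}_{\textup{down}}(X)\ge (t_m-t_2)+(s_2-s_m)=s_2-t_2$, so it suffices to prove that every such round trip satisfies $s_2-t_2\ge 2(m-2)d_\textup{max}$.

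Next I would decompose the round-trip time as $s_2-t_2=\sum_{j=2}^{m-1}\sigma_j$ with $\sigma_j:=(t_{j+1}-t_j)+(s_j-s_{j+1})$, the time robot $j$ "spends" ferrying the message: recall that at any time in $\Phi_j$ robot $j$ sits at $l_j$ (next to robot $j-1$ at $r_{j-1}$) and at any time in $\Phi_{j+1}$ robot $j$ sits at $r_j$ (next to robot $j+1$ at $l_{j+1}$); hence during $[t_j,t_{j+1}]$ robot $j$ carries the message from $l_j$ to $r_j$ and during $[s_{j+1},s_j]$ it carries it back from $r_j$ to $l_j$, with $t_j\le t_{j+1}\le s_{j+1}\le s_j$. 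Since speeds are at most $1$, each $\sigma_j\ge 2d_j$, which merely reproduces the bound $\sum_{i=2}^{m-1}d_i$ of Lemma~\ref{lower_bound}. The task is therefore to upgrade $\sum_{j=2}^{m-1}\sigma_j\ge 2\sum_{j=2}^{m-1}d_j$ to $\sum_{j=2}^{m-1}\sigma_j\ge 2(m-2)d_\textup{max}$, and this is where the $2d_\textup{max}$-periodicity of $X$ (hence of each $\Phi_j$) and Lemma~\ref{lemma:exchange_rate} are essential.

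The clean case is when each $\Phi_j$ contains a single communication instant per period, say of phase $\beta_j\in[0,2d_\textup{max})$: then a short phase computation gives $\sigma_j=2d_\textup{max}$ \emph{exactly}, because the greedy up-gap at robot $j$ equals $(\beta_{j+1}-\beta_j)\bmod 2d_\textup{max}$ while the greedy down-gap equals $(\beta_j-\beta_{j+1})\bmod 2d_\textup{max}$, and their sum is $2d_\textup{max}$ (each is automatically $\ge d_j$ since $\Phi_j,\Phi_{j+1}$ are actual meeting times, so robot $j$ really traverses its cluster). Summing over $j=2,\dots,m-1$ then yields $s_2-t_2=2(m-2)d_\textup{max}$. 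The main obstacle I anticipate is the remaining case, in which some $\Phi_j$ carries more than one communication per period (possibly a whole interval): there the greedy relays may hop onto a nearer instant, so an individual $\sigma_j$ can fall below $2d_\textup{max}$, and one must show such a deficit is always compensated by a surplus at an adjacent link. This is precisely the content of Lemma~\ref{lemma:exchange_rate} under the simple-case hypothesis $d_i+d_{i+1}>d_\textup{max}$: within any window of length $2d_\textup{max}$ no two leapfrogging relay triples through three consecutive positions can coexist (the robot whose cluster exceeds $d_\textup{max}/2$ cannot traverse its segment four times per period), which forces the extra meeting opportunities at one link to be "paid for" by waiting at a neighbouring link. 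Making this bookkeeping precise — tracking how the greedy round trip interacts with multiple-per-period meeting instants across all $j$ — is the step I expect to be the hardest, and it is what ultimately delivers $\sum_{j=2}^{m-1}\sigma_j\ge 2(m-2)d_\textup{max}$ and hence $\min_X\operatorname{LT}(X)\ge (m-2)d_\textup{max}$.
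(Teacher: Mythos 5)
Your reduction to the sharper inequality $\textup{LT}_{\textup{up}}(X)+\textup{LT}_{\textup{down}}(X)\ge 2(m-2)d_\textup{max}$ via a greedy round trip, and the decomposition $s_2-t_2=\sum_{j=2}^{m-1}\sigma_j$, are a sensible and arguably cleaner framework than the paper's, and the ``clean case'' (one communication instant per link per period) is handled correctly, modulo the degenerate possibility $d_j=0$ which your phase argument silently excludes. But the proof is not complete: the entire content of the lemma beyond the weaker bound $\sum_{i=2}^{m-1}d_i$ of Lemma~\ref{lower_bound} lies in the case where some $\Phi_j$ contains several instants (or an interval) per period, and there you only assert that ``a deficit is always compensated by a surplus at an adjacent link'' and that the required bookkeeping ``is the step I expect to be the hardest.'' That compensation claim is exactly what must be proved, and it does not follow formally from the statement of Lemma~\ref{lemma:exchange_rate} as written: that lemma forbids two leapfrogging relay triples through three consecutive links within one period, whereas you need a quantitative statement that the per-link shortfalls $2d_\textup{max}-\sigma_j$ sum to at most zero along the whole chain. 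You would also need to justify that the universal claim ``\emph{every} round trip satisfies $s_2-t_2\ge 2(m-2)d_\textup{max}$'' holds, or else choose the starting instant $t_2$ adversarially; if greedy relays can shortcut at several links simultaneously, the single round trip you fix might be short even though the worst-case up- and down-latencies (attained at different phases) are not.

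For comparison, the paper's own proof does not attempt your link-by-link accounting at all: it reads Lemma~\ref{lemma:exchange_rate} directly as a throughput bound, namely that at most one message can traverse two consecutive relay clusters $V_{i+1},V_{i+2}$ per period of length $2d_\textup{max}$, so that each relay robot effectively costs $d_\textup{max}$ per direction, and summing over the $m-2$ relay robots gives $(m-2)d_\textup{max}$. That argument is itself terse, but it invokes the frequency lemma as the load-bearing step rather than deferring it. To complete your version you would need to turn Lemma~\ref{lemma:exchange_rate} into the explicit local inequality $\sigma_j+\sigma_{j+1}\ge 4d_\textup{max}$ (or a telescoping variant thereof) under the hypothesis $d_i+d_{i+1}>d_\textup{max}$; as it stands, that step is named but not carried out, so the proposal is a plan with a genuine gap rather than a proof.
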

\begin{proof}
  Because of Lemma \ref{lemma:exchange_rate}, a message can be
  transfered from robot $i$ to robot $i+3$ in at most $2d_\textup{max}$
  instants of time, by traveling the clusters $V_{i+1}$ and
  $V_{i+2}$. Without losing generality, we let $d_\textup{max}$ be the
  time to pass a message form $i+1$ to $i+2$, and from $i+2$ to
  $i+3$. Notice that the same reasoning holds also for the time to
  transfer a message from $i+3$ to $i$. Therefore, the latency is
  lower bounded by $(m-2) d_\textup{max}$.
\end{proof}

We now consider the situation in which an optimal $m$-partition does
not verify the constraint $d_i + d_{i+1} > d_\textup{max}$ for all $i
\in \{1,\dots,m-1\}$. Intuitively, for what concerns the latency, two
consecutive clusters with length $d_i$ and $d_{i+1}$ may be regarded
as one single cluster of length $d_i + d_{i+1}$. Therefore, in order
to use Lemma \ref{lemma:exchange_rate} and Lemma
\ref{lemma:lower_bound_latency}, we partition the clusters
$\{V_1,\dots,V_m\}$ into groups such that the sum of the length of the
clusters of two consecutive groups is greater than $d_\textup{max}$. In
other words, let $\bar V_r =\{\bar r_1,\dots,\bar r_{\bar m}\}$ be the
set of \emph{right-extreme} viewpoints of the partition
$\{V_1,\dots,V_m\}$ defined recursively as
\begin{align*}
  \bar r_0 &:= l_1,\\
  \bar r_i &:= \max_{j\in\{1,\dots,m\}} \setdef{ r_j }{ \sum_{k =
      p_i}^j d_k \le \subscr{d}{max}},\;\; i=1,\dots,\bar m,
\end{align*}
where $p_i = \min \{1,\dots,m\}$ such that $l_{p_i} \ge \bar
r_{i-1}$. Let $\bar V_l=\{\bar l_1,\dots,\bar l_{\bar m}\}$ be the set
of \emph{left-extreme} viewpoints defined recursively as
\begin{align*}
  \bar l_1 &:= l_1,\\
  \bar l_i &:= \min_{j\in \{1,\dots,m\}} \setdef{ l_j }{ l_j \ge \bar
    r_{i-1}}, \;\; i=2,\dots,\bar m.
\end{align*}
Additionally, define the set of aggregated clusters $\bar V = \{ \bar
V_1,\dots, \bar V_{\bar m} \}$, where $\bar V_i$ contains all the
clusters within $\bar l_i$ and $\bar r_i$, and let $\bar d_i$ be the
sum of the length of the clusters in $\Bar V_i$.

\begin{lemma}[Latency lower bound, general
  case]\label{thm:lat_general}
  Let $X$ be a $2d_\textup{max}$-periodic team trajectory with image
  $\{V_1,\dots,V_m\}$, and let $\bar m$ be the number of aggregated
  clusters. Then,
  \begin{align*}
    \min_X \operatorname{LT}(X) \ge (\bar m -2) d_\textup{max} + (\bar d_1 -
    d_1) + (\bar d_{\bar m} - d_m).
  \end{align*}
\end{lemma}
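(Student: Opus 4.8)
The plan is to reduce the general case to the simple case, using the aggregated clusters $\bar V_1,\dots,\bar V_{\bar m}$ as surrogates for the original clusters $V_1,\dots,V_m$. First I would record the structural property of the aggregation: $\bar d_i + \bar d_{i+1} > d_\textup{max}$ for every $i \in \{1,\dots,\bar m-1\}$. This is immediate from the maximality built into the definition of the right-extreme viewpoints $\bar r_i$: the group $\bar V_i$ is chosen as large as possible subject to $\bar d_i \le d_\textup{max}$, so appending the leftmost cluster of $\bar V_{i+1}$ would already exceed $d_\textup{max}$; since $\bar d_{i+1}$ is at least the length of that cluster, the inequality follows. Consequently the chain of aggregates $\bar V_1,\dots,\bar V_{\bar m}$ is in the situation covered by Lemma~\ref{lemma:exchange_rate} and Lemma~\ref{lemma:lower_bound_latency}.

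Next I would transfer those two lemmas from clusters to aggregates. Define a \emph{crossing} of an aggregate $\bar V_j$ to be the event that a message held by the robot occupying the left boundary viewpoint $\bar l_j$ is relayed, along the clusters composing $\bar V_j$, to the robot occupying the right boundary viewpoint $\bar r_j$; since every robot moves at unit speed, a crossing of $\bar V_j$ takes time at least $\bar d_j$. Now repeat the argument of Lemma~\ref{lemma:exchange_rate} verbatim, with ``robot $i+1$ sweeping its own cluster'' replaced by ``a message crossing the middle aggregate'': the separation property forces the larger of two consecutive aggregates to have length exceeding $d_\textup{max}/2$, so four crossings of it cannot be completed within a $2d_\textup{max}$-periodic schedule, which is exactly the contradiction needed. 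Feeding this into the proof of Lemma~\ref{lemma:lower_bound_latency} shows that relaying a message across the $\bar m-2$ interior aggregates $\bar V_2,\dots,\bar V_{\bar m-1}$ costs at least $(\bar m-2)d_\textup{max}$.

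Finally I would handle the two extreme aggregates with the coarser travel-distance bound, which is all that is needed there, and assemble the terms along an up-propagation of the message. Fix any $t_2 \in \Phi_2$; at time $t_2$ robot $2$ sits at $l_2$, so the message reaches the right end $\bar r_1$ of $\bar V_1$ only after an additional time at least $\bar d_1 - d_1$, the total length of the clusters $V_2,\dots$ making up $\bar V_1$. Symmetrically, once the message first reaches the left end $\bar l_{\bar m}$ of the last aggregate, robot $m$ is informed, i.e.\ robots $m-1$ and $m$ communicate, only after a further time at least $\bar d_{\bar m} - d_m$. Concatenating the three contributions yields $\textup{LT}_{\textup{up}}(X) \ge (\bar d_1 - d_1) + (\bar m-2)d_\textup{max} + (\bar d_{\bar m} - d_m)$, and since $\textup{LT}(X) \ge \textup{LT}_{\textup{up}}(X)$ for every $X$, and $t_2$ was arbitrary, the claim follows (the same estimate for $\textup{LT}_{\textup{down}}(X)$ holds by the mirror argument). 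The step I expect to be the main obstacle is the middle one: justifying carefully that the frequency-limitation mechanism of Lemma~\ref{lemma:exchange_rate}, originally stated for a single robot and its own cluster, survives the passage to an aggregate that is internally split among several robots, and checking the end-index bookkeeping so that exactly $\bar m-2$ interior aggregates — hence the factor $\bar m-2$ — are counted.
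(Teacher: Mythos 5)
Your proposal follows the same route as the paper's proof: verify that consecutive aggregated clusters satisfy $\bar d_i + \bar d_{i+1} > d_\textup{max}$, apply the simple-case lower bound (Lemma \ref{lemma:lower_bound_latency}) to the aggregated chain to obtain the $(\bar m -2) d_\textup{max}$ term, and add the boundary corrections $\bar d_1 - d_1$ and $\bar d_{\bar m} - d_m$ accounting for delivery to robots $1$ and $m$ in the original configuration. You supply more detail than the paper does---notably the derivation of the separation property from the maximality in the definition of $\bar r_i$, and the ``crossing'' argument showing that the frequency-limitation mechanism of Lemma \ref{lemma:exchange_rate} survives aggregation, both of which the paper leaves implicit---but the underlying argument is the same.
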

\begin{proof}
  Consider the clusters defined by the right extreme viewpoints, and
  notice that they verify $d_i + d_{i+1} > d_\textup{max}$. Then, the
  Theorem follows from Lemma \ref{lemma:lower_bound_latency}, and from
  the fact that the minimum latency on the image $\{\bar
  V_1,\dots,\bar V_{\bar m}\}$ equals the minimum latency on the image
  $\{V_1,\dots,V_m\}$. The terms $\bar d_1 - d_1$ and $\bar d_{\bar m}
  - d_m$ are due to the fact that we are interested in delivering a
  message from robot $1$ to robot $m$ in the original configuration,
  and not on the aggregated chain.
\end{proof}

\subsection{Optimal team trajectory for latency}\label{sec:realistic_case}
A team Trajectory with minimum refresh time and minimum latency is
formally presented in Trajectory \ref{algo:opt_traj}, where we specify
the instants of time at which a robot changes its velocity, and we
assume that it moves at maximum speed otherwise. An example is
reported in Fig. \ref{example_latency_opt}, and here we give an
informal description. Let $\{V_1,\dots,V_m\}$ be an optimal
$m$-partition of a chain graph, and let $\{\bar V_1, \dots, \bar
V_{\bar m}\}$ be the set of aggregated clusters. Recall that $\bar
V_i$ is a subset of $\{V_1,\dots,V_m\}$, and that the sum of the
length of two consecutive elements is larger than
$d_\textup{max}$. The procedure in Trajectory \ref{algo:opt_traj}
(lines $6-11$ and $18-23$) is such that the robots in the same group
behave as a single robot assigned to the whole set of viewpoints. In
other words, the motion of the robots in the same group is determined
by a token passing mechanism, in which robot $i+1$ moves towards
$r_{i+1}$ only when $x_i (t) = r_i$ and $x_i (t^-) \neq r_i$, and,
analogously, robot $i$ moves towards $l_{i}$ only when $x_i (t) = l_i$
and $x_i (t^-) \neq l_i$. Instead, lines $1-5$ and $12-17$ in
Trajectory \ref{algo:opt_traj} guarantee the transfer of one message
in a period of $2d_\textup{max}$ between three consecutive groups,
and, consequently, the minimization of the latency. Indeed, since the
sum of the length of two consecutive groups is larger than
$d_\textup{max}$, because of Lemma \ref{lemma:exchange_rate}, no more
than one message can be transferred between three consecutive groups
in a period of $2d_\textup{max}$.

\begin{theorem}[Patrolling a chain graph at minimum refresh time and
  minimum latency]
  Let $G$ be a chain graph with $n$ viewpoints and let $m \le n$ be
  the number of robots. Let $\mathcal{V}$ be an optimal $m$-partition
  of $G$ computed by means of Algorithm \ref{algo:partition} with
  tolerance $\varepsilon$. Let $\subscr{d}{max}$ be the dimension of
  $\mathcal{V}$, and let $d_1$ (resp. $d_m$) be the length of the
  first (resp. last) cluster in $\mathcal{V}$. Let $\bar m$ be the
  number of aggregated clusters, and let $\bar d_1$ (resp. $\bar
  d_{\bar m}$) be the length of the first (resp. last) aggregated
  cluster. A team trajectory with image $\mathcal{V}$, minimum refresh
  time $2 \subscr{d}{max}$, and minimum latency $(\bar m -2)
  d_\textup{max} + \bar d_1 - d_1 + \bar d_{\bar m} - d_m$ is computed
  as in Trajectory \ref{algo:opt_traj}. Moreover, the time complexity
  of designing such trajectory is $O(n\log(\varepsilon^{-1}))$.
\end{theorem}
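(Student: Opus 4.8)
The plan is to verify three properties of the team trajectory produced by Trajectory \ref{algo:opt_traj}---that its image equals $\mathcal{V}$, that it is $2\subscr{d}{max}$-periodic (so its refresh time is minimum), and that its latency attains the lower bound of Lemma \ref{thm:lat_general}---and then to bound the running time.

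First I would establish that the trajectory is well defined and $2\subscr{d}{max}$-periodic. Each robot $i$ visits only viewpoints in $V_i$, so the image is $\mathcal{V}$; since $d_i \le \subscr{d}{max}$, a full back-and-forth sweep of $V_i$ takes $2d_i \le 2\subscr{d}{max}$ time, leaving enough idle budget to insert the waiting intervals prescribed in lines $6$--$11$ and $18$--$23$. The token-passing rule inside an aggregated cluster $\bar V_k$ makes the robots of $\bar V_k$ jointly sweep a segment of length $\bar d_k$; by construction $\bar d_k \le \subscr{d}{max}$, so this joint sweep also fits within a half-period, and the whole construction is consistent with a period of $2\subscr{d}{max}$. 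Once periodicity with period $2\subscr{d}{max}$ is checked, every viewpoint is visited at least once per period, so $\text{RT}(X) \le 2\subscr{d}{max}$; by Theorem \ref{thm:min_refre_time} this is exactly the minimum refresh time.

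Next comes the latency, which is the crux. Treating each aggregated cluster as a single virtual robot patrolling a segment of length $\bar d_k$, lines $1$--$5$ and $12$--$17$ schedule the hand-offs between three consecutive groups so that, in every window of length $2\subscr{d}{max}$, exactly one message is relayed forward (and one backward) across the groups---this is the best possible by Lemma \ref{lemma:exchange_rate}, applied to the aggregated chain, where $\bar d_k + \bar d_{k+1} > \subscr{d}{max}$ holds by construction. A message originating at robot $1$ must first travel to the right end of the group $\bar V_1$, a distance $\bar d_1 - d_1$; it then crosses the $\bar m - 2$ interior groups $\bar V_2,\dots,\bar V_{\bar m -1}$, each of which, by the one-relay-per-period bound, costs $\subscr{d}{max}$; and it finally reaches robot $m$ inside $\bar V_{\bar m}$ after a further $\bar d_{\bar m} - d_m$. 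Summing yields the up-latency $(\bar m -2)\subscr{d}{max} + (\bar d_1 - d_1) + (\bar d_{\bar m} - d_m)$; the symmetric argument for the reverse direction gives the same value for the down-latency, so $\text{LT}(X)$ equals that quantity, which coincides with the lower bound of Lemma \ref{thm:lat_general}. Hence the latency is minimum.

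Finally, the running time is dominated by the computation of the optimal $m$-partition $\mathcal{V}$, which by Lemma \ref{thm:conv_alg_part} costs $O(n\log(\varepsilon^{-1}))$; the aggregated clusters and the parameters $t_0$ and $T_i(k)$ defining the trajectory are obtained from $\mathcal{V}$ by a single $O(n)$ pass, giving the claimed overall complexity. I expect the main obstacle to be the matching upper bound on the latency: one must argue carefully that the hand-off schedule in Trajectory \ref{algo:opt_traj} never stalls---that the idle intervals and the token-passing instants can be chosen consistently with a single period $2\subscr{d}{max}$ while simultaneously saturating the one-relay-per-period limit in both directions---so that no hidden conflict forces an extra period somewhere along the chain.
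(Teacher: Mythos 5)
Your proposal is correct and follows essentially the same route as the paper: verify $2\subscr{d}{max}$-periodicity for the refresh time, show the hand-off schedule attains the lower bound of Lemma \ref{thm:lat_general} for the latency, and attribute the complexity to the partition computation of Lemma \ref{thm:conv_alg_part}. The paper's own proof is in fact terser (it appeals to ``by inspection'' and ``by construction'' for exactly the points you elaborate), so your additional detail on the per-period relay accounting and the non-stalling of the token schedule only strengthens the argument.
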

\begin{proof}
  By inspection, the team Trajectory described in Trajectory
  \ref{algo:opt_traj} is $2d_\textup{max}$-periodic, and therefore it
  has minimum refresh time. Moreover, by construction, the
  communications at the extreme viewpoints happen every
  $2d_\textup{max}$ instants of time, so that the latency is equal to
  $(\bar m -2) d_\textup{max} + \bar d_1 - d_1 + \bar d_{\bar m} -
  d_m$, and hence, by Lemma \ref{thm:lat_general}, minimum. Regarding
  the computational complexity, notice that it is determined by the
  computation of the optimal $m$-partition $\mathcal{V}$, and hence,
  by Lemma \ref{thm:conv_alg_part}, it equals
  $O(n\log(\varepsilon^{-1}))$.
\end{proof}

\IncMargin{.3em}
\begin{algorithm}[t]
  {\footnotesize
  \SetKwData{Left}{left}
  \SetKwData{Right}{right} 
  \SetKwData{Position}{position} 
  \SetKwInOut{Input}{Input}
  \SetKwInOut{Set}{Set}
  \SetKwInOut{Require}{Require}
  \Input{$l_i := \min_{v \in V_i} v$, $r_i := \max_{v \in V_i} v$, 
    $d_\textup{max} := \max_j r_j - l_j$, $R :=$ set containing the identifiers
    of the robots in the same group as $i$, $k :=$ index of the
    aggregated cluster $\{1,\dots,\bar m\}$\;}
  \Require{optimal partition of the chain graph\;}
    \BlankLine

    \nl\Case{$l_i$ is a left-extreme}{
      \nl\If{$k$ is odd}{
      
        \nl$x_i(t)=l_i$ with $t=0,2d_\textup{max},4d_\textup{max},\dots$\;
        
      }
      \nl\ElseIf{$k$ is even}{

        \nl$x_i(t)=l_i$ with $t=d_\textup{max},3d_\textup{max},5d_\textup{max},\dots$\;

      }
    }
    \nl\Case{$l_i$ is not a left-extreme}{
      \nl$\delta_i:= \sum_{j \in R, j < i} d_j $\;
      \nl\If{$k$ is odd}{
      
        \nl$x_i(t)=l_i$ for $\tau -\delta_i \le t \le \tau + \delta_i$, with
        $\tau = 0,2d_\textup{max},4d_\textup{max},\dots$\;
 
      }
      \nl\ElseIf{$k$ is even}{

        \nl$x_i(t)=l_i$ for $\tau -\delta_i \le t \le \tau + \delta_i$, with
        $\tau = d_\textup{max},3d_\textup{max},5d_\textup{max},\dots$\;

      }
    }

   \nl\Case{$r_i$ is a right-extreme}{
     \nl$\delta_i :=  d_\textup{max} - \sum_{j \in R} d_j\;$
     
     \nl\If{$k$ is odd}{
      
        \nl$x_i(t)=r_i$ for $\tau - \delta_i \le t \le \tau + \delta_i$,
        with $\tau = d_\textup{max},3d_\textup{max},5d_\textup{max},\dots$\;

      }
      \nl\ElseIf{$k$ is even}{

        \nl$x_i(t)=r_i$ for $\tau - \delta_i \le t \le \tau + \delta_i$,
        with $\tau = 0,2d_\textup{max},4d_\textup{max},\dots$\;

      }
    }

    \nl\Case{$r_i$ is not a right-extreme}{
      \nl$\delta_i:= d_\textup{max} - \sum_{j \in R, j \le i} d_j $\;
      \nl\If{$k$ is odd}{
      
        \nl$x_i(t)=r_i$ for $\tau - \delta_i \le t \le \tau + \delta_i$,
        with $\tau = d_\textup{max},3d_\textup{max},5d_\textup{max},\dots$\;

      }
      \nl\ElseIf{$k$ is even}{

        \nl$x_i(t)=r_i$ for $\tau - \delta_i \le t \le \tau + \delta_i$,
        with $\tau = 0,2d_\textup{max},4d_\textup{max},\dots$\;
      }
    }
    \caption{\textit{Minimum refresh time and latency trajectory
        ($i$-th robot)}}
 \label{algo:opt_traj}}
\end{algorithm}
\DecMargin{.3em}

\begin{figure}
	\centering
	\includegraphics[width=.55\columnwidth]{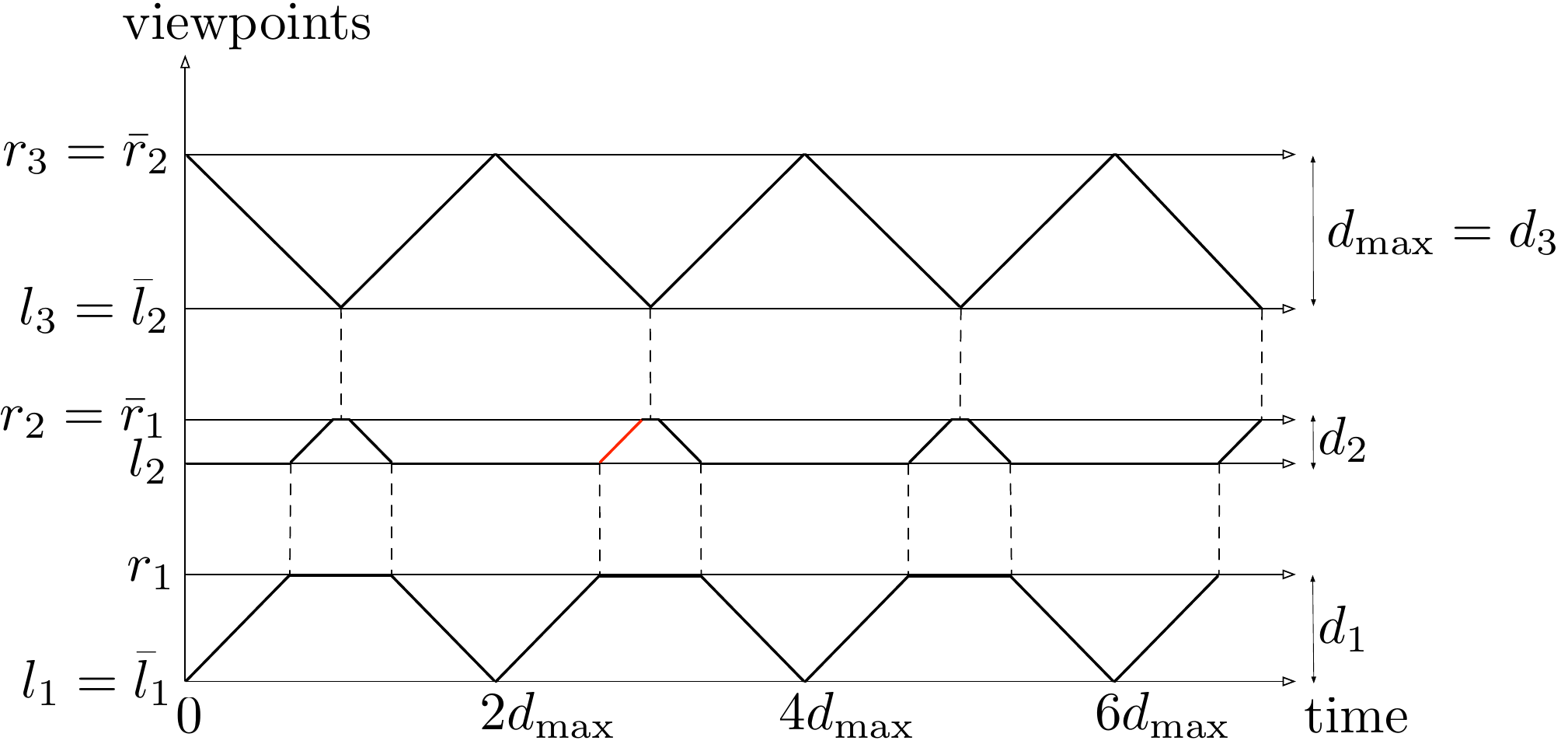}
	\caption{A team trajectory with minimum refresh time
          $(2d_\textup{max})$ and minimum latency $((\bar m -
          2)d_\textup{max} + d_1 + d_2 - d_1 + d_3 - d_3 = d_2)$ is
          here reported. A message with delivery time equal to $d_2$
          is reported in red. The right-extreme viewpoints are $\{\bar
          r_1,\bar r_2\}$, the left-extreme viewpoints are $\{\bar
          l_1,\bar l_2\}$, and $\bar m = 2$. The represented team
          trajectory has minimum refresh time $(2d_\textup{max})$ and
          minimum latency $((\bar m - 2)d_\textup{max} + (d_1 + d_2) -
          d_1 + d_3 - d_3 = d_2)$. A message with delivery time equal
          to $d_2$ is reported in red.}\label{example_latency_opt}
\end{figure}

\section{Distributed synchronization algorithm on a chain
  roadmap}\label{sec:distr_alg}
In the previous sections we have shown that, for the computation of a
minimum refresh time and latency team trajectory, first an optimal
$m$-partition of the roadmap needs to be found, and, then, a
synchronization of the motion of the robots needs to be achieved to
ensure communication between neighboring robots. The distributed
computation of an optimal $m$-partition follows directly from
Algorithm \ref{algo:partition}, by letting the robots compute the
left-induced partition of length $l$ in a distributed way. A simple
solution consists of the following three operation:
\begin{enumerate}
  \item the robots gather at the leftmost viewpoint, and
  \item determine the cardinality of the team and elect a
    leader;
  \item the leader computes an optimal left-induced partition, and
    assigns a different cluster to each robot.
\end{enumerate}
Notice that, by only assuming the capability of detecting the
viewpoints in the roadmap (in particular the leftmost and rightmost
viewpoint) the robots can distributively compute an optimal
partition. Indeed, the leader can simply travel the roadmap, and
evaluate if, for a given length $\rho$, the cardinality of the
corresponding left-induced partition is greater, equal, or smaller
than the cardinality of the team. We believe that, by simple
programming operations, the above procedure can be improved, so as to
handle reconfigurations due to addition or removal of robots in the
team. In this section we focus instead on the design of a distributed
feedback law to synchronize the motion of the robots so as to
guarantee the minimization of the latency of the trajectory.

Recall that $x_i(t)$ denotes the position on the chain of the robot
$i$ at time $t$. Moreover, let the $i$-th cluster of an optimal
$m$-partition be delimited by $l_i$ and $r_i$. Let $\text{dir}_i \in
\{-1,0,1\}$ denote the direction of motion of robot $i$. Let
\textit{c-time} denote the current time of simulation, let
\textit{a-time} be the time at which a robot arrives at his right
extreme. Let \textit{n-meet(i,j)} be a function that returns the
number of times that the robots $i$ and $j$ have communicated. Let
\textit{Timer($\delta$)} be a function that returns $1$ after a time
interval of length $\delta$. An algorithm for the robots to
distributively converge to a minimum refresh time and latency team
trajectory is in Algorithm \ref{algo:synchro}. It should be noticed
that the algorithm assumes the knowledge of an optimal partitioning of
the chain graph, and of the left- and right-extreme sets.

Algorithm \ref{algo:synchro} is informally described as
follows. First, the velocity of a robot changes only when its position
coincides with the leftmost or the rightmost of the assigned
viewpoints. When a robot reaches an extreme viewpoint, it waits until
a communication with the neighboring robot happens (lines $7-8$). This
mechanism determines the feedback behavior of our procedure. The
behavior of a robot after a communication is determined by the lines
$9-21$, which reproduce the optimal behavior described in Trajectory
\ref{algo:opt_traj}. To be more precise, the function
\textit{Token($i,j$)} coordinates the motion of the robots in the same
group (see Section \ref{sec:realistic_case}), so that they move as if
there was a single robot sweeping the viewpoints in the same
group. The function \textit{Timer($\delta_i$)}, instead, ensures the
maximum frequency of messages exchange between consecutive groups, so
as to minimize the latency of the resulting trajectory.

\SetAlgorithmName{Algorithm}{ of Algorithms}{Algorithms}
\IncMargin{.3em}
\begin{algorithm}[t]
  {\footnotesize
    \SetKwInOut{Input}{Input}
    \SetKwInOut{Output}{output}
    \SetKwInOut{Title}{Algorithm}
    \SetKwInOut{Set}{Set}
    \SetKwInOut{Require}{Require}
    \Input{$l_i := \min_{v \in V_i} v$, $r_i := \max_{v \in V_i} v$,
      $d_\text{max} := \max_j r_j - l_j$, $R :=$ set containing the identifiers
      of the robots in the same group as $i$\;}  
    \Set{$\delta_i: = (d_\text{max} - \sum_{j\in R} d_j)/2$,
      $\text{dir}_i \in \{1,-1\}$\;}

    \Require{optimal partition of the chain graph; $l_i \le x_i(0) \le r_i$\;}
    \BlankLine
 
  \nl\While{true}{
    \nl\Case{$i = 1$ and $x_1(t) = l_1$}{
      \nl$\text{dir}_1:=1$\;
      }
      \nl\Case{$i = m$ and $x_m(t) = r_m$}{
      \nl$\text{dir}_m:=-1$\;
      }
    \nl\Case{($x_i(t) = l_i$ and $x_{i-1}(t) \neq r_{i-1}$) or
      \mbox{$\;\;\;\;\;\;\;$($x_i(t) = r_i$ and $x_{i+1}(t) \neq l_{i+1}$)}}{ 
      
      \nl$\text{dir}_i:=0$\;
    }
      


          

          

    \nl\Case{$x_i(t) = l_i$ and $x_{i-1}(t) = r_{i-1}$}{
      \nl\If{$l_i\in \bar V_l$}{
        
        \nl receive $\tau$ from robot $i-1$\;
        
        \nl$\text{dir}_i:=\text{Timer}(\tau)$\;
      }
      \nl\ElseIf{$l_i \not\in \bar V_l$}{\nl$\text{dir}_i:=\text{Token}(i,i-1)$\;}
    }
    \nl\Case{$x_i(t) = r_i$ and $x_{i+1}(t) = l_{i+1}$}{
     \nl $\tau:=\max\{0, \textit{a-time}+\delta_i-\textit{c-time}
     \}$\;
     
     \nl send $\tau$ to robot $i+1$\;
     
     \nl\If{$r_i\in \bar V_r$}{
       
       \nl$\text{dir}_i:=-\text{Timer}(\delta_i + \tau)$\;}
     \nl\ElseIf{$r_i \not\in \bar V_r$}{\nl$\text{dir}_i:=\text{Token}(i, i+1)$\;}
   } 
 \nl$\dot x_i (t) :=\text{dir}_i$\;
  }
  \caption{\textit{Minimum refresh
      time and latency team trajectory ($i$-th robot)}}
 \label{algo:synchro}}
\end{algorithm} 
\DecMargin{.3em}
\SetAlgorithmName{Trajectory}{ of Trajectories}{Trajectories}

\IncMargin{.3em}
\begin{function}[t]
  {\footnotesize
  \SetKwData{Left}{left} 
  \SetKwData{Right}{right} 
  \SetKwData{Position}{position} 
  \SetKwInOut{Input}{Input}
  \SetKwInOut{Output}{output}
  \SetKwInOut{Title}{Function}
  \SetKwInOut{Set}{Set}
  \BlankLine
  \nl\lCase{$i>j$}{
    \lIf{\text{n-meet(i,j)} is even}{\Return{1}}
  }
  
 \nl\lCase{$i<j$}{
    \lIf{\text{n-meet(i,j)} is odd}{\Return{-1}}
  }
  
  \nl\lOther{\Return{0}}
 \caption{Token(i,j)}
 \label{algo:token}}
\end{function} 
\DecMargin{.3em}

\begin{theorem}[Optimal team trajectory]\label{thm:latency}
  Let $X$ be the team trajectory generated by Algorithm
  \ref{algo:synchro}. There exists a finite time after which $X$ has
  minimum refresh time and latency.
\end{theorem}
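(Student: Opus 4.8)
The plan is to prove that the closed-loop trajectory $X$ produced by Algorithm \ref{algo:synchro} eventually coincides, up to an irrelevant time shift, with the open-loop trajectory described in Trajectory \ref{algo:opt_traj}, which we have already shown to have minimum refresh time $2d_\textup{max}$ and minimum latency. Since the robots know an optimal $m$-partition $\{V_1,\dots,V_m\}$ together with the left- and right-extreme sets $\bar V_l,\bar V_r$, the grouping into aggregated clusters $\{\bar V_1,\dots,\bar V_{\bar m}\}$ and the offsets $\delta_i$ are fixed data; the only quantities that must be resolved online are the phases of the robots. I would therefore establish convergence of these phases in two steps: intra-group first, inter-group second.

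Intra-group synchronization. Fix an aggregated cluster $\bar V_k$ and the robots it contains. A non-extreme robot $i$ leaves $l_i$ (resp.\ $r_i$) only after communicating with robot $i-1$ at $r_{i-1}$ (resp.\ with robot $i+1$ at $l_{i+1}$), and then moves according to \textit{Token} (Function \ref{algo:token}). Arguing by induction on the number of communication events, one shows that after each internal adjacent pair of the group has met a bounded number of times, the robot holding the token is already waiting at the shared extreme when its neighbor arrives, so that internal communications become instantaneous. Hence the group sweeps $\bar V_k$ back and forth at unit speed, acting as a single virtual robot assigned to $\bar V_k$ and spending time $\bar d_k\le d_\textup{max}$ on each one-way traversal; its phase is pinned by the group's left-extreme robot (which waits for the preceding group) and right-extreme robot (which waits for the following group).

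Inter-group synchronization and periodicity. Reducing to the aggregated chain, the dynamics are those of $\bar m$ virtual robots, of which the first is autonomous: robot $1$ reverses at $l_1$ without waiting, which makes group $1$ exactly $2d_\textup{max}$-periodic with the phase prescribed in Trajectory \ref{algo:opt_traj}. Periodicity then propagates along the chain by induction on $k$: once group $k$ is $2d_\textup{max}$-periodic with its right-extreme communication instant locked to the nominal value, the \textit{Timer} invoked in lines 14--18 of Algorithm \ref{algo:synchro} guarantees that group $k$ releases the message to group $k+1$ exactly once per period and at the correct phase, where the nonnegative correction $\tau=\max\{0,\textit{a-time}+\delta_i-\textit{c-time}\}$ absorbs any early arrival without ever causing a deadlock; hence group $k+1$ becomes $2d_\textup{max}$-periodic with the prescribed phase after at most a bounded number of additional periods. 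After at most $\bar m-1$ such inductive steps, i.e.\ after a finite time, all communications at the aggregated extremes occur exactly once every $2d_\textup{max}$, so $X$ coincides with the trajectory of Trajectory \ref{algo:opt_traj}. From that time on $X$ is $2d_\textup{max}$-periodic, hence has refresh time $\text{RT}^*=2d_\textup{max}$, and its latency equals $(\bar m-2)d_\textup{max}+(\bar d_1-d_1)+(\bar d_{\bar m}-d_m)$, which is minimal by Lemma \ref{thm:lat_general}.

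The main obstacle is the finite-time character of the inter-group argument: one must rule out a livelock or a phase that approaches its nominal value only asymptotically. I would make this precise by attaching to each group boundary the scalar phase error between the actual and nominal release instants, reduced modulo $2d_\textup{max}$; the saturation $\max\{0,\cdot\}$ in the definition of $\tau$ forces this error to be non-increasing and to vanish after finitely many periods, and chaining the resulting bounds across the $\bar m-1$ boundaries yields an explicit finite convergence time of order $\bar m\, d_\textup{max}$ plus the intra-group transient.
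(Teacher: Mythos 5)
Your proposal is correct and follows essentially the same route as the paper's proof: intra-group synchronization via the \textit{Token} mechanism (so each aggregated cluster behaves as a single virtual robot after finite time), followed by inter-group phase locking via the \textit{Timer} and the correction $\tau$, with robot $1$ anchoring the phase, and then invoking Lemma \ref{thm:lat_general} for optimality. Your treatment of the finite-time convergence of the phase errors is more explicit than the paper's, which asserts this step without the monotonicity argument, but the underlying decomposition and key mechanisms are identical.
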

\begin{proof}
  Let $\{V_1,\dots,V_m\}$ be an optimal $m$-partition, let $\bar V_r =
  \{\bar r_1 , \dots, \bar r_{C^*}\}$ be the set of right-extreme
  viewpoints, and let $\bar V_l =\{\bar l_1 , \dots, \bar l_{C^*}\}$
  be the set of left-extreme viewpoints. Let $R$ denote the set of
  robots patrolling a viewpoint between $\bar l_i$ and $\bar r_i$,
  where $\bar l_i$ and $\bar r_i$ are as previously defined. First,
  notice that robot $1$ (resp. $m$) sets $\dot x_1 (t) = 1$
  (resp. $\dot x_m (t) = -1$) as soon as $x_1 (t) = l_1$ (resp. $x_m
  (t) = r_m$). Second, the function $\textit{Token}(i,j)$ guarantees
  that, when $i,j \in R$ communicate, exactly one robot among $i , j$
  maintains a zero velocity and in an alternate way. Therefore, after
  a finite time $T_i$, independent of the initial robots motion
  direction, the velocities of the robots in $R$ are such that, upon
  communication, $\dot x_i (t) = \dot x_{i-1} (t^{-})$ and $\dot
  x_{i-1} (t) = 0$. In other words, after $T_i$, the robots in $R$
  behave as a single robot sweeping the segments between $\bar l_i$
  and $\bar r_i$. Finally, the function $\textit{Timer}(\tau)$ and the
  parameter $\tau$ guarantee that the communications at the extreme
  viewpoints happen every $2d_\text{max}$ instants of time. We
  conclude that the trajectory generated by Algorithm
  \ref{algo:synchro} converges in finite time to a team trajectory
  with minimum refresh time and latency.
\end{proof}

It should be observed that, differently from the team trajectories
presented in the previous sections, Algorithm \ref{algo:synchro}
contains a feedback procedure to synchronize the motion of the
robots. Our algorithm is independent of the initial configuration,
and, as it is shown in the next section, it is robust to a certain
class of robot failures.

\section{A case study}\label{sec:simulations}
\begin{figure}[tb!]
  \centering
  \includegraphics[width=0.5\columnwidth]{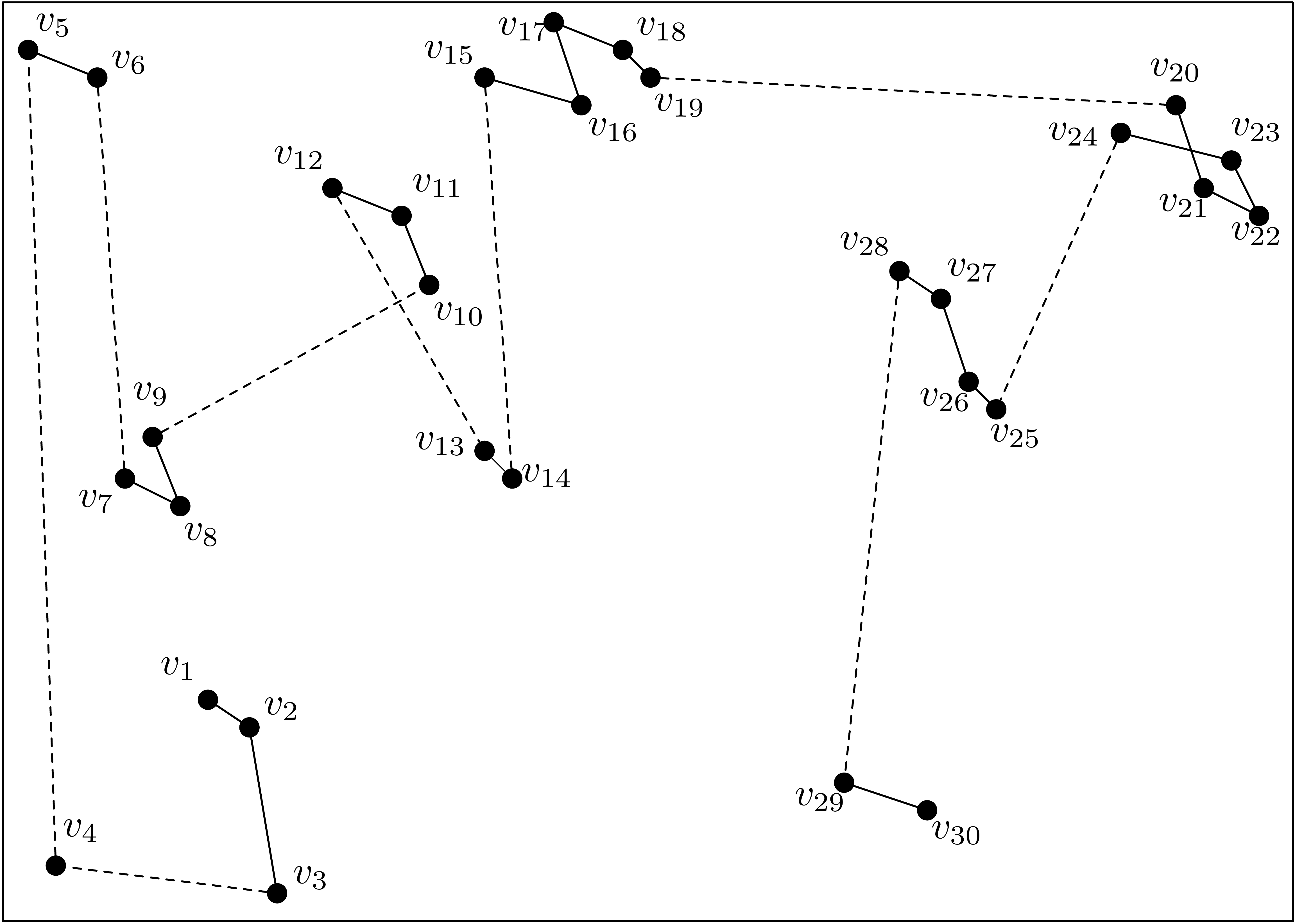}\\
  \caption{The figure shows a robotic roadmap with sensor coverage and
    communication connectivity of a two floors building. Crossing
    edges corresponds to corridors at different floors. A 10-partition
    of the roadmap is here reported. The dashed edges are not traveled
    by any robot.}\label{fig:simulation}
\end{figure}

Given the theoretical nature of this work, in this section we propose
a simulation study to show the effectiveness of our procedures. For
our simulations, we use the Matlab\textregistered \;simulation
environment, and we model the robots as holonomic vehicles of zero
dimension. The communication edges and the motion paths are described
by a given roadmap.

Consider the chain roadmap with $30$ viewpoints in
Fig. \ref{fig:simulation}. Suppose that $10$ robots are assigned to
the patrolling task. In order to obtain a team trajectory with minimum
refresh time, an optimal $10$-partition of the roadmap is computed
(cf. Fig. \ref{fig:simulation}). Additionally, to obtain a minimum
latency team trajectory, each robot is assigned to a different
cluster, its initial position is chosen randomly inside its cluster,
and its velocity is initialized randomly. The motion of each robot is
then determined by Algorithm \ref{algo:synchro}. The resulting team
trajectory is in Fig. \ref{fig:trajectories}, where the team of robots
synchronize on a minimum refresh time and latency team trajectory
after a finite transient.

\begin{figure}
  \centering
  \includegraphics[width=.55\columnwidth]{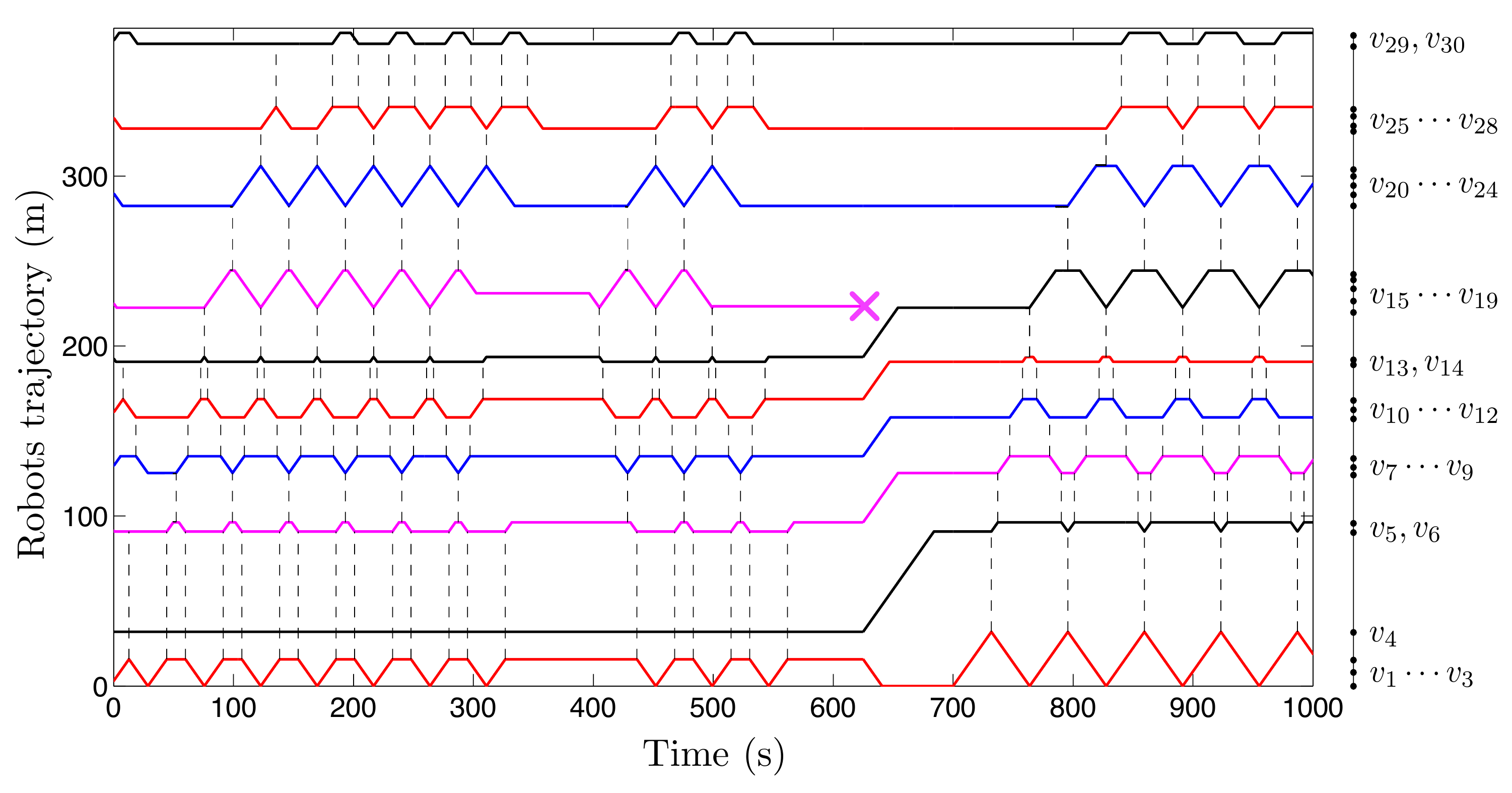}\\
  \caption{For the roadmap of Fig. \ref{fig:simulation}, the team
    trajectory obtained with Algorithm \ref{algo:synchro} is reported
    here. The dashed lines denote a communication among two
    neighboring robots. At time $t=200$ the robots have synchronized
    their motion, and from that moment up to time $t=300$ the team
    trajectory has minimum refresh time and latency. From time $t=300$
    up to time $t=400$, robot $7$ undergoes a temporary failure,
    causing all the other robots to lose synchronization. The team of
    robots synchronizes again when robot $7$ resume its motion. At
    time $t=620$, the failure of robot $7$ is detected by the
    remaining robots, which compute and synchronize on a new
    partition.}\label{fig:trajectories}
\end{figure}

We now test the robustness of our synchronization procedure. As first
case study, we consider a temporary stopping failure, in which a robot
stops for a certain time interval. For instance, suppose that robot
$7$ stops from time $300$ up to time $400$
(cf. Fig. \ref{fig:trajectories}). Notice that, after the failure,
each robot $j$, with $j<7$, gathers at $r_j$, and each robot $k$, with
$k > 7$, gathers at $l_k$ waiting for a communication with the
corresponding neighboring robot. As soon as robot $7$ resumes its
functionalities, the team of robots recover the desired
synchronization. Notice that the transient failure of robot $7$ can be
easily detected by its neighbors by means of a timer mechanism with a
predefined threshold.

As a second case study, we let the robots actuation be affected by
noise, so that the speed of the robots becomes a random variable with
a certain distribution.\footnote{The case in which a robot fails at
  seeing a neighboring robot for a certain interval of time can be
  modeled analogously.}  Precisely, let $\dot x_i=\text{dir}_i + w_i$
be the equation describing the dynamics of robot $i$, where $w_i$ is a
zero mean Gaussian variable with variance $\sigma^2$
$[(\text{m/s})^2]$. We let $\sigma^2 \in \{0, 0.02, \dots, 0.5\}$ and
we run $100$ simulations for each possible value of $\sigma^2$ on the
roadmap of Fig. \ref{fig:simulation}. The refresh time and the latency
of the team trajectories obtained with Algorithm \ref{algo:synchro}
are plotted in Fig. \ref{refresh_error} and in Fig.
\ref{latency_error}, respectively, as a function of $\sigma^2$. Note
that the performance degrade gracefully with the noise magnitude.

\begin{figure}
	\centering
	\subfigure[]{
          \includegraphics[width=0.3\columnwidth]{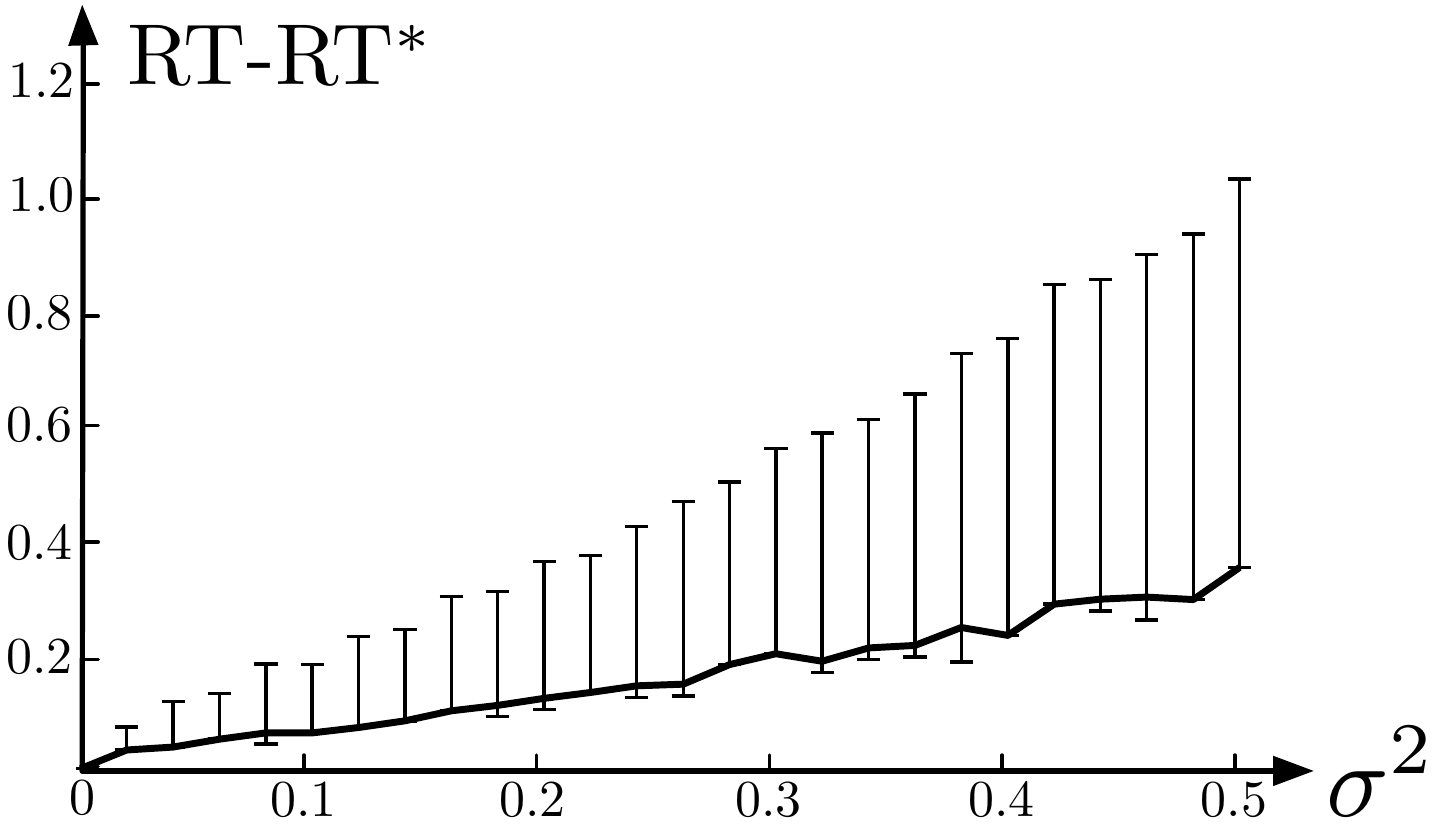}
          \label{refresh_error}}
        \subfigure[]{
          \includegraphics[width=0.3\columnwidth]{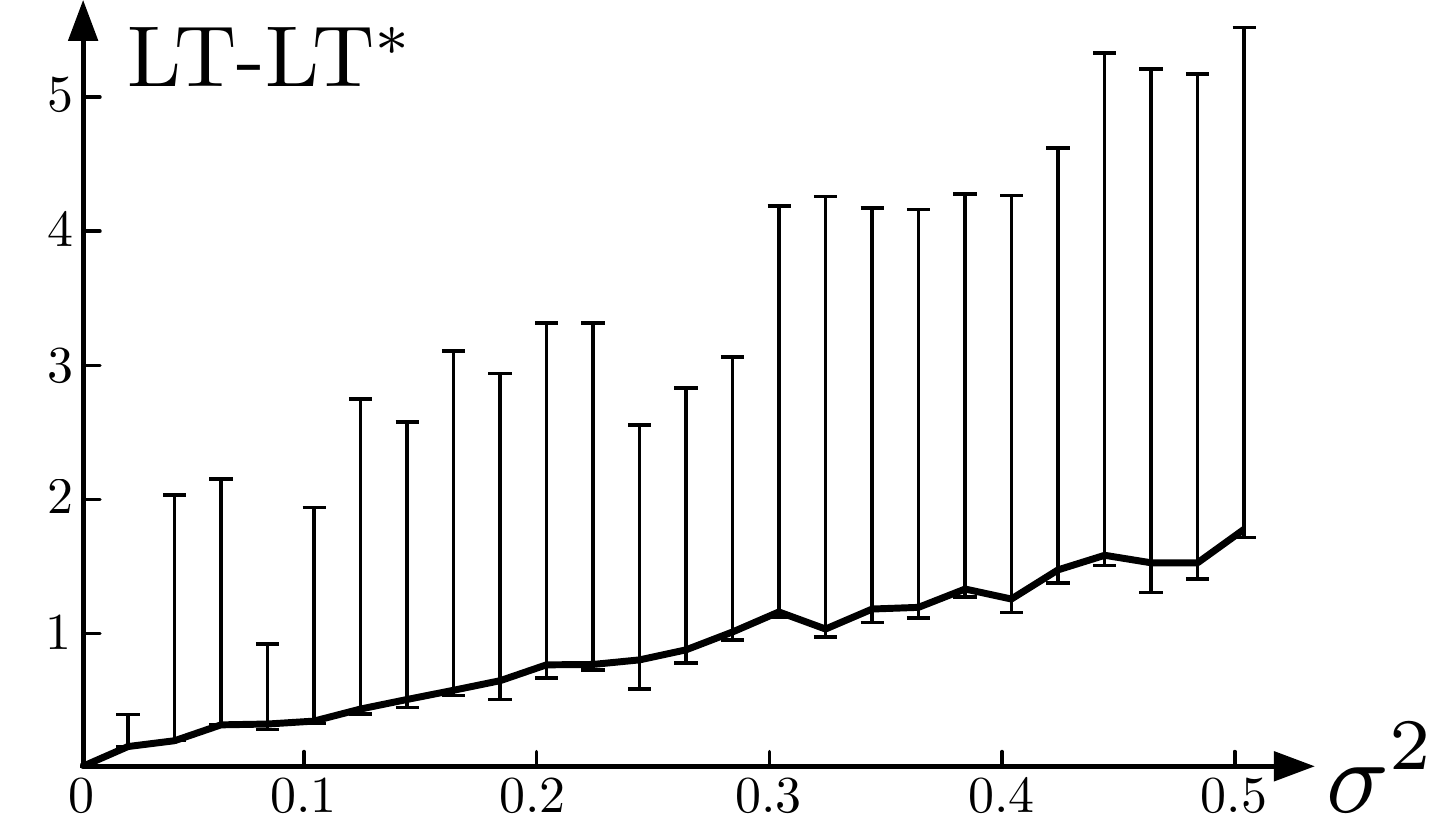}
          \label{latency_error}}
        \caption{For the roadmap in Fig. \ref{fig:simulation}, $100$
          team trajectories have been generated with Algorithm
          \ref{algo:synchro} for each value of $\sigma^2$. The average
          refresh time and the average latency are plotted as
          continuous lines in Fig. \ref{refresh_error} and
          Fig. \ref{latency_error} respectively. The bars indicate the
          minimum and the maximum value of the refresh time and the
          latency, respectively. Notice that the average team
          trajectory performance degrade gracefully with the noise
          variance.}
       \label{fig:noise_sim}
\end{figure}

As third and final case study, we consider the situation in which a
robot definitively stops.\footnote{The case of additional robots
  joining the team is handled in a similar way.} The remaining robots
need to compute a new optimal partition and to synchronize in order to
guarantee an optimal patrolling of the environment. Notice that for
the computation of such a partition by Algorithm \ref{algo:partition}
the chain graph and the number of the robots is required. Suppose that
the failure of the robot $7$ is detected at time $620$ by the
well-behaving robots, and assume that each robot knows the chain
roadmap and the number of operative robots. Algorithm
\ref{algo:partition} and Algorithm \ref{algo:synchro} allow the team
to synchronize on a new team trajectory with minimum refresh time and
latency. Notice that the initial and the final partitions do not
coincide.

\section{Approximation algorithms and heuristics for general
  roadmaps}\label{sec:approx}
The problem of designing minimum refresh time and latency team
trajectories on a chain roadmap has been discussed. In this section
we consider the more general cases of tree and cyclic roadmap, we
characterize the computational complexity of determining optimal
trajectories, and we describe two approximation methods with
performance guarantees. The results we are going to present are
intended for a team of more than one robot. Indeed, if only one robot
is assigned to the patrolling task, then a minimum refresh time
trajectory follows from the computation of the shortest tour through
the viewpoints, for which efficient approximation algorithms already
exist \cite{SA:98}.

\subsection{Minimum refresh time team trajectory on a tree
  roadmap}\label{sec:tree_patrol}
Let $T=(V,E)$ denote an undirected, connected, and acyclic roadmap
(tree). Recall that a vertex path is a sequence of vertices such that
any pair of consecutive vertices in the sequence are adjacent. A tour
is a vertex path in which the start and end vertices coincide, and in
which every vertex of $T$ appears at least once in the sequence. A
depth-first tour of $T$ is a tour that visits the vertices $V$ in a
depth-first order \cite{DP:00}. Let DFT$(T)$ denote the length of a
depth first tour of $T$. Notice that the length of a depth-first tour
of a connected tree equals twice the sum of the length of the edges of
the tree, and that any depth-first tour is a shortest tour visiting
all the vertices. We now show that, for the case of tree roadmap, the
set of cyclic-based and partition-based trajectories described in
\cite{YC:04} does not contain, in general, a minimum refresh time
trajectory. Recall that in a cyclic-based strategy the robots travel
at maximum speed and equally spaced along a minimum length tour
visiting all the viewpoints. Consider the tree roadmap of
Fig. \ref{tree_1}, and suppose that two robots are assigned to the
patrolling task. Clearly, the minimum refresh time is $2\varepsilon$,
while the refresh time of a cyclic strategy equals
$1+\varepsilon$. Consider now the tree roadmap in
Fig. \ref{tree_equal}, where the edges have unit length, and assume
that two robots are in charge of the patrolling task. Observe that any
partition of cardinality $2$ contains a chain of length $2$, so that,
since only one robot is assigned to each cluster, the minimum refresh
time that can be obtained is $4$. Suppose, instead, that the robots
visit the vertices of the roadmap as specified in Table
\ref{table:visiting_order}, where $x(t)$ denotes the position of a
robot at time $t$. Since the refresh time of the proposed trajectory
is $3$, we conclude that neither the cyclic-based nor the
partition-based strategy may lead to a minimum refresh time team
trajectory on a tree roadmap.

\begin{table}[ht]
  \caption{}
  \centering
\begin{tabular}{ccccccccc}
\hline\hline
Robot & $x(0)$ & $x(1)$ & $x(2)$ & $x(3)$ & $x(4)$ & $x(5)$ & $x(6)$ &
$x(7)$\\
\hline
1 & $v_1$ & $v_2$ & $v_4$ & $v_2$ & $v_3$ & $v_2$ & $v_1$ & $\cdots$\\
2 & $v_2$ & $v_3$ & $v_2$ & $v_1$ & $v_2$ & $v_4$ & $v_2$ & $\cdots$\\
\hline
\end{tabular}
\label{table:visiting_order}
\end{table}

\begin{figure}
	\centering
	\subfigure[]{
          \includegraphics[width=0.15\columnwidth]{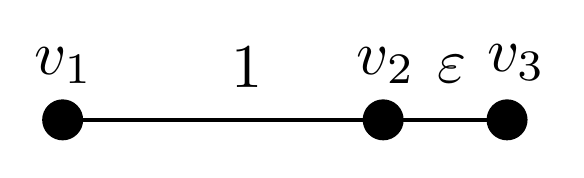}
          \label{tree_1}}
        \subfigure[]{
          \includegraphics[width=0.26\columnwidth]{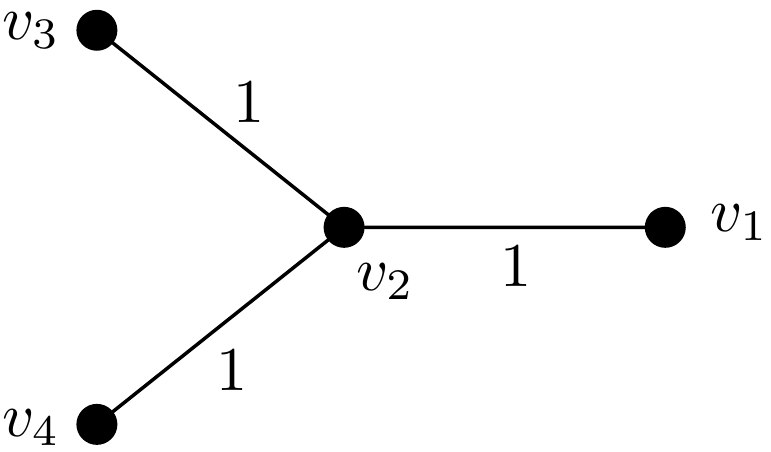}
          \label{tree_equal}}
        \caption{Two examples of tree roadmap. For the roadmap in
          Fig. \ref{tree_1} the family of cyclic based trajectories
          does not contain a minimum refresh time team
          trajectory. Instead, for the roadmap in Fig. \ref{tree_equal}
          the family of partitioned based trajectories does not
          contain a minimum refresh time team trajectory.}
\end{figure}

We now introduce some definitions. Let $X$ be a team trajectory on the
tree roadmap $T$.
We say that the edge $(v_j,v_z)\in E$ is \emph{used} by $X$ if there
exists $i\in \{1,\dots,m\}$ and $(t_1,t_2) \in [0, \text{RT}(X)]^2$ such
that $x_i(t_1) = v_j$ and $x_i(t_2) = v_z$, and it is \emph{unused}
otherwise. Note that, because in a tree there exists only one path
connecting two vertices, the above condition ensures that the edge
$(j,z)$ is traveled by the robot $i$. Let $\bar E$ denote the set of
unused edges, and let $F_T$ be the forest obtained from $T$ by
removing the edges $\bar E$ from $E$, i.e., the collection of
vertex-disjoint subtrees $\{T_1,\dots,T_k\}$, with $T_i=(V_i,E_i)$,
such that $V= \cup_{i=1}^k V_i$ and $E_i \subseteq E$, for each
$i\in\{1,\dots,k\}$. Let $m_i$ be the number of robots that visit at
least one vertex of $T_i$ in the interval $[0,\text{RT(X)}]$, and note
that $m_i > 0$ in a finite refresh time trajectory. Let
$M=\{m_1,\dots,m_k\}$. 
Notice that the same subtree collection can be associated with
different team trajectories. We say that a team trajectory is
\emph{efficient} if its refresh time is the smallest among all the
team trajectories associated with the same subtree collection.

\begin{theorem}[Efficient team trajectory]\label{eff_team}
  Let $(F_T,M)$ be the subtree collection associated with the team
  trajectory $X$ on the tree roadmap $T$, where
  $F_T=\{T_1,\dots,T_k\}$, and $M=\{m_1,\dots,m_k\}$. Then, $X$ is
  efficient if
    \begin{align*}
      \textup{RT}(X)=\max_{j \in \{1,\dots,k\}}
      \operatorname{DFT}(T_j)/m_j.
    \end{align*}
\end{theorem}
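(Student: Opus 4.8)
The plan is to show that the stated refresh time $\max_j \operatorname{DFT}(T_j)/m_j$ is simultaneously an achievable upper bound and a lower bound for any team trajectory associated with the subtree collection $(F_T, M)$. For the upper bound, I would construct an explicit trajectory: on each subtree $T_j$, the $m_j$ robots assigned to it perform the standard ``convoy'' strategy along a fixed depth-first tour of $T_j$, namely they travel the depth-first tour at unit speed, equally spaced in time by $\operatorname{DFT}(T_j)/m_j$. Since a depth-first tour is a closed walk visiting every vertex of $T_j$ and has length $\operatorname{DFT}(T_j)$, and since the $m_j$ robots are phase-shifted uniformly around this closed walk, every vertex of $T_j$ is visited by some robot every $\operatorname{DFT}(T_j)/m_j$ units of time. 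Taking the worst subtree gives refresh time exactly $\max_j \operatorname{DFT}(T_j)/m_j$, and this trajectory is manifestly associated with $(F_T,M)$: it uses exactly the edges in each $E_j$ and assigns exactly $m_j$ robots to $T_j$. Hence $\operatorname{RT}(X^\star) \le \max_j \operatorname{DFT}(T_j)/m_j$ for this particular $X^\star$.

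For the lower bound, I would argue that \emph{every} team trajectory $Y$ associated with the same subtree collection satisfies $\operatorname{RT}(Y) \ge \max_j \operatorname{DFT}(T_j)/m_j$. Fix the index $j$ achieving the maximum. By definition of the subtree collection, exactly $m_j$ robots visit vertices of $T_j$ during $[0,\operatorname{RT}(Y)]$, and — because $T_j$ is a connected component of the forest obtained by deleting the unused edges — none of these robots can leave $T_j$ through an unused edge within a refresh-time window; informally, to service $T_j$ they must remain within $T_j$ over each refresh period. Now observe that over any time window of length $\operatorname{RT}(Y)$, the union of the trajectory segments of these $m_j$ robots must cover all vertices of $T_j$ (otherwise some vertex goes unvisited for longer than $\operatorname{RT}(Y)$, contradicting the definition of refresh time). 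The total arc length these robots can traverse in time $\operatorname{RT}(Y)$ is at most $m_j \operatorname{RT}(Y)$, since each moves at unit speed. But any collection of walks in $T_j$ whose union covers all vertices of $T_j$ must have total length at least $\operatorname{DFT}(T_j)$ — indeed, the union of these walks contains a connected subgraph spanning $V_j$ (one would need to argue the walks can be stitched into a connected structure, using that the robots service a common connected region), and any connected spanning structure that is also ``closed'' in the sense of supporting periodic traversal has length at least $\operatorname{DFT}(T_j) = 2\sum_{e\in E_j} w(e)$, because every edge of the tree must be traversed and, in a tree, must be traversed in both directions. Combining, $m_j \operatorname{RT}(Y) \ge \operatorname{DFT}(T_j)$, i.e. $\operatorname{RT}(Y) \ge \operatorname{DFT}(T_j)/m_j = \max_j \operatorname{DFT}(T_j)/m_j$.

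Putting the two bounds together, the minimum refresh time over all trajectories associated with $(F_T,M)$ equals $\max_j \operatorname{DFT}(T_j)/m_j$, which is attained by $X^\star$; hence any trajectory $X$ with $\operatorname{RT}(X) = \max_j \operatorname{DFT}(T_j)/m_j$ is efficient, as claimed.

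The main obstacle I anticipate is the lower-bound argument, specifically making rigorous the claim that the $m_j$ robots servicing $T_j$ must, within one refresh period, cover $V_j$ with walks of total length at least $\operatorname{DFT}(T_j)$. The subtlety is twofold: (a) one must rule out a robot ``helping'' multiple subtrees within a single refresh window in a way that lets fractional robot-effort cover $T_j$ — this is where the definition of $m_j$ as the count of robots visiting $T_j$, together with the fact that $T_j$'s boundary edges are unused, does the work, but it needs care about what happens exactly at the cut vertices; and (b) one must justify the ``factor of two'' — that covering all of a tree's vertices by periodic walks costs twice the edge weight, not just the edge weight of a spanning structure — which follows because each edge, being a bridge in the tree, must be crossed an equal number of times in each direction by the union of periodic walks, but formalizing this may require a parity/flow-conservation argument on the multiset of edge traversals. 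I would handle (b) by a counting argument: assign to each robot its closed (periodic) walk, sum the signed edge-crossings, and use that the net flow across any tree edge over a period is zero, forcing at least two crossings per used edge per period.
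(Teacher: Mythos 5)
Your proposal follows essentially the paper's route: the published proof of this theorem consists precisely of your lower bound (the $m_j$ robots confined to $T_j$ must collectively travel at least $\operatorname{DFT}(T_j)$ to keep visiting all of its vertices, and their speed is bounded by $1$), while your achievability construction is exactly the paper's separate Lemma~\ref{lemma_team} and is not strictly needed for the ``$X$ is efficient if'' direction. One substantive caution, though: the intermediate claim in the body of your lower bound --- that any collection of walks covering all vertices of $T_j$ has total length at least $\operatorname{DFT}(T_j)$ --- is false as stated. A single \emph{open} spanning walk of a tree has length $2\sum_{e\in E_j}w(e)$ minus the length of a longest path, and with $m_j$ robots the covering walks can be shorter still, so a single refresh window only yields $\operatorname{RT}(Y)\ge \bigl(2\sum_{e}w(e)-\operatorname{diam}(T_j)\bigr)/m_j$. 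The factor of two genuinely requires the amortized argument you sketch under obstacle (b): each edge of $T_j$ is a bridge, so every window of length $\operatorname{RT}(Y)$ forces the robots to visit both sides of it, the resulting crossings must alternate in direction, and summing the (disjoint) time spent in the interiors of all edges over the horizon $[0,\subscr{T}{f}]$ with $\subscr{T}{f}\gg\operatorname{RT}(Y)$ gives $m_j\subscr{T}{f}\ge 2\sum_e w(e)\cdot\bigl(\subscr{T}{f}/\operatorname{RT}(Y)-O(1)\bigr)$, which is the claimed bound up to a vanishing correction. Since the trajectories here are not assumed periodic, your ``net flow zero over a period'' step should be replaced by ``net flow over $[0,\subscr{T}{f}]$ differs from zero by at most one crossing per robot per edge.'' The paper's own one-line proof elides exactly this point, so your identification of it as the main obstacle is on target; just be aware that the fix is the proof, not a footnote to it.
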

\begin{proof}
  Let $i\in \{1,\dots,k\}$, and let $m_i$ be the number of robots
  assigned to $T_i$. Notice that the robots in $T_i$ travel, in total,
  at least $\text{DFT}(T_i)$ to visit all the vertices. Since the
  speed of the robots is bounded by $1$, the smallest refresh time for
  the vertices of $T_i$ is $\text{DFT}(T_i)/m_i$.
\end{proof}

An efficient team trajectory, can be computed with the following
procedure. See Table \ref{table:visiting_order} for an example.
\begin{lemma}[Efficient team trajectory computation]\label{lemma_team}
  Let $(F_T,M)$ be a subtree collection of a tree roadmap, where
  $F_T=\{T_1,\dots,T_k\}$, and $M=\{m_1,\dots,m_k\}$. An efficient
  team trajectory is as follows: for each $i\in\{1,\dots,k\}$,
  \begin{enumerate}
    \item compute a depth-first tour $\tau_i$ of $T_i$,
    \item equally space $m_i$ robots along $\tau_i$, and
    \item move the robots clockwise at maximum speed on $\tau_i$.
   \end{enumerate}
\end{lemma}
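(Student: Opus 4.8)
The plan is to verify that the three-step construction indeed produces a feasible team trajectory whose refresh time matches the efficient bound $\max_j \operatorname{DFT}(T_j)/m_j$ from Theorem~\ref{eff_team}, and hence is efficient. First I would observe that the construction decouples across the subtrees $T_1,\dots,T_k$: since the $V_i$ are vertex-disjoint and the robots assigned to $T_i$ never leave $T_i$, it suffices to analyze a single subtree $T_i$ with its $m_i$ robots moving along the depth-first tour $\tau_i$. By the remarks preceding the theorem, $\operatorname{DFT}(T_i)$ equals twice the total edge length of $T_i$, the tour $\tau_i$ is closed, and every vertex of $T_i$ lies on $\tau_i$.

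Next I would analyze the single-tour dynamics. Parametrize $\tau_i$ by arc length as a closed curve of total length $L_i = \operatorname{DFT}(T_i)$, place the $m_i$ robots at arc-length positions $0, L_i/m_i, 2L_i/m_i, \dots$, and move them all at unit speed in the same direction. Then the configuration is rigid: the gap between consecutive robots along $\tau_i$ is always exactly $L_i/m_i$. Fix a vertex $v$ of $T_i$; it appears at one or more arc-length parameters along $\tau_i$. Between two consecutive visits of $v$ by \emph{any} robot, the relevant robot (or the next one in line) sweeps an arc of length at most $L_i/m_i$, which at unit speed takes time at most $L_i/m_i = \operatorname{DFT}(T_i)/m_i$. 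Taking the max over all vertices $v$ in $T_i$ and then over all $i$ shows $\operatorname{RT}(X) \le \max_{j} \operatorname{DFT}(T_j)/m_j$. Combined with the matching lower bound established in the proof of Theorem~\ref{eff_team} (each subtree needs total travel at least $\operatorname{DFT}(T_i)$, so with $m_i$ unit-speed robots the refresh time on $T_i$ is at least $\operatorname{DFT}(T_i)/m_i$), we get equality, so the trajectory is efficient.

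I would then check the remaining feasibility details. The trajectory must be continuous and piecewise differentiable on the roadmap: this holds because $\tau_i$ is a continuous closed path on $T_i$ traversed at constant unit speed, with only finitely many points of non-differentiability (the turning points at vertices). The speed bound is met with equality. One should also note the subtle point that a single vertex of $T_i$ may be visited multiple times per period of $\tau_i$ (depth-first tours revisit internal vertices); this only helps — it makes the inter-visit gaps smaller — so the bound $L_i/m_i$ on the gap is a conservative upper bound and the argument goes through unchanged. If $m_i = 1$ the construction reduces to a single robot cycling the depth-first tour, the familiar base case.

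The main obstacle I anticipate is purely a matter of careful bookkeeping rather than depth: making the claim ``the worst gap between consecutive visits of $v$ is at most $L_i/m_i$'' fully rigorous when $v$ occurs at several positions on $\tau_i$ and when $m_i$ does not divide $L_i$ evenly. The clean way is to argue on the cyclic arc-length coordinate: the set of times at which $v$ is visited is $\{t \ge 0 : (\sigma + t) \bmod L_i \in P_v \text{ for some initial offset } \sigma \in \{0,L_i/m_i,\dots\}\}$, where $P_v$ is the finite set of arc-length positions of $v$ on $\tau_i$; since the $m_i$ offsets are $L_i/m_i$-spaced and wrap around, consecutive visit times differ by at most $L_i/m_i$. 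This is elementary but is the one place where a sloppy argument could hide an error, so I would state it explicitly.
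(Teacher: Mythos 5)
Your proposal is correct and follows essentially the same route as the paper: the paper's proof likewise observes that every vertex appears on the depth-first tour and that $m_i$ equally spaced robots moving at maximum speed visit each vertex at least once every $\operatorname{DFT}(T_i)/m_i$, then invokes Theorem~\ref{eff_team} for efficiency. Your version simply makes explicit the cyclic arc-length bookkeeping and feasibility checks that the paper's one-line argument leaves implicit.
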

\begin{proof}
  Since every vertex of $T_i\in F_T$ appears at least once in a depth
  first tour $\tau_i$ of $T_i$, and the robots move with maximum speed
  and equally spaced along $\tau_i$, every vertex is visited at most
  every $\text{DFT}(T_i) /m_i$. The statement follows.
\end{proof}

Let $P(m)$ be the partition set of $m$, i.e., the set of all the
sequences of integers whose sum is $m$. The following problem is
useful to characterize the complexity of designing minimum refresh
time trajectories on a tree roadmap.


\begin{problem}[Optimal subtree collection]\label{prob:optimal_forest}
  Let $T$ be a tree roadmap and let $m$ be the number of robots. Find
  a subtree collection $(F_T,M)$ that minimizes $\max_{j\in
    \{1,\dots,|F_T|\} } \operatorname{DFT}(T_j)/m_j$ subject to $M\in
  P(m)$ and $|F_T|=|M|$.
\end{problem}

\begin{lemma}[Equivalent problem]\label{equiv_prob}
  For the case of a tree roadmap, the Team refresh time problem and
  the Optimal subtree collection problem are equivalent.
\end{lemma}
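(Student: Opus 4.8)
The plan is to establish the equivalence in the natural sense: the optimal values of the two problems coincide, and an optimal solution of either one can be turned into an optimal solution of the other in polynomial time. Concretely, writing $\textup{RT}^\ast$ for the minimum refresh time over all team trajectories on $T$, and $c^\ast = \min_{(F_T,M)} \max_{j}\operatorname{DFT}(T_j)/m_j$ for the optimum of Problem~\ref{prob:optimal_forest} (the minimum being over feasible pairs, i.e.\ $M\in P(m)$ and $|F_T|=|M|$, a finite set so the minimum is attained), I would prove $\textup{RT}^\ast = c^\ast$ by a two-sided inequality, exhibiting the conversion maps along the way.

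For $\textup{RT}^\ast \le c^\ast$: take any feasible subtree collection $(F_T,M)$ and feed it to Lemma~\ref{lemma_team}. This produces a team trajectory $X$ that assigns $m_j$ robots to a depth-first tour of $T_j$ for each $j$; since $\sum_j m_j = m$ this is a valid $m$-robot trajectory, and by the proof of Lemma~\ref{lemma_team} together with Theorem~\ref{eff_team} it has $\textup{RT}(X) = \max_j \operatorname{DFT}(T_j)/m_j$. Applying this to a minimizing $(F_T,M)$ gives $\textup{RT}^\ast \le c^\ast$.

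For $\textup{RT}^\ast \ge c^\ast$: let $X^\ast$ be a minimum refresh time team trajectory; it exists and has finite refresh time (a single trajectory sweeping a depth-first tour of all of $T$ already witnesses finiteness), and we may assume without loss of generality that every robot visits at least one vertex, replacing an idle robot's trajectory by a copy of another robot's if needed. Associate with $X^\ast$ its subtree collection $(F_T,M)$, $F_T=\{T_1,\dots,T_k\}$, $M=\{m_1,\dots,m_k\}$, exactly as defined before Theorem~\ref{eff_team}. The step I expect to be the main obstacle is verifying that this $(F_T,M)$ is \emph{feasible} for Problem~\ref{prob:optimal_forest}, i.e.\ that $M\in P(m)$: finiteness of $\textup{RT}(X^\ast)$ forces every $T_j$ to be visited, hence $m_j\ge 1$; and, crucially, no robot can visit vertices in two different components, because $T$ is a tree, so the unique path between such vertices would contain an inter-component edge, which lies in $\bar E$ and is therefore \emph{unused} — yet a continuous trajectory joining the two vertices traverses that edge and would mark it used, a contradiction. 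Hence the robots split according to $M$ and $\sum_j m_j = m$, so $M\in P(m)$ and $|F_T|=|M|$. Now the lower-bound reasoning inside the proof of Theorem~\ref{eff_team} (the $m_j$ robots confined to $T_j$ must jointly cover $T_j$, so at unit speed the refresh time of $T_j$'s vertices is at least $\operatorname{DFT}(T_j)/m_j$) yields $\textup{RT}^\ast = \textup{RT}(X^\ast) \ge \max_j \operatorname{DFT}(T_j)/m_j \ge c^\ast$. Combining the two inequalities gives $\textup{RT}^\ast = c^\ast$, and the two constructions — Lemma~\ref{lemma_team} in one direction, the subtree-collection assignment above in the other — convert optimal solutions back and forth, which is precisely the claimed equivalence of Problem~\ref{prob:minRT} and Problem~\ref{prob:optimal_forest} on tree roadmaps.
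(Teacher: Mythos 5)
Your proof is correct and follows essentially the same route as the paper: the upper bound via the construction of Lemma~\ref{lemma_team}, the lower bound via the counting argument of Theorem~\ref{eff_team} applied to the subtree collection of an optimal trajectory, and the identification of unused edges to convert solutions in the reverse direction. You are in fact more careful than the paper's two-sentence proof, notably in verifying that the extracted collection is feasible ($M\in P(m)$, no robot straddling two components because an inter-component edge would then be used), a point the paper leaves implicit.
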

\begin{proof}
  As a consequence of Theorem \ref{eff_team}, the minimum refresh time
  on a tree roadmap $T$ can be written as $\min_{(F_T,M)} \max_{j \in
    \{1,\dots,k\}} \text{DFT}(T_j)/m_j$, where $(F,M)$ is a subtree
  collection of $T$, and $|M|=|F_T|=k\le m$. It follows that a
  solution to Problem \ref{prob:minRT} can be derived in polynomial
  time from a solution to Problem \ref{prob:optimal_forest} by using
  the procedure described in Lemma \ref{lemma_team}. Suppose now we
  have a solution to Problem \ref{prob:minRT}. Then an optimal subtree
  collection follows from the identification of the unused edges. We
  conclude that the two optimization problems are equivalent.
\end{proof}

We now state our main result on the design of minimum refresh time
team trajectory on a tree roadmap.

\begin{theorem}[Computing a minimum refresh time team trajectory
    on a tree]\label{complexity}
  Let $T$ be a tree roadmap with $n$ vertices, and let $m$ be the
  number of robots. A minimum refresh time team trajectory on $T$ can
  be computed in $O((m-1)! n)$ time.
\end{theorem}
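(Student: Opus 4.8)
The plan is to reduce the problem to the combinatorial Optimal subtree collection problem and then to solve the latter with a dynamic program over the tree, run once for each way of splitting the $m$ robots among the subtrees. By Lemma~\ref{equiv_prob} the Team refresh time problem on $T$ is equivalent to Problem~\ref{prob:optimal_forest}, and the equivalence is constructive in both directions: from an optimal subtree collection $(F_T,M)$ one obtains a minimum refresh time team trajectory in $O(n)$ time via the depth-first tours of Lemma~\ref{lemma_team}, and that trajectory is optimal by Theorem~\ref{eff_team}; conversely an optimal trajectory induces an optimal $(F_T,M)$ through its set of used/unused edges. Hence it suffices to compute, within $O((m-1)!\,n)$ time, a pair $(F_T,M)$ attaining $\min_{(F_T,M)}\max_{j}\operatorname{DFT}(T_j)/m_j$ subject to $M\in P(m)$ and $|F_T|=|M|$. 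Since $F_T$ partitions the vertex set into connected subtrees, $|M|\le m$, so $M$ ranges over the (at most $2^{m-1}$) sequences in $P(m)$ with at most $m$ positive parts; I would iterate over these profiles and keep the best outcome.

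For a fixed profile $M=\{m_1,\dots,m_k\}$, I would run one bottom-up pass of a dynamic program on $T$ rooted at an arbitrary vertex. Processing vertices in post-order, at a vertex $v$ we maintain a table indexed by the sub-multiset of parts of $M$ already committed to the subtrees lying wholly inside the subtree rooted at $v$ (equivalently, by the number of robots already spent); the corresponding entry stores the least attainable value of $\max_j\operatorname{DFT}(T_j)/m_j$ over those closed subtrees, together with the least possible $\operatorname{DFT}$ of the still-open subtree that contains $v$. Examining the edge from $v$ to a child $c$ offers two moves: \emph{cut} it, which closes $c$'s open subtree and charges it one of the not-yet-used parts of $M$, updating the running maximum; or \emph{keep} it, which merges $c$'s open subtree and its accumulated $\operatorname{DFT}$ (hence its contribution to the eventual $\operatorname{DFT}$ of $v$'s subtree) into $v$'s. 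At the root, the last open subtree is closed and receives the remaining part of $M$; scanning the root tables over all profiles returns $\text{RT}^*$, and back-tracking the recorded choices recovers an optimal $(F_T,M)$ and, via Lemma~\ref{lemma_team}, an optimal team trajectory.

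For the running time, each of the at most $2^{m-1}$ profiles triggers a single DP pass that visits each of the $n$ vertices once and, at each of the $n-1$ edges, merges a child table into the parent's in $2^{O(m)}$ time (or merely $O(m)$ time if one argues that tracking only the \emph{count} of spent robots suffices). The product of the number of profiles and the per-profile cost is therefore $\operatorname{poly}(m)\,2^{O(m)}$, which is $O((m-1)!)$ because $(m-1)!$ grows like $e^{\Theta(m\log m)}$, dominating any $2^{O(m)}$; hence the total time is $O((m-1)!\,n)$.

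The step I expect to be the main obstacle is establishing correctness of the per-profile dynamic program. The delicate point is that the $\operatorname{DFT}$ values are arbitrary positive reals, so the table cannot be indexed by them; one indexes only by the finitely many robot-allocation states and carries a real-valued optimum, and must then prove the monotonicity that makes this lossless — that for a fixed allocation state a smaller $\operatorname{DFT}$ of the open subtree never hurts any later extension, and a smaller running maximum over the closed subtrees is always preferable — so that keeping a single Pareto-best entry per state is sound. Getting this argument right, together with the bookkeeping of which parts of $M$ remain available (the point at which the per-profile cost is pinned down as $O(2^m n)$ or $O(mn)$), is where the real work lies; once it is in place, the complexity accounting above is routine.
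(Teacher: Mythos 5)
Your overall architecture coincides with the paper's: reduce to the Optimal subtree collection problem via Lemma~\ref{equiv_prob}, and recover a trajectory in $O(n)$ time from an optimal $(F_T,M)$ via Lemma~\ref{lemma_team} and Theorem~\ref{eff_team}. The difference is that the paper does not construct an algorithm for the subtree collection problem at all --- its entire proof is a citation of \cite{HN-KO:04} for the claim that an optimal subtree collection is computable in $O((m-1)!\,n)$ time. You instead try to supply that algorithm yourself with a per-profile dynamic program over the rooted tree, which is a more ambitious and genuinely different route.

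That dynamic program is where the gap lies, and it is a real one, not just an omitted routine verification. Your table entry at a state $(v,S)$ carries two real quantities: the running maximum $\mu$ of $\operatorname{DFT}(T_j)/m_j$ over the closed subtrees and the value $\delta=\operatorname{DFT}$ of the open subtree containing $v$, and you propose to keep ``a single Pareto-best entry per state.'' But these two objectives genuinely trade off against each other: two ways of cutting the edges below $v$ that commit the same sub-multiset $S$ can satisfy $\mu_1<\mu_2$ while $\delta_1>\delta_2$, and neither dominates, because the final objective is the maximum of $\mu$ and a later term that is nondecreasing in $\delta$. The coordinatewise monotonicity you state is true but does not induce a total order on partial solutions, so a single Pareto-best entry need not exist; to be lossless you would have to carry the entire Pareto frontier, whose size is governed by the number of distinct achievable values of $\delta$ (i.e., by which edges are retained in the open component), which is not bounded by any function of $m$ alone. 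The standard repair --- fix a threshold $\lambda$, run a feasibility DP that minimizes $\delta$ subject to every closed component satisfying $\operatorname{DFT}(T_j)/m_j\le\lambda$, and then search over candidate values of $\lambda$ --- restores correctness but changes the complexity accounting and is essentially the content of the result in \cite{HN-KO:04} that the paper invokes. As written, your argument does not establish the claimed $O((m-1)!\,n)$ bound.
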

\begin{proof}
  Recall from \cite{HN-KO:04} that an optimal subtree collection can
  be computed in $O((m-1)! n)$. Then, by using Lemma \ref{equiv_prob}
  and Lemma \ref{lemma_team}, the claimed statement follows.
\end{proof}

As a consequence of Theorem \ref{complexity}, the problem of designing
minimum refresh time team trajectories on a tree roadmap is
computationally \emph{easy} for any finite number of robots. In our
design procedure, we first compute an optimal subtree collection of
the given tree, and then we schedule the robots trajectory according
to Lemma \ref{lemma_team}.

\subsection{Minimum refresh time team trajectory on a cyclic
  roadmap}\label{sec:heurist}
In this section we propose two approximation methods for the Team
refresh time problem in the case of a cyclic, i.e., not acyclic,
roadmap. These solutions are obtained from a transformation of the
cyclic roadmap into an acyclic roadmap.

Let $G=(V,E)$, with $|V|=n$, be an undirected and connected
roadmap. Note that there exists an open tour $\tau$ with at most
$2n-4$ edges that visits all the vertices.\footnote{An open tour with
  at most $2n-4$ edges that visits all the vertices can be constructed
  starting from a leaf of a spanning tree of $G$.}
\begin{figure}[tb]
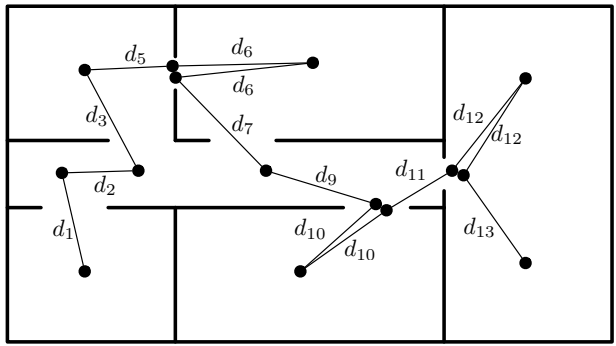

  \figCommChain
  \caption{The figure shows a chain roadmap associated with the cyclic
    roadmap of Fig. \ref{fig:CommGraph}. Notice that the cycles in
    Fig. \ref{fig:CommGraph} have been broken. Moreover, $3$ vertices
    ($3$ edges) of Fig. \ref{fig:CommGraph} are repeated twice in the
    chain.}\label{fig:treechain}
\end{figure}
We construct a chain roadmap $\Gamma$ from $\tau$ by doubling its
repeated vertices and edges, so that $\Gamma$ has at most $2n-3$
vertices and at most $2n-4$ edges, and such that the length of the
$i$-th edge of $\Gamma$ equals the length of the $i$-th edge of $\tau$
(cf. Fig. \ref{fig:treechain}). Our first approximation method
consists of applying Algorithm \ref{algo:synchro} to an optimal
$m$-partition of $\Gamma$.
\begin{theorem}[Performance ratio]\label{thm:perf_ratio}
  Let $G$ be a connected roadmap, let $n$ be the number of vertices of
  $G$, and let $\gamma$ be ratio of the longest to the shortest length
  of the edges of $G$. Let $\text{RT}^*$ be the minimum refresh time
  on $G$. Let $\tau$ be an open tour with $2n-4$ edges that visits all
  the $n$ vertices, and let $\Gamma$ be the chain roadmap associated
  with $\tau$. Let $\text{RT}_{\Gamma}^*$ be the minimum refresh time
  on $\Gamma$. Then
  \begin{align*}
    \text{RT}_{\Gamma}^* \le \left( \frac{n-2}{n} \right) 8 \gamma \text{RT}^*.
  \end{align*}
\end{theorem}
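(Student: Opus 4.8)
The plan is to sandwich $\text{RT}_\Gamma^*$: bound it from above by a quantity involving only the longest edge length of $G$, bound $\text{RT}^*$ from below by a quantity involving only the shortest edge length, and let $\gamma$ absorb the discrepancy. Write $e_{\max}$ and $e_{\min}$ for the longest and shortest edge lengths of $G$, so $\gamma = e_{\max}/e_{\min}$.

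\emph{Upper bound on $\text{RT}_\Gamma^*$.} The chain $\Gamma$ has at most $2n-4$ edges, each inherited from $G$ and hence of length at most $e_{\max}$, so its total length $L_\Gamma$ satisfies $L_\Gamma \le (2n-4)e_{\max}$; also $\Gamma$ has at least $n>m$ vertices, so the chain theory applies with $m$ robots. By Theorem~\ref{thm:min_refre_time}, $\text{RT}_\Gamma^* = 2\min_{\Pi_m}\operatorname{dim}(\Pi_m)$. I would show that the left-induced partition of $\Gamma$ of length $\rho = L_\Gamma/m$ has cardinality at most $m$: if it had an $(m{+}1)$-st cluster, its base point would lie at distance strictly greater than $m\rho = L_\Gamma$ from the start of the chain, which is impossible since that point is a viewpoint. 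Theorem~\ref{Thm:optimal_partition} then yields $\min_{\Pi_m}\operatorname{dim}(\Pi_m) \le L_\Gamma/m$, hence $\text{RT}_\Gamma^* \le 2L_\Gamma/m \le 4(n-2)e_{\max}/m$.

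\emph{Lower bound on $\text{RT}^*$.} Fix a minimum refresh time trajectory on $G$ (which we may take periodic) and a closed time window of length $\text{RT}^*$ in which every vertex is visited. The portion of robot $i$'s trajectory over the window is a walk of length at most $\text{RT}^*$; if this walk meets $k_i$ distinct vertices, the subgraph of edges it traverses is connected and spans those vertices, hence has at least $k_i-1$ edges and length at least $(k_i-1)e_{\min}$, so $k_i \le 1 + \text{RT}^*/e_{\min}$. Since the $m$ robots together cover all $n$ vertices, $n \le \sum_i k_i \le m(1 + \text{RT}^*/e_{\min})$, i.e. $\text{RT}^* \ge \frac{n-m}{m}e_{\min}$. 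Separately, since $n>m$ I would argue $\text{RT}^* \ge e_{\min}$: in a window shorter than $e_{\min}$ a robot cannot traverse a full edge, so (whether it starts at a vertex or in the interior of an edge) it meets at most one vertex, and then $m$ robots could not cover $n>m$ vertices. Combining, $\text{RT}^* \ge e_{\min}\max\{1,\tfrac{n-m}{m}\} \ge \tfrac{n}{2m}e_{\min}$, the last step by cases: $\tfrac{n-m}{m}\ge\tfrac{n}{2m}$ when $m\le n/2$, and $\tfrac{n}{2m}<1$ when $m>n/2$.

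Finally, from $\text{RT}^* \ge \tfrac{n}{2m}e_{\min}$ we get $\tfrac{2m}{n e_{\min}}\text{RT}^* \ge 1$, hence
$\text{RT}_\Gamma^* \le \tfrac{4(n-2)e_{\max}}{m} \le \tfrac{4(n-2)e_{\max}}{m}\cdot\tfrac{2m}{n e_{\min}}\,\text{RT}^* = \tfrac{8(n-2)}{n}\gamma\,\text{RT}^* = \tfrac{n-2}{n}\,8\gamma\,\text{RT}^*$, as claimed. I expect the main obstacle to be the lower bound on $\text{RT}^*$: the walk-counting estimate $\tfrac{n-m}{m}e_{\min}$ alone degrades to near zero as $m\to n$, so one must add the complementary bound $\text{RT}^* \ge e_{\min}$ and merge the two regimes; a secondary care point is making the windowing precise so that ``every vertex is visited in every window of length $\text{RT}^*$'' is literally true (e.g.\ by first passing to a periodic optimizer).
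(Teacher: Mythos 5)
Your proposal is correct and follows essentially the same route as the paper: upper-bound $\text{RT}_\Gamma^*$ by $4(n-2)e_{\max}/m$ via the total length of $\Gamma$, lower-bound $\text{RT}^*$ by $\frac{n}{2m}e_{\min}$ via a vertex-counting argument (the paper writes this as $\text{RT}^*\ge\lceil n/m-1\rceil e_{\min}\ge\frac{n}{2m}e_{\min}$, which hides exactly the same two-regime case split you make explicit), and take the ratio. You simply supply more detail than the paper does at both steps, e.g.\ the left-induced-partition justification of $\min_{\Pi_m}\operatorname{dim}(\Pi_m)\le L_\Gamma/m$ and the per-robot walk-counting bound.
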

\begin{proof}
  Let $\underline w$ be the shortest length of the edges of $G$, and
  note that the length of $\Gamma$ is upper bounded by
  $2(n-2)\gamma\underline w$. It follows that $\text{RT}_{\Gamma}^*
  \le \frac{4 (n-2)\gamma \underline w}{m}$. Since $m < n$ by
  assumption, some robots need to move along $G$ for all the
  viewpoints to be visited. Because each robot can visit only a vertex
  at a time, at least $\left\lceil \frac{n}{m} - 1 \right\rceil$ steps
  are needed to visit all the vertices of $G$, and therefore
  $\text{RT}^* \ge \left\lceil \frac{n}{m} - 1 \right\rceil \underline
  w \ge \frac{1}{2}\frac{n}{m} \underline w$.
  By taking the ratio of the two quantities the statement follows.
\end{proof}

It should be noticed that, when $\gamma$ grows, the performance of our
procedure might degrade. For instance, suppose that the roadmap is as
in Fig. \ref{fig:ratio_inf}, and suppose that $4$ robots are assigned
to the patrolling task. As long as $\varepsilon<1$, a minimum refresh
time strategy requires one robot to patrol the viewpoints
$\{v_1,v_2\}$, while the second, third, and fourth robot stay on the
viewpoints $v_3$, $v_4$, and $v_5$ respectively. It follows that
$\text{RT}^*=2\varepsilon$. On the other hand, an optimal
$m$-partition of any chain graph associated with a tour that visits
all the viewpoints has dimension at least $1$. Consequently, the
refresh time of the team trajectory obtained with Algorithm
\ref{algo:synchro} equals $2$, and the ratio $\text{RT}_{\Gamma}^* /
\text{RT}^*$ grows proportionally to $\varepsilon^{-1}$.
\begin{figure}[tb!]
  \centering
  \includegraphics[width=0.6\columnwidth]{./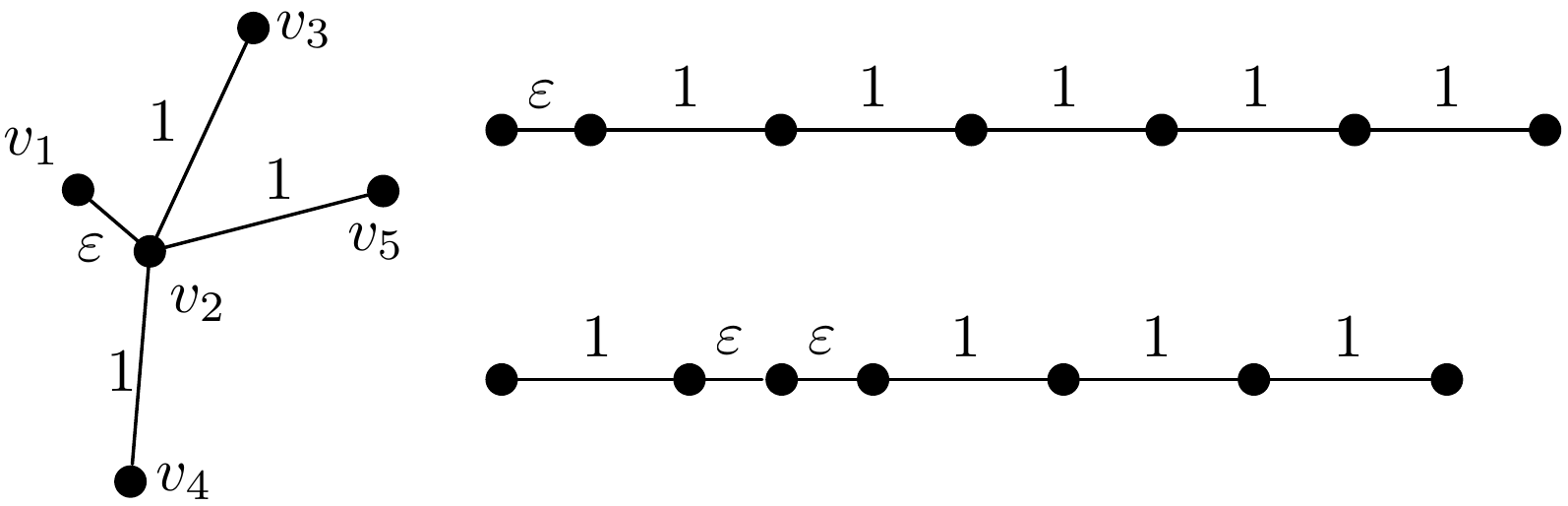}\\
  \caption{A tree roadmap and two corresponding chain roadmaps. If the
    number of robots is $4$, then the performance ratio
    $\text{RT}_{\Gamma}^* / \text{RT}^*$ grows with
    $\varepsilon^{-1}$.}\label{fig:ratio_inf}
\end{figure}

We next describe a polynomial time constant factor approximation
algorithm for the minimum refresh time problem. Given a roadmap
$G=(V,E)$ and a positive integer $k<|V|$, we define a path cover of
cardinality $k$ as the collection of paths $\{p_1,\dots,p_k\}$ such
that $V \subseteq \bigcup_{i=1}^k p_i$. Let the cost of a path equal
the sum of the lengths of its edges. The min-max path cover problem
asks for a minimum cost path cover for the input graph, where the cost
of a cover equals the maximum cost of a path in the cover. The
following result is known.
\begin{theorem}[Min-max path cover
  \cite{EMA-RH-AL:06}]\label{result_path_cover}
  There exists a $4$-approximation polynomial algorithm for the
  \textit{NP-hard} min-max path cover problem.
\end{theorem}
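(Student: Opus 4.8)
The plan is to establish this by the standard ``guess-and-decompose'' scheme for min-max routing problems, as done in \cite{EMA-RH-AL:06}; I sketch the main steps. First I would parametrize the search on the optimal value: let $B$ be a guessed upper bound on the minimum cost $B^*$ of a path in an optimal cover, and perform a binary search over the polynomially many values at which the decision procedure below changes behaviour. It then suffices to design, for every fixed $B \ge B^*$, a polynomial-time procedure that returns a path cover of cardinality at most $k$ whose every path has cost at most $4B$.

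Second, I would decompose the graph. Given $B$, delete every edge of weight strictly larger than $B$; since no path of cost at most $B$ can traverse such an edge, every connected component $C$ of the remaining graph must be covered by paths lying entirely inside $C$. Let $k_C$ be the number of such paths in a fixed optimal solution, so that $\sum_C k_C \le k$. Inside $C$ compute a minimum spanning tree $T_C$. The key estimate is $w(T_C) \le (2k_C-1)B$: the $k_C$ optimal paths restricted to $C$ already span $V(C)$ with total weight at most $k_C B$ and form at most $k_C$ connected pieces, and these pieces can be stitched into a single spanning subgraph of $C$ using at most $k_C-1$ further edges, each of weight at most $B$ because $C$ is connected through light edges.

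Third, I would turn each $T_C$ into paths. Take the Eulerian walk of the doubled tree $T_C$; it visits every vertex of $C$ and has weight $2\,w(T_C) \le (4k_C-2)B$, and shortcutting its repeated vertices via the triangle inequality turns it into a simple path $P_C$ of no larger weight. Then cut $P_C$ into consecutive sub-paths, starting a fresh sub-path whenever the accumulated weight plus the next edge would exceed the target bound; discarding a cut edge only decreases total weight, and a careful choice of the target yields at most $k_C$ sub-paths, each of weight at most $4B$. Taking all such sub-paths over all components $C$ produces a path cover of cardinality at most $\sum_C k_C \le k$ and cost at most $4B$; letting $B$ decrease to $B^*$ gives the claimed $4$-approximation, and every step (minimum spanning tree, Eulerian walk, splitting, binary search) runs in polynomial time.

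The hard part will be the bookkeeping in the last step: three sources of slack --- the $(2k_C-1)$ factor from reconnecting the optimal paths into $T_C$, the factor $2$ from Eulerian doubling, and the overshoot of the greedy cut --- must be balanced so that their combined effect is exactly $4$ rather than the factor close to $5$ that a naive combination gives. Obtaining the tight constant requires replacing the crude greedy split by a sharper decomposition lemma that selects the cut vertices so that no piece exceeds $4B$ while the number of pieces provably remains at most $k_C$; proving that lemma is the technical crux, and it is carried out in detail in \cite{EMA-RH-AL:06}.
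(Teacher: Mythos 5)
The paper does not prove this statement at all: it is imported verbatim from \cite{EMA-RH-AL:06} as a known external result (``The following result is known.''), so there is no internal proof to compare against. Your sketch reconstructs the standard guess-and-decompose strategy behind such min-max routing bounds, and the intermediate estimates you state are correct as far as they go: after deleting edges heavier than the guessed bound $B$, the spanning tree of a component $C$ has weight at most $(2k_C-1)B$, and Eulerian doubling plus shortcutting yields a covering path of weight at most $(4k_C-2)B$.

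The final step, however, is a genuine gap, and you correctly flag it yourself. Greedily splitting a path of weight $(4k_C-2)B$ whose edges have weight at most $B$ guarantees only that every closed piece (except the last) has weight greater than $3B$, hence produces up to roughly $\lceil (4k_C-2)/3\rceil$ pieces; this exceeds $k_C$ as soon as $k_C \ge 3$, so the naive combination does not deliver cardinality $k$ together with the bound $4B$. The ``sharper decomposition lemma'' you defer to \cite{EMA-RH-AL:06} is therefore not bookkeeping --- it is precisely the nontrivial content of the theorem, and the constant $4$ is exactly what remains unproven in your write-up. Since the paper itself treats the result as a black box, citing the reference for the crux is consistent with the paper's treatment, but you should present your argument as an outline of where the proof lives rather than as a proof.
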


Following Theorem \ref{result_path_cover}, given a graph $G$, there
exists a polynomial time algorithm that computes a path cover of $G$
with cost at most $4$ times greater than the cost of any path cover of
$G$. We now state our approximation result for the \textit{NP-hard}
Team refresh time problem.
\begin{lemma}[$8$-approximation refresh time]\label{4_approx}
  There exists an $8$-approximation polynomial algorithm for the
  \textit{NP-hard} Team refresh time problem.
\end{lemma}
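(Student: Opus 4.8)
The plan is to reduce the \emph{Team refresh time} problem to the min-max path cover problem of Theorem~\ref{result_path_cover}, at the cost of one extra factor of $2$ coming from the fact that a robot that patrols a path (rather than a cycle) must sweep it back and forth. Write $C^*$ for the cost of an optimal min-max $m$-path cover of the roadmap (using shortest-path distances, so that the input is a metric). The two things to establish are: (i) our algorithm produces a team trajectory with refresh time at most $2\cdot 4\, C^*$, and (ii) $C^* \le \operatorname{RT}^*$. Together these give an $8$-approximation.

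For (i), I would run the $4$-approximation min-max path cover algorithm with cardinality $k=m$, obtaining a cover $\{p_1,\dots,p_m\}$ with $\max_i \operatorname{cost}(p_i) \le 4\, C^*$. Realize each $p_i$ as a walk $w_i$ in the roadmap $G$ of the same length (if the cover is computed on the shortest-path metric, replace each edge by a shortest path between its endpoints; on $G$ itself no conversion is needed), assign robot $i$ to $w_i$, and let robot $i$ traverse $w_i$ forward and then backward at unit speed, periodically. This is a valid team trajectory: it is continuous, piecewise-differentiable, stays on the roadmap, respects the unit speed bound, and is $2\max_i\operatorname{cost}(p_i)$-periodic. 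Since the $p_i$ cover $V$, every viewpoint lies on some $w_i$ and hence is visited at least once per half-period, so the resulting refresh time equals $2\max_i\operatorname{cost}(p_i)\le 8\, C^*$.

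For (ii), let $X^*$ be a minimum refresh time team trajectory and pick $t_0$ with $[t_0,t_0+\operatorname{RT}^*]\subseteq[0,\subscr{T}{f}]$. By definition of refresh time, every viewpoint is visited within this window; assign each $v\in V$ to the lowest-indexed robot that visits $v$ during $[t_0,t_0+\operatorname{RT}^*]$, and call $V_i$ the resulting set for robot $i$, so that $V_1,\dots,V_m$ partition $V$. The restriction of $x_i$ to $[t_0,t_0+\operatorname{RT}^*]$ is a walk in $G$ of length at most $\operatorname{RT}^*$ that touches every vertex of $V_i$; listing the vertices of $V_i$ in the order of their first visit and joining consecutive ones yields a path of cost at most $\operatorname{RT}^*$ (each hop is no longer than the corresponding sub-walk, and the sub-walks are pairwise disjoint). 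These paths form an $m$-path cover of cost at most $\operatorname{RT}^*$, so $C^*\le \operatorname{RT}^*$. Chaining with (i) gives a polynomial-time algorithm returning a team trajectory of refresh time at most $8\,\operatorname{RT}^*$.

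I expect the delicate point to be the lower bound (ii): one must make sure that a bounded-length walk visiting a vertex set can be shortcut, via the triangle inequality, to a path of no larger cost covering the same set — which is why it is convenient to work in the shortest-path metric of $G$ — and one must handle robots that visit few or no new vertices (a single vertex is a zero-cost path, and an empty $V_i$ can simply be dropped), so that at most $m$ paths are used and the reduction to the min-max $m$-path cover is legitimate.
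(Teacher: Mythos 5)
Your proposal is correct and follows essentially the same route as the paper: compute a $4$-approximate min-max path cover with $m$ paths, bound the optimal cover cost by $\operatorname{RT}^*$ using the restriction of an optimal team trajectory to a window of length $\operatorname{RT}^*$, and pay a factor of $2$ by having each robot sweep its path back and forth. The paper's proof states these steps tersely; you have merely filled in the details (the shortcutting via the triangle inequality and the handling of degenerate clusters) that the paper leaves implicit.
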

\begin{proof}
  Let $\{p_1,\dots,p_m\}$ be a $4$-approximation path cover of the
  graph $G$. Note that the length of each path is within
  $4\text{RT}^*$. Indeed, in a minimum refresh time team trajectory
  starting at time $0$ and with unitary velocity, every vertex is
  visited within time $\text{RT}^*$. Let $X$ be a team trajectory
  obtained by letting robot $i$ sweep at maximum speed the path
  $p_i$. Clearly, $\text{RT}(X) \le 8 \text{RT}^*$. Because of Theorem
  \ref{result_path_cover}, the team trajectory $X$ can be computed in
  polynomial time.
\end{proof}

\begin{figure}
    \centering
    \includegraphics[width=.5\columnwidth]{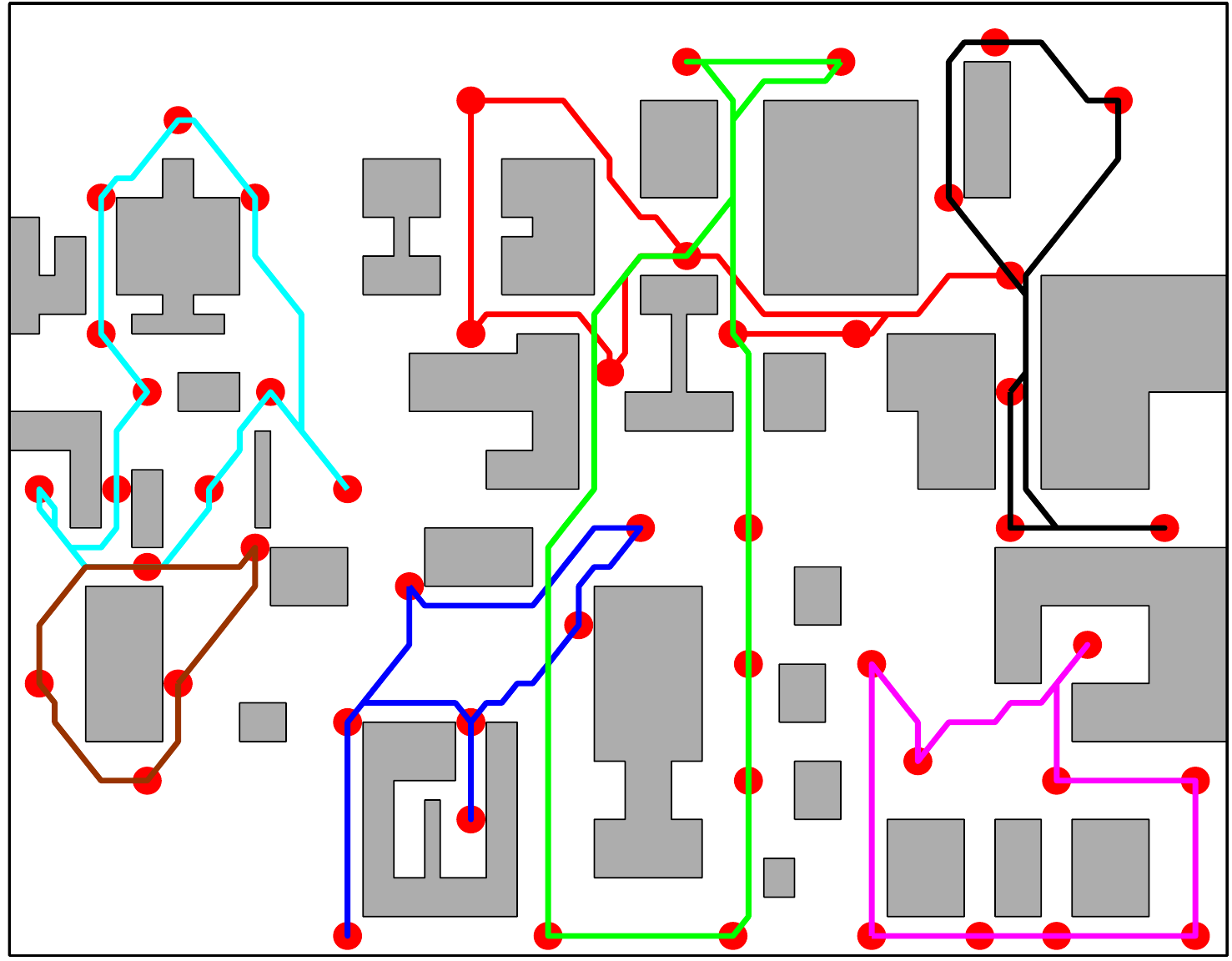}
    \caption{The picture shows the trajectories of $7$ robots for the
      patrolling of a part of the UCSB campus. The viewpoints (red
      circles) are chosen so as to provide sensor coverage of the
      whole area. For the design of the patrolling trajectory, a
      roadmap is first constructed as described in Section
      \ref{sec:prob_set}. Then, a path covering of cardinality $7$ is
      computed with the procedure in \cite{EMA-RH-AL:06}, and each
      robot is assigned a different path. Finally, the trajectory of
      each robot consists in sweeping at maximum speed the tour
      obtained by shortcutting the assigned path. The refresh time of
      the proposed team trajectory is proven to be within a factor of
      $8$ of the minimum refresh time for the given roadmap.}
    \label{fig:path_cover}
\end{figure}

Following Lemma \ref{4_approx}, for any given roadmap and any number
of robots, a team trajectory with refresh time within a factor of $8$
of the optimal refresh time can be constructed by computing a path
covering of the roadmap, and by assigning a different path to each
robot. An example is in Fig. \ref{fig:path_cover}, a movie of which is
included in the multimedia material.

\begin{remark}[Improving the team trajectory]
  Several existing heuristics can be used to improve upon the
  trajectories in Fig. \ref{fig:path_cover}. For instance, since the
  robots move in a metric space, shortcutting techniques may be
  applied \cite{VVVa:01}. Because these heuristics do not guarantee an
  improvement of the optimality gap of our trajectories, they are not
  considered in this work, and they are left as the subject of future
  investigation.
\end{remark}



\section{Conclusion and future work}\label{sec:future_work}
The design of team trajectories to cooperatively patrol an environment
has been discussed. After defining the problem and the performance
criteria, we analyze the computational complexity of the design
problem as a function of the shape of the environment to be
patrolled. For the case of a chain environment, we describe a
polynomial algorithm to compute minimum refresh time and latency team
trajectories. For the case of a tree environment, under the technical
assumption of a constant number of robots, we identify a polynomial
time algorithm to compute minimum refresh time team
trajectories. Finally, the general case of cyclic environment is shown
to be \textit{NP-hard}, and two approximation algorithms with
performance guarantees have been proposed.

Interesting aspects requiring further investigation include a
throughout study of the latency optimization problem for cyclic
roadmaps, the development of more efficient approximation algorithms,
and the introduction of a more general communication framework, in
which the robots are allowed to communicate while traveling the edges
of the roadmap. The study of average performance criteria and the
extension to dynamically changing environments are also of
interest. Finally, an hardware implementation of our algorithms would
further strengthen our theoretical findings.

\bibliographystyle{IEEEtran}

\end{document}